\documentclass{amsart}

\usepackage{amssymb}
\usepackage{amsmath}
\usepackage{amsfonts,mathrsfs}
 \usepackage{url}
\newtheorem{theorem}{Theorem}

\theoremstyle{plain}

\newtheorem{corollary}{Corollary}

\newtheorem{definition}{Definition}
\newtheorem{example}{Example}

\newtheorem{proposition}{Proposition}

\numberwithin{equation}{section}

\subjclass[2020]{ 54D10, 54D15, 54A05, 11B05.}

\title{A Generalization of the Lebesgue Density Theorem via Modulus Density}
\author{}
\date{}

\begin{document}
\vspace*{-3cm}
\maketitle


\begin{center}
{\large H. S. Behmanush\textsuperscript{1,$\ast$} \qquad
M. K\"u\c{c}\"ukaslan\textsuperscript{1,\dag }\par}

\vspace{0.5em}

\textsuperscript{1}Mersin University, Dept.\ of Mathematics, Mersin, Turkey\par

\vspace{0.3em}

\textsuperscript{$\ast$} h.s.behmanush1989@gmail.com \par
\textsuperscript{\dag}mkkaslan@gmail.com, \textsuperscript{\dag}mkucukaslan@mersin.edu.tr
\end{center}

\vspace{1em}
\maketitle

\begin{abstract}
In this paper, we introduce the notion of a $\gamma$-density point for Lebesgue-measurable subsets of $\mathbb{R}$, where $\gamma$ is a modulus function, and study its basic measure-theoretic properties. We show that every $\gamma$-density point is a Lebesgue density point, while under Condition~(A) the two notions coincide. Consequently, for such modulus functions, the set of $\gamma$-density points of a measurable set differs from the set itself only by a null set, yielding a modulus version of the Lebesgue Density Theorem. We then define the associated $\gamma$-density topology $\tau_\gamma$ and investigate its structure. In general, $\tau_\gamma$ is contained in the classical Lebesgue density topology, and if $\gamma$ satisfies Condition~(A), then $\tau_\gamma=\tau_d$. We also compare $\tau_\gamma$ with $\psi$-density topologies and establish several topological properties of $\tau_\gamma$, including that countable sets are $\tau_\gamma$-closed and that $(\mathbb{R},\tau_\gamma)$ is nonseparable, nonregular, and nonmetrizable. Finally, we introduce $\gamma$-approximately continuous functions, prove that they form a vector space, and show that the bounded class of such functions is a Banach space under the supremum norm. 
\end{abstract}

\maketitle
  Keywords: Lebesgue measure, density point, Lebesgue density theorem, modulus density point

\section{Introduction}
\label{Sec:1}
In this paper, we introduce a modulus-based extension of the classical notion of density for Lebesgue measurable subsets of $\mathbb{R}$. More precisely, for a measurable set $A\subset \mathbb{R}$ and a modulus function $\gamma$, we define the notion of a modulus density point of $A$ and investigate its measure-theoretic, topological, and functional consequences. This leads naturally to the associated modulus density topology, viewed as a generalization of the classical Lebesgue density topology, and to a corresponding notion of modulus-approximate continuity.

The concepts of density points and approximately continuous functions are fundamental in real analysis and measure theory, and were developed extensively during the twentieth century. Haupt and Pauc first introduced the density topology in 1952 and 1954 \cite{ hp1, hp2}. It was later rediscovered by Goffman and Waterman in 1961 \cite{cgcntnd}.  

A Lebesgue measurable set $A\subset \mathbb{R}$ is said to have density $d$ at a point $x$ if the following limit exists and is equal to $d$:
$$\lim_{\alpha \to 0}\frac{|A\cap (x-\alpha,x+\alpha)|}{|2\alpha|}.$$
The point $x$ is called a Lebesgue density point of $A$ if and only if $d=1$. 

In \cite{jcom}, J. C. Oxtoby presented the classical Lebesgue Density Theorem, which asserts that for every measurable set $A$, the symmetric difference between $A$ and the set of its density points is a null set.

The notation $\Phi(A)$ stands for the set of all points in $\mathbb{R}$ at which $A$ has density $1$. Let $\mathcal{L}$ denote the family of all Lebesgue measurable subsets of $\mathbb{R}$. Using the operator $\Phi: \mathcal{L} \to \mathcal{L}$, one defines the topology
$$\tau_d=\{A\subset \mathbb{R}: A\subset \Phi(A)\}.$$
It is well known that the Lebesgue density topology is strictly finer than the usual Euclidean topology on $\mathbb{R}$, and therefore provides a more delicate topological structure \cite{wh}.

In recent years, several authors have constructed new topologies by means of analogous operator techniques. In 2013, J. Hejduk and R. Wiertelak introduced abstract density topologies generated by lower and almost lower density operators and studied their principal properties \cite{hwro}. Later, in 2025, J. Hejduk and P. Nowakowski introduced the strong generalized topology generated by the porosity operator \cite{hp}. Earlier, in 2023, J. Hejduk, M. K\"u\c c\"ukaslan and A. Loranty defined the $\langle s\rangle$-generalized topology \cite{hma}. More recently, in 2024, J. Hejduk and A. Loranty investigated a class of generalized topologies satisfying all separation axioms \cite{ha}.

Among density-type operators, $\psi$-density plays an important role. In 1999, M. Terepeta and E. Wagner-Bojakowska introduced the concept of the $\psi$-density topology in analogy with the classical density topology \cite{mepsi}. This idea originates in observations from Taylor's notes. In \cite{sjto}, S. J. Taylor examined whether the Lebesgue Density Theorem can be strengthened. More precisely, if $A$ is a Lebesgue measurable set, then
$$\lim _{x\in I,|I|\to 0}\frac{|A^c \cap I|}{|I|}=0$$
holds for all $m\in A$, except possibly on a set of Lebesgue measure zero. Under the stronger condition
$$\lim _{x\in I,|I|\to 0}\frac{|A^c \cap I|}{|I|\psi(|I|)}=0,$$
where $\psi  \in \mathcal{C}$, and $\mathcal{C}$ denotes the class of all functions defined on the positive reals that are continuous, non-decreasing, and tend to zero at the origin, Taylor showed that the classical theorem is, in a certain sense, optimal: such an improvement cannot hold uniformly for every measurable set, although for each measurable set there exists a suitable function $\psi$ for which the stronger relation remains valid almost everywhere on $A$. 

In \cite{mepsi}, M. Terepeta and E. Wagner-Bojakowska restricted attention to intervals centered at $x$ and defined a point $x$ to be a $\psi$-density point of a set $A$ if and only if
$$\lim_{\alpha \to 0}\frac{|A^c \cap (x-\alpha,x+\alpha)|}{|2\alpha|.\psi(2\alpha)}=0.$$
They denoted the set of all $\psi$-density points of a measurable set $A$ by $\Phi_{\psi}(A)$. The family
$$\tau_{\psi}:=\{A\subset \mathbb{R}: A\subset \Phi_{\psi}(A)\}$$
is called the $\psi$-density topology, and it is finer than the Lebesgue density topology.

Two $\psi$-density topologies generated by functions $\psi_1$ and $\psi_2$ were shown to be equivalent under necessary and sufficient conditions by E. Wagner-Bojakowska and W. Wilczynski in \cite{wewc}. Since then, both $\psi$-density points and $\psi$-density topologies have been studied extensively. In 2000, results concerning the interior of a set in the $\psi$-density topology were obtained in \cite{wewt}. In 2006, M. Filipczak and M. Terepeta investigated the natural, density, and $\psi$-density topologies on $\mathbb{R}$ in connection with classes of continuous functions, comparing them with Baire 1, Baire $^*$1, and Darboux functions \cite{fmmto}. In 2009, M. Terepet studied several aspects of $\psi$-continuity in \cite{tma}, while A. Gozdziewicz-Smejda and E. Lazarow examined comparisons of $\psi$-sparse topologies in \cite{gawc}. Later, E. Lazarowa and A. Vizvaryb introduced the notions of $\psi_{I}$-density point and $\psi_{I}$-density topology, defined analogously to the classical $I$-density topology on the real line \cite{teavpsi}.

These developments motivate the following program. First, can a modulus function $\gamma$ generate a natural notion of density point for Lebesgue measurable subsets of $\mathbb{R}$ that extends the classical concept? Second, under what assumptions does this generalized notion yield an analogue of the Lebesgue Density Theorem, and how does the induced topology compare with the classical density topology and with $\psi$-density topologies? Third, can this framework be used to define a corresponding notion of modulus-approximate continuity, and what algebraic and functional-analytic properties do the resulting classes of functions possess?

The aim of this paper is to answer these questions. In Section 1, we introduce modulus density points and establish their basic measure-theoretic properties. In particular, we show that every modulus density point is a Lebesgue density point, and that under a natural additional assumption on the modulus function the two notions coincide, which leads to a modulus version of the Lebesgue Density Theorem. In Section 2, we define the corresponding modulus density topology and investigate its fundamental properties, including its relation to the Lebesgue density topology and to $\psi$-density topologies. In Section 3, we introduce modulus-approximately continuous functions and study the linear and functional-analytic structure of the associated function spaces.

Throughout this article, the symbol $|.|$ denotes the Lebesgue measure of a set. We write $X \sim Y$ to mean that the sets $X$ and $Y$ differ only by a null set; that is, $|X \Delta Y| = 0$, where $\Delta$ denotes the symmetric difference.

\begin{definition}\cite{AP}
    A function $\gamma : \mathbb{R}^+:=[0,\infty)\to [0,\infty)$ is said to be a modulus function if
$(i)$ $\gamma(a)=0 \Leftrightarrow a=0,$
    
$(ii)$ $\gamma(a+b)\leq \gamma(a)+\gamma (b)$ for all $ a,b\in \mathbb{R}^+,$
        
$(iii)$ $\gamma$ is right-continuous at $0$,  
       
$(iv)$ $\gamma$ is increasing.
\end{definition}
 
From the property $(ii)$, it follows that if $a\geq b,$ then $\gamma(a)-\gamma (b)\leq \gamma(a-b)$ holds for any modulus function in the set  $\mathcal{M}$.
The set of all modulus functions in this work will be  denoted by $\mathcal{M}$.
The functions $\gamma(x)=\frac{x}{1+x}$  and $\gamma(x)=x^p$ where $0 < p < 1$, etc. are examples of modulus functions.
\begin{definition}
    Let $A\in \mathcal{L}$ and $\gamma \in \mathcal{M}$, the right (or left) $\gamma$-density of $A$ at a point $x$ is defined as $$d_{\gamma}^+(A,x)=\lim_{\alpha \to 0^+}\frac{\gamma(|(x,x+\alpha)\cap A|)}{\gamma(\alpha)},$$ $$ d_{\gamma}^-(A,x)=\lim_{\alpha \to 0^+}\frac{\gamma(|(x-\alpha,x)\cap A|)}{\gamma(\alpha)}).$$
    Similarly,  $\gamma$-density of $A$ at a point $x$ is defined as $$d_{\gamma}(A,x)=\lim_{\alpha \to 0^+}\frac{\gamma(|(x-\alpha,x+\alpha)\cap A|)}{\gamma(2\alpha)}.$$
   
    A point $x$ is said to be $\gamma$-density point(or modulus density point) of $A$  if $d_{\gamma}(A^c,x)=0$ and 
     said to be a right (or left) $\gamma$-density point of set $A$ or a right (or left) modulus density point of $A$ if $d_{\gamma}^+(A^c,x)=0$ $ 
     ( d_{\gamma}^-(A^c,x)=0).$
     \end{definition}
     
   For $A\in \mathcal{L}$ and $\gamma \in \mathcal{M}$, the symbols $\mathcal{D}_{\gamma}(A)$,  $\mathcal{D}_{\gamma}^+(A)$($\mathcal{D}_{\gamma}^-(A)$) represent  the set of modulus density points and the set of right (left) $\gamma$-density points of $A$, respectively. 
   Taking $\gamma(x)=x$, we have $\mathcal{D}_{\gamma}(A)=\Phi_d(A)$
    where $\Phi_d(A)$ denotes the set of Lebesgue density points of set A.
\begin{definition}
A point $x$ is called $\gamma$-dispersion (modulus-dispersion) point of set $A$ if $x$ is a modulus-density point of $A^c$. That is
    $$\lim_{\alpha \to 0}\frac{\gamma(|(x-\alpha,x+\alpha)\cap A|)}{\gamma(2\alpha)}=0.$$
\end{definition}

\begin{theorem}
A point $x$ is a modulus density point of a set $A\in \mathcal{L}$ for any $\gamma \in \mathcal{M}$, if and only if it is a left and right modulus density point of the set $A$.
 \end{theorem}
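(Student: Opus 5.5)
Write $B=A^c$ and, for $\alpha>0$, set
$$L(\alpha)=|(x-\alpha,x)\cap B|,\qquad R(\alpha)=|(x,x+\alpha)\cap B|.$$
Since a point has measure zero, $|(x-\alpha,x+\alpha)\cap B|=L(\alpha)+R(\alpha)$. The plan is to compare the three ratios
$$\frac{\gamma(L(\alpha)+R(\alpha))}{\gamma(2\alpha)},\qquad \frac{\gamma(L(\alpha))}{\gamma(\alpha)},\qquad \frac{\gamma(R(\alpha))}{\gamma(\alpha)}.$$
We use only two properties of $\gamma\in\mathcal{M}$: monotonicity (iv) and subadditivity (ii). Subadditivity gives in particular $\gamma(2\alpha)\le 2\gamma(\alpha)$, and monotonicity gives $\gamma(\alpha)\le\gamma(2\alpha)$.

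\emph{Sufficiency.} Assume $x$ is both a left and a right modulus density point of $A$, i.e. $d_\gamma^-(B,x)=d_\gamma^+(B,x)=0$. By subadditivity and then $\gamma(\alpha)\le\gamma(2\alpha)$,
$$0\le \frac{\gamma(L(\alpha)+R(\alpha))}{\gamma(2\alpha)}\le \frac{\gamma(L(\alpha))+\gamma(R(\alpha))}{\gamma(2\alpha)}\le \frac{\gamma(L(\alpha))}{\gamma(\alpha)}+\frac{\gamma(R(\alpha))}{\gamma(\alpha)}.$$
The right-hand side tends to $0$ as $\alpha\to0^+$. Hence $d_\gamma(B,x)=0$, so $x$ is a modulus density point of $A$.

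\emph{Necessity.} Assume $d_\gamma(B,x)=0$. Since $R(\alpha)\le L(\alpha)+R(\alpha)$, monotonicity gives $\gamma(R(\alpha))\le\gamma(L(\alpha)+R(\alpha))$. Combining this with $\gamma(2\alpha)\le2\gamma(\alpha)$, i.e. $1/\gamma(\alpha)\le 2/\gamma(2\alpha)$, we get
$$\frac{\gamma(R(\alpha))}{\gamma(\alpha)}\le 2\,\frac{\gamma(L(\alpha)+R(\alpha))}{\gamma(2\alpha)}\longrightarrow 0 .$$
So $d_\gamma^+(B,x)=0$, and the identical argument with $L$ in place of $R$ gives $d_\gamma^-(B,x)=0$.

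All denominators are positive for $\alpha>0$ by property (i), and every ratio is nonnegative, so the squeeze arguments are legitimate. No step here is a genuine obstacle. The one place needing care is necessity, where the interval length changes from $2\alpha$ to $\alpha$; the factor $2$ supplied by subadditivity ($\gamma(2\alpha)\le 2\gamma(\alpha)$) absorbs this change, so no extra condition on $\gamma$ such as Condition~(A) is needed.
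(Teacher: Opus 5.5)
Your proof is correct and matches the paper's argument step for step. For sufficiency you use subadditivity followed by $\gamma(\alpha)\le\gamma(2\alpha)$, and for necessity you use monotonicity together with $\gamma(2\alpha)\le 2\gamma(\alpha)$; these are exactly the paper's two steps.
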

 
\begin{proof} 
Assume \(x\in \mathcal{D}_\gamma(A)\). Then
$$
\lim_{\alpha\to 0^+}
\frac{\gamma(|(x-\alpha,x+\alpha)\cap A^c|)}{\gamma(2\alpha)}
=0.
$$
Since
$$
|(x-\alpha,x)\cap A^c|
\leq
|(x-\alpha,x+\alpha)\cap A^c|,
$$
monotonicity gives
$$
\gamma(|(x-\alpha,x)\cap A^c|)
\leq
\gamma(|(x-\alpha,x+\alpha)\cap A^c|).
$$
Also, by subadditivity,
$$
\gamma(2\alpha)=\gamma(\alpha+\alpha)\leq 2\gamma(\alpha),
$$
hence
$$
\frac{1}{\gamma(\alpha)}\leq \frac{2}{\gamma(2\alpha)}.
$$
Therefore
$$
0\leq
\frac{\gamma(|(x-\alpha,x)\cap A^c|)}{\gamma(\alpha)}
\leq
2\frac{\gamma(|(x-\alpha,x+\alpha)\cap A^c|)}{\gamma(2\alpha)}
\to 0.
$$
The same argument works for the right side, so \(x\in \mathcal{D}_\gamma^{-}(A)\cap \mathcal{D}_\gamma^{+}(A)\).
To prove reverse, 
Since

$|(x-\alpha,x)\cap A^c|$ and $|(x,x+\alpha)\cap A^c|$ are disjoint
$$|(x-\alpha,x+\alpha)\cap A^c|=|(x-\alpha,x)\cap A^c|+|(x,x+\alpha)\cap A^c|$$

Now assume \(x\) is both a left and a right \(\gamma\)-density point of \(A\). By definition this means
$$\lim_{\alpha\to 0^+}\frac{\gamma(|(x-\alpha,x)\cap A^c|)}{\gamma(\alpha)}=0
\qquad \text{and} \qquad
\lim_{\alpha\to 0^+}\frac{\gamma(|(x,x+\alpha)\cap A^c|)}{\gamma(\alpha)}=0.$$

Using subadditivity of \(\gamma\),
$$\gamma(|(x-\alpha,x)\cap A^c|+|(x,x+\alpha)\cap A^c|)\le \gamma(|(x-\alpha,x)\cap A^c|)+\gamma(|(x,x+\alpha)\cap A^c|).$$

Hence
$$\frac{\gamma(|(x-\alpha,x+\alpha)\cap A^c|)}{\gamma(2\alpha)}
= \frac{\gamma(|(x-\alpha,x)\cap A^c|+|(x,x+\alpha)\cap A^c|)}{\gamma(2\alpha)}
$$ $$\le \frac{\gamma(|(x-\alpha,x)\cap A^c|)+\gamma(|(x,x+\alpha)\cap A^c|)}{\gamma(2\alpha)}.$$
Since \(\gamma\) is increasing and \(\alpha \leq 2\alpha\), we have

$$\gamma(\alpha)\leq \gamma(2\alpha),$$
so
$$\frac{\gamma(|(x-\alpha,x)\cap A^c|)+\gamma(|(x,x+\alpha)\cap A^c|)}{\gamma(2\alpha)}
\leq \frac{\gamma(|(x-\alpha,x)\cap A^c|)}{\gamma(\alpha)}+\frac{\gamma(|(x,x+\alpha)\cap A^c|)}{\gamma(\alpha)}.$$
Therefore
$$ 0\leq
\frac{\gamma(|(x-\alpha,x+\alpha)\cap A^c|)}{\gamma(2\alpha)} \leq \frac{\gamma(|(x-\alpha,x)\cap A^c|)}{\gamma(\alpha)}+\frac{\gamma(|(x,x+\alpha)\cap A^c|)}{\gamma(\alpha)}
\longrightarrow 0. $$
So $$\lim_{\alpha\to 0^+} \frac{\gamma(|(x-\alpha,x+\alpha)\cap A^c|)}{\gamma(2\alpha)}=0,$$
which means \(x\in \mathcal{D}_\gamma(A)\). Thus the converse holds.
\end{proof}

\begin{proposition}\label{pn1.5}
 If there exist $a,b>0$ and $\delta>0$ such that
$$ a \leq \frac{\gamma_1(t)}{\gamma_2(t)} \leq b $$
for $0<t<\delta$, then $$ D_{\gamma_1}(A)=D_{\gamma_2}(A) $$
for every $A\in \mathcal{L}$.
\end{proposition}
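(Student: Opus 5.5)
By symmetry in $\gamma_1,\gamma_2$ (the hypothesis gives $1/b\le \gamma_2(t)/\gamma_1(t)\le 1/a$ on $(0,\delta)$), it suffices to prove $D_{\gamma_2}(A)\subset D_{\gamma_1}(A)$ for every $A\in\mathcal{L}$. So fix $x\in D_{\gamma_2}(A)$, which means
$$\lim_{\alpha\to0^+}\frac{\gamma_2(|(x-\alpha,x+\alpha)\cap A^c|)}{\gamma_2(2\alpha)}=0.$$
We must show the same limit with $\gamma_1$ in place of $\gamma_2$.

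For $0<\alpha<\delta/2$ write $m(\alpha)=|(x-\alpha,x+\alpha)\cap A^c|$, so $0\le m(\alpha)\le 2\alpha<\delta$. If $m(\alpha)=0$, then by property $(i)$ of a modulus the numerator vanishes for both functions and there is nothing to estimate. If $m(\alpha)>0$, then $0<m(\alpha)<\delta$ and $0<2\alpha<\delta$, so the hypothesis applies at both arguments:
$$\gamma_1(m(\alpha))\le b\,\gamma_2(m(\alpha)),\qquad \gamma_1(2\alpha)\ge a\,\gamma_2(2\alpha).$$
Dividing, we get for all such $\alpha$
$$0\le \frac{\gamma_1(m(\alpha))}{\gamma_1(2\alpha)}\le \frac{b}{a}\cdot\frac{\gamma_2(m(\alpha))}{\gamma_2(2\alpha)},$$
and the right-hand side tends to $0$ as $\alpha\to0^+$. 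By squeezing, $x\in D_{\gamma_1}(A)$.

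There is no real obstacle here. The only points needing care are to restrict to $\alpha<\delta/2$, so that both $m(\alpha)$ and $2\alpha$ lie in $(0,\delta)$ where the two-sided bound holds, and to treat the case $m(\alpha)=0$ separately, since the hypothesis is stated only for $t>0$. Positivity of $\gamma_i(2\alpha)$, needed for the division, follows from property $(i)$.
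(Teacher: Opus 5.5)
Your proposal is correct and follows essentially the same argument as the paper. Both restrict to small $\alpha$, apply the two-sided bound to the numerator argument and to $2\alpha$, and compare the ratios up to the constant $b/a$. The only differences are cosmetic: you reduce to one inclusion by symmetry where the paper writes the two-sided sandwich at once, and you treat the case $m(\alpha)=0$ explicitly where the paper leaves it implicit.
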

\begin{proof}
For any measurable set $A,$  $0\leq |(x-\alpha,x+\alpha)\cap A^c| \leq 2\alpha$, so for small $\alpha$,
$$ a\gamma_2(|(x-\alpha,x+\alpha)\cap A^c|)\leq \gamma_1(|(x-\alpha,x+\alpha)\cap A^c|)\leq b\gamma_2(|(x-\alpha,x+\alpha)\cap A^c|), $$
and
$$ a\gamma_2(2\alpha)\leq \gamma_1(2\alpha)\leq b\gamma_2(2\alpha). $$
Hence
$$ \frac{a}{b}\frac{\gamma_2(|(x-\alpha,x+\alpha)\cap A^c|)}{\gamma_2(2\alpha)} \leq
\frac{\gamma_1(|(x-\alpha,x+\alpha)\cap A^c|)}{\gamma_1(2\alpha)} \leq
\frac{b}{a}\frac{\gamma_2(|(x-\alpha,x+\alpha)\cap A^c|)}{\gamma_2(2\alpha)}. $$
So one ratio tends to $0$ iff the other does.
\end{proof}
\begin{theorem}\label{thm6}
For any $\gamma\in \mathcal{M}$,
every modulus density point of a measurable set $A$ 
is also a Lebesgue density point of 
$A$.
\end{theorem}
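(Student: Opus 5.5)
The plan is to argue by contradiction. The only tools needed are monotonicity of $\gamma$ and the iterated form of subadditivity, $\gamma(nt)\le n\gamma(t)$ for every integer $n\ge 1$ and $t\ge 0$, which follows from property $(ii)$ by induction. Fix $x\in\mathcal{D}_\gamma(A)$ and write $m(\alpha)=|(x-\alpha,x+\alpha)\cap A^c|$, so that $0\le m(\alpha)\le 2\alpha$. The hypothesis is
$$\lim_{\alpha\to0^+}\frac{\gamma(m(\alpha))}{\gamma(2\alpha)}=0.$$
We must show that $m(\alpha)/(2\alpha)\to 0$, which is exactly the statement that $x$ is a Lebesgue density point of $A$.

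Suppose this fails. Then there exist $\varepsilon\in(0,1]$ and a sequence $\alpha_k\to0^+$ with $m(\alpha_k)\ge 2\varepsilon\alpha_k$ for all $k$. Choose an integer $n\ge 1/\varepsilon$, so that $2\alpha_k\le n\,m(\alpha_k)$. Since $\gamma$ is increasing and satisfies the iterated subadditivity bound,
$$\gamma(2\alpha_k)\le\gamma\bigl(n\,m(\alpha_k)\bigr)\le n\,\gamma\bigl(m(\alpha_k)\bigr).$$
By property $(i)$ we have $\gamma(2\alpha_k)>0$, so we may divide by it. This gives
$$\frac{\gamma(m(\alpha_k))}{\gamma(2\alpha_k)}\ge\frac1n>0\quad\text{for all }k,$$
which contradicts the hypothesis. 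Hence $x\in\Phi_d(A)$.

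The only delicate point is the step that converts a lower bound on the ratio $m/(2\alpha)$ into a lower bound on the $\gamma$-ratio. A modulus function can be very concave, so a direct comparison of $\gamma(m)$ with $m$ is unavailable. The integer-multiple form of subadditivity is what resolves this, and it needs only that $\varepsilon$ is fixed along the sequence. No continuity of $\gamma$ beyond the axioms is required. Right-continuity at $0$ is not even used here; it will matter instead for the converse direction under Condition~(A).
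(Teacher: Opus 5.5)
Your proof is correct and follows the same argument as the paper's: contradiction along a sequence $\alpha_k\to0^+$, an integer $N\ge 1/\varepsilon$, and the chain $\gamma(2\alpha_k)\le\gamma(N\,m(\alpha_k))\le N\,\gamma(m(\alpha_k))$; you also write out the concluding bound $\gamma(m(\alpha_k))/\gamma(2\alpha_k)\ge 1/N$, which the paper's written proof leaves off. Your closing aside is slightly off, though: the paper's proof of the converse under Condition~(A) uses only monotonicity and Condition~(A), not right-continuity at $0$.
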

\begin{proof}
Let $x$ be a modulus density point of $A$, so we have $\lim_{\alpha \to 0} \frac{\gamma(|(x-\alpha,x+\alpha)\cap A^c|)}{\gamma(2\alpha)}=0.$
To show Lebesgue density, it is enough to prove
$$ \frac{|(x-\alpha,x+\alpha)\cap A^c|}{2\alpha} \to 0.$$
Suppose not. Then there exist $\varepsilon>0$ and $\alpha_n \to 0^+$ such that
$$|(x-\alpha_n,x+\alpha_n)\cap A^c| \geq \varepsilon(2\alpha_n).$$
Choose $N\in \mathbb{N}$ with $N_\varepsilon \geq 1$. Then
$$ 2\alpha_n \leq N |(x-\alpha_n,x+\alpha_n)\cap A^c|. $$
By monotonicity and subadditivity,
$$ \gamma(2\alpha_n)\leq \gamma(N|(x-\alpha_n,x+\alpha_n)\cap A^c|)\leq N\gamma(|(x-\alpha_n,x+\alpha_n)\cap A^c|), $$
\end{proof}
It should be noted that the converse of the theorem is not valid in general. An illustrative example is provided to demonstrate this.
\begin{example} \label{ex1.7}
The converse of Theorem \ref{thm6} is false.
Define a function $\gamma:[0,\infty)\to[0,\infty)$ by
$$\gamma(0)=0,$$
and for $t>0$,
$$
\gamma(t)=
\begin{cases}
\dfrac{1}{\log \!\left(\dfrac{e}{t}\right)}, & 0<t\le e^{-1},\\[2mm]
\dfrac{e}{4}\,t+\dfrac14, & t>e^{-1}.
\end{cases}
$$
Then $\gamma$ is a modulus function. Indeed, it is increasing, right-continuous at $0$, and
$\gamma(0)=0$. Moreover, $\gamma$ is concave on $(0,\infty)$, and every increasing concave
function $\gamma$ with $\gamma(0)=0$ is subadditive; hence $\gamma\in M$.
Now let $$ t_n=2^{-n}\qquad (n\in\mathbb N), $$
and define
$$\delta_n=t_n^2-t_{n+1}^2. $$
For each $n\in\mathbb N$, let
$$I_n=\left(t_{n+1},\, t_{n+1}+\frac{\delta_n}{2}\right).$$
Since
$$\frac{\delta_n}{2}<t_n-t_{n+1},$$
the intervals $I_n$ are pairwise disjoint and $I_n\subset (t_{n+1},t_n)$.
Define
$$B^c=\bigcup_{n=1}^\infty (I_n\cup(-I_n)),
\qquad B=\mathbb R\setminus B^c .$$
We claim that $0$ is a Lebesgue density point of $B$, but $0\notin \mathcal{D}_\gamma(B)$.
Let
$$m(h)=|B^c\cap(-h,h)| \qquad (h>0). $$
Fix $k\in\mathbb N$ and assume that
$$t_{k+1}<h\le t_k . $$
Then all intervals $I_n$ with $n\ge k+1$ are contained in $(0,h)$, while at most the interval
$I_k$ contributes partially. Hence
$$\sum_{n=k+1}^\infty \delta_n \le m(h)\le \sum_{n=k}^\infty \delta_n .$$
Since the sums telescope,
$$\sum_{n=k+1}^\infty \delta_n=t_{k+1}^2, \qquad
\sum_{n=k}^\infty \delta_n=t_k^2. $$
Therefore
$$t_{k+1}^2\le m(h)\le t_k^2. $$
Because $t_{k+1}<h\le t_k=2t_{k+1}$, we get
$$\frac{h^2}{4}\le m(h)\le 4h^2$$
for all sufficiently small $h>0$.
It follows that
$$0\le \frac{m(h)}{2h}\le 2h \longrightarrow 0 \qquad (h\to0^+). $$
Hence $$ \lim_{h\to0^+}\frac{|B^c\cap(-h,h)|}{2h}=0, $$
so $0$ is a Lebesgue density point of $B$.
On the other hand, for all sufficiently small $h>0$ we have
$$
\frac{\gamma(h^2/4)}{\gamma(2h)}
\le
\frac{\gamma(m(h))}{\gamma(2h)}
\le
\frac{\gamma(4h^2)}{\gamma(2h)}.
$$
Since $\gamma(t)=1/\log(e/t)$ near $0$, for every fixed $c>0$,
$$
\frac{\gamma(ch^2)}{\gamma(2h)}
=
\frac{\log\!\left(\dfrac{e}{2h}\right)}
     {\log\!\left(\dfrac{e}{ch^2}\right)}
\longrightarrow \frac12
\qquad (h\to0^+).
$$
Therefore, by the squeeze theorem,
$$ \lim_{h\to0^+}\frac{\gamma(|B^c\cap(-h,h)|)}{\gamma(2h)}=\frac12\neq 0. $$
Thus $0$ is not a $\gamma$-density point of $B$.

Consequently, the converse of Theorem 1.6 does not hold in general.
\end{example}

\begin{definition}
 \textbf{Condition (A).}
A modulus function $\gamma$ is said to satisfy Condition (A) if for every $\varepsilon>0$
there exist $c_\varepsilon\in(0,1)$ and $\delta_\varepsilon>0$ such that
$$ \frac{\gamma(c_\varepsilon t)}{\gamma(t)}<\varepsilon
\qquad\text{for all }0<t<\delta_\varepsilon. $$

\end{definition}
\begin{theorem}\label{thm3}
Let $A\in \mathcal{L}$ and let $\gamma\in M$ satisfy condition (A). Then  $\Phi(A)\subset\mathcal{D}_\gamma(A)$ holds.
\end{theorem}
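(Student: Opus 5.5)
Fix $x\in\Phi(A)$ and $\varepsilon>0$; I want to show that $\gamma(|(x-\alpha,x+\alpha)\cap A^c|)/\gamma(2\alpha)<\varepsilon$ for all sufficiently small $\alpha>0$. The idea is to use Lebesgue density to make the measure of $A^c$ in the interval a small fraction of the interval's length. Monotonicity of $\gamma$ then moves this comparison inside $\gamma$, and Condition (A) turns a small fraction of the length into a small fraction of the $\gamma$-value.

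First, apply Condition (A) to $\varepsilon$. This gives $c_\varepsilon\in(0,1)$ and $\delta_\varepsilon>0$ with $\gamma(c_\varepsilon t)<\varepsilon\,\gamma(t)$ for $0<t<\delta_\varepsilon$. Next, since $x$ is a Lebesgue density point of $A$,
$$\frac{|(x-\alpha,x+\alpha)\cap A^c|}{2\alpha}\to 0 .$$
So there is $\eta>0$ such that for $0<\alpha<\eta$ we have $|(x-\alpha,x+\alpha)\cap A^c|\le c_\varepsilon\cdot 2\alpha$.

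Now take $0<\alpha<\min(\eta,\delta_\varepsilon/2)$ and put $t=2\alpha<\delta_\varepsilon$. Because $\gamma$ is increasing,
$$\gamma(|(x-\alpha,x+\alpha)\cap A^c|)\le \gamma(c_\varepsilon\, 2\alpha)<\varepsilon\,\gamma(2\alpha).$$
Here $\gamma(2\alpha)>0$ by property (i) of a modulus function, so we may divide by it. This gives $0\le \gamma(|(x-\alpha,x+\alpha)\cap A^c|)/\gamma(2\alpha)<\varepsilon$ for all small $\alpha$. Since $\varepsilon$ was arbitrary, $d_\gamma(A^c,x)=0$, that is, $x\in\mathcal{D}_\gamma(A)$.

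The only delicate point is the order of the choices. The constant $c_\varepsilon$ from Condition (A) has to be fixed first, and only then is the Lebesgue density hypothesis used with tolerance $c_\varepsilon$; the two smallness thresholds $\eta$ and $\delta_\varepsilon/2$ are then combined by taking their minimum. Beyond this no real obstacle is expected. Combined with Theorem~\ref{thm6}, the argument yields $\Phi(A)=\mathcal{D}_\gamma(A)$ under Condition (A).
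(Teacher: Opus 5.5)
Your proof is correct and follows the paper's argument essentially step for step. In both, $c_\varepsilon$ and $\delta_\varepsilon$ from Condition (A) are fixed first, Lebesgue density is then applied with tolerance $c_\varepsilon$, and monotonicity of $\gamma$ gives $\gamma(|(x-\alpha,x+\alpha)\cap A^c|)\le\gamma(c_\varepsilon\cdot 2\alpha)<\varepsilon\,\gamma(2\alpha)$ for $2\alpha<\delta_\varepsilon$; you also make explicit that $\gamma(2\alpha)>0$, which the paper leaves implicit.
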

\begin{proof}
\textit{Proof.}
Let $x\in \Phi(A)$. Then $x$ is a Lebesgue density point of $A$, so for every $\eta>0$
there exists $\delta_1>0$ such that
$$\frac{|(x-\alpha,x+\alpha)\cap A^c|}{2\alpha}<\eta
\qquad\text{for all }0<\alpha<\delta_1.$$
We want to prove that
$$\lim_{\alpha\to 0^+}
\frac{\gamma(|(x-\alpha,x+\alpha)\cap A^c|)}{\gamma(2\alpha)}=0.$$
Let $\varepsilon>0$ be arbitrary. Since $\gamma$ satisfies Condition (A), there exist
$c\in(0,1)$ and $\delta_2>0$ such that
$$\gamma(ct)<\varepsilon\,\gamma(t) \qquad\text{for all }0<t<\delta_2.$$
Now choose $\delta>0$ such that
$$ 0<\alpha<\delta \quad\Longrightarrow\quad 2\alpha<\delta_2$$
and also
$$\frac{|(x-\alpha,x+\alpha)\cap A^c|}{2\alpha}<c.$$
Then for every $0<\alpha<\delta$ we have
$$|(x-\alpha,x+\alpha)\cap A^c|<2c\alpha=c(2\alpha).$$
Since $\gamma$ is increasing,
$$\gamma(|(x-\alpha,x+\alpha)\cap A^c|) \le \gamma(c(2\alpha)).$$
Hence $$\frac{\gamma(|(x-\alpha,x+\alpha)\cap A^c|)}{\gamma(2\alpha)} \le \frac{\gamma(c(2\alpha))}{\gamma(2\alpha)}
<\varepsilon.$$
Since $\varepsilon>0$ was arbitrary, it follows that
$$\lim_{\alpha\to 0^+} \frac{\gamma(|(x-\alpha,x+\alpha)\cap A^c|)}{\gamma(2\alpha)}=0.$$
Therefore $x\in \mathcal{D}_\gamma(A)$, and so $\Phi(A)\subset \mathcal{D}_\gamma(A)$.

\end{proof}

\begin{theorem}\label{m3}
Let $A\in\mathcal L$ and let $\gamma\in M$ satisfy condition (A). Then
$$\mathcal{D}_\gamma(A)=\Phi(A).$$
Consequently,
$$|A\Delta \mathcal{D}_\gamma(A)|=0.$$
\end{theorem}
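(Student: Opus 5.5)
The equality $\mathcal{D}_\gamma(A)=\Phi(A)$ follows by combining the two inclusions already proved. Theorem \ref{thm6} holds for every $\gamma\in\mathcal M$ and gives $\mathcal{D}_\gamma(A)\subset\Phi(A)$. Theorem \ref{thm3} uses Condition (A) and gives $\Phi(A)\subset\mathcal{D}_\gamma(A)$. Together they yield $\mathcal{D}_\gamma(A)=\Phi(A)$ for every $A\in\mathcal L$.

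The proof of Theorem \ref{thm6} as written stops before its conclusion, so I will supply the final step. Suppose, for contradiction, that $x\in\mathcal{D}_\gamma(A)\setminus\Phi(A)$. Then there are $\varepsilon>0$ and $\alpha_n\to0^+$ with $|(x-\alpha_n,x+\alpha_n)\cap A^c|\ge \varepsilon\cdot 2\alpha_n$. Choose an integer $N\ge 1/\varepsilon$. Monotonicity and iterated subadditivity give
$$\gamma(2\alpha_n)\le N\,\gamma(|(x-\alpha_n,x+\alpha_n)\cap A^c|).$$
Hence
$$\frac{\gamma(|(x-\alpha_n,x+\alpha_n)\cap A^c|)}{\gamma(2\alpha_n)}\ge \frac1N$$
for all $n$. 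This contradicts the assumption that this ratio tends to $0$. The argument uses only that $\gamma(2\alpha_n)>0$, which holds by property (i), and no continuity of $\gamma$ away from $0$.

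For the consequence $|A\Delta\mathcal{D}_\gamma(A)|=0$, I substitute $\mathcal{D}_\gamma(A)=\Phi(A)$ and invoke the classical Lebesgue Density Theorem (as in Oxtoby \cite{jcom}), which states $|A\Delta\Phi(A)|=0$ for every measurable $A$. I should also check that $\mathcal{D}_\gamma(A)$ is measurable so the statement makes sense. This is immediate, since it equals $\Phi(A)$, which is measurable because $A\mapsto\Phi(A)$ maps $\mathcal L$ into $\mathcal L$.

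I expect no real obstacle. The only point needing care is closing the gap in the proof of Theorem \ref{thm6} via the integer $N$ argument above; everything else is a direct combination of results stated earlier.
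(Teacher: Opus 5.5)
Your proposal is correct and matches the paper's proof: you combine Theorem~\ref{thm6} and Theorem~\ref{thm3} to get the two inclusions, then invoke the classical Lebesgue Density Theorem for $|A\Delta\mathcal{D}_\gamma(A)|=0$. Your completion of the truncated proof of Theorem~\ref{thm6} is the intended missing step and is valid. There you choose an integer $N\ge 1/\varepsilon$ and use iterated subadditivity to bound the ratio below by $1/N$, relying on $\gamma(2\alpha_n)>0$.
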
 
 \begin{proof}   
By Theorem \ref{thm6}, every $\gamma$-density point of $A$ is a Lebesgue density point of $A$. Hence
$$\mathcal{D}_\gamma(A)\subset \Phi(A).$$
On the other hand, by Theorem \ref{thm3}, since $\gamma$ satisfies Condition (A), every Lebesgue density point of $A$ is a $\gamma$-density point of $A$. Therefore
$$\Phi(A)\subset \mathcal{D}_\gamma(A).$$
Thus,
$$\mathcal{D}_\gamma(A)=\Phi(A).$$
Now, by the Lebesgue Density Theorem,
$$|A\Delta \Phi(A)|=0.$$
Since $\mathcal{D}_\gamma(A)=\Phi(A)$, it follows that
$$|A\Delta \mathcal{D}_\gamma(A)|=0.$$

\end{proof}
\begin{proposition}\label{pn1.11}
Let $A \in L$, let $\gamma \in \mathcal{M}$, and let $x \in \mathbb{R}$. Then the following statements are equivalent:
\begin{enumerate}
\item $
\lim_{\alpha \to 0^+}
\frac{\gamma(|(x-\alpha,x+\alpha)\cap A^c|)}{\gamma(2\alpha)}=0.$
\item
$\lim_{n \to \infty}
\frac{\gamma(|(x-\frac1n,x+\frac1n)\cap A^c|)}{\gamma(2/n)}=0.$
\end{enumerate}
\end{proposition}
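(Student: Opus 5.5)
The implication (1)$\Rightarrow$(2) is immediate: restrict $\alpha$ to the sequence $\alpha=1/n\to0^+$. The substance is (2)$\Rightarrow$(1). The plan is a sandwich argument: for each small $\alpha$ compare the window $(x-\alpha,x+\alpha)$ with the two neighbouring windows of radius $1/(n+1)$ and $1/n$. Throughout, write
\[
f(\alpha)=|(x-\alpha,x+\alpha)\cap A^c|,
\]
which is nondecreasing in $\alpha$.

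Fix $\alpha\in(0,1)$ and let $n=n(\alpha)$ be the integer with $\frac{1}{n+1}<\alpha\le\frac1n$. As $\alpha\to0^+$ we have $n\to\infty$. By monotonicity of $f$ and of $\gamma$,
\[
\gamma(f(\alpha))\le\gamma(f(1/n)), \qquad \gamma(2\alpha)\ge\gamma\!\left(\tfrac{2}{n+1}\right).
\]
Therefore
\[
\frac{\gamma(f(\alpha))}{\gamma(2\alpha)}\le \frac{\gamma(f(1/n))}{\gamma(2/n)}\cdot\frac{\gamma(2/n)}{\gamma(2/(n+1))}.
\]

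The main step is to bound the last factor uniformly. Since $\frac2n\le 2\cdot\frac{2}{n+1}$, monotonicity and subadditivity (exactly as in the proof of the first theorem, $\gamma(2t)\le2\gamma(t)$) give
\[
\gamma\!\left(\tfrac2n\right)\le\gamma\!\left(2\cdot\tfrac{2}{n+1}\right)\le 2\,\gamma\!\left(\tfrac{2}{n+1}\right).
\]
So the ratio $\gamma(2/n)/\gamma(2/(n+1))$ is at most $2$, and hence
\[
0\le\frac{\gamma(f(\alpha))}{\gamma(2\alpha)}\le 2\,\frac{\gamma(f(1/n(\alpha)))}{\gamma(2/n(\alpha))}.
\]
Condition (i) of the definition, $\gamma(t)>0$ for $t>0$, guarantees that none of these denominators vanish.

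Given $\varepsilon>0$, choose $N$ from (2) so that the sequence term is below $\varepsilon/2$ for all $n\ge N$. Then every $\alpha<1/N$ has $n(\alpha)\ge N$, so the ratio above is less than $\varepsilon$, which proves (1).

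The only real obstacle is controlling $\gamma(2/n)/\gamma(2/(n+1))$ without any continuity assumption on $\gamma$ away from $0$. The doubling inequality coming from subadditivity handles this, so the argument needs nothing beyond properties (i), (ii) and (iv) of a modulus function.
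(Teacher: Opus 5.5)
Your proposal is correct and follows essentially the same route as the paper: trap $\alpha$ between $\frac{1}{n+1}$ and $\frac1n$, and bound $\gamma(2/n)/\gamma(2/(n+1))$ by $2$ using subadditivity. The only cosmetic difference is how the factor $2$ is obtained: the paper splits $\frac2n=\frac{2}{n+1}+\frac{2}{n(n+1)}$, while you use $\frac2n\le 2\cdot\frac{2}{n+1}$ together with $\gamma(2t)\le 2\gamma(t)$.
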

\begin{proof}
$(1)\Rightarrow(2)$ is immediate by taking $\alpha=\frac1n$.
Now assume that $(2)$ holds. Let $\alpha>0$ be arbitrary and choose $n\in\mathbb{N}$ such that
$$\frac{1}{n+1}\le \alpha \le \frac{1}{n}.$$
Then$$(x-\alpha,x+\alpha)\cap A^c \subset (x-1/n,x+1/n)\cap A^c,$$
hence, since $\gamma$ is increasing,
$$\gamma(|(x-\alpha,x+\alpha)\cap A^c|)\le \gamma(|(x-1/n,x+1/n)\cap A^c|).$$
Also, from $$\alpha \ge \frac{1}{n+1}$$
and the monotonicity of $\gamma$, we get
$$\gamma(2\alpha)\ge \gamma\!\left(\frac{2}{n+1}\right).$$
Therefore
$$\frac{\gamma(|(x-\alpha,x+\alpha)\cap A^c|)}{\gamma(2\alpha)}
\le \frac{\gamma(|(x-1/n,x+1/n)\cap A^c|)}{\gamma(2/(n+1))}.$$
Next, using subadditivity and monotonicity of $\gamma$, we have
$$\frac{2}{n} = \frac{2}{n+1}+\frac{2}{n(n+1)}, $$
so
$$ \gamma\!\left(\frac{2}{n}\right) \le \gamma\!\left(\frac{2}{n+1}\right) + \gamma\!\left(\frac{2}{n(n+1)}\right).$$
Since $$\frac{2}{n(n+1)} \le \frac{2}{n+1}, $$
we obtain
$$\gamma\!\left(\frac{2}{n(n+1)}\right)\le \gamma\!\left(\frac{2}{n+1}\right), $$
and hence $$ \gamma\!\left(\frac{2}{n}\right)\le 2\,\gamma\!\left(\frac{2}{n+1}\right). $$
Thus
$$\frac{1}{\gamma(2/(n+1))}\le \frac{2}{\gamma(2/n)}. $$
Combining this with the previous estimate yields
$$ \frac{\gamma(|(x-\alpha,x+\alpha)\cap A^c|)}{\gamma(2\alpha)}
\le 2\,\frac{\gamma(|(x-1/n,x+1/n)\cap A^c|)}{\gamma(2/n)}. $$
As $\alpha\to0^+$, necessarily $n\to\infty$, and by assumption $(2)$ the right-hand side tends to $0$. Therefore
$$\lim_{\alpha\to0^+}
\frac{\gamma(|(x-\alpha,x+\alpha)\cap A^c|)}{\gamma(2\alpha)}=0. $$
So $(1)$ holds.
Hence $(1)$ and $(2)$ are equivalent.
\end{proof}
\begin{proposition}
For any $A \in L$ and any modulus function $\gamma \in \mathcal{M}$, the sets
$\mathcal{D}_\gamma(A)$ $, \mathcal{D}_\gamma^+(A)$ and  $\mathcal{D}_\gamma^-(A)$
are measurable subsets of $\mathbb{R}$. In fact, each of them is an $F_{\sigma\delta}$ set.
\end{proposition}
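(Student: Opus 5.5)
We will prove that $\mathcal{D}_\gamma(A)$ is $F_{\sigma\delta}$ by writing it as a countable intersection of countable unions of closed sets. The first step is to discretize the limit. By Proposition~\ref{pn1.11}, $x\in\mathcal{D}_\gamma(A)$ if and only if
$$\frac{\gamma(|(x-\tfrac1n,x+\tfrac1n)\cap A^c|)}{\gamma(2/n)}\to 0 .$$
This yields
$$\mathcal{D}_\gamma(A)=\bigcap_{k\ge1}\bigcup_{m\ge1}\bigcap_{n\ge m}E_{n,k},\qquad E_{n,k}=\Bigl\{x:\ \gamma\bigl(f_n(x)\bigr)\le \tfrac1k\,\gamma(2/n)\Bigr\},$$
where $f_n(x)=|(x-\tfrac1n,x+\tfrac1n)\cap A^c|$. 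We use the non-strict inequality $\le$ here; this is harmless because $\varepsilon=1/k$ ranges over all $k$.

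The key observation is that $f_n$ is continuous. In fact it is $2$-Lipschitz, since
$$|f_n(x)-f_n(y)|\le |(x-\tfrac1n,x+\tfrac1n)\,\Delta\,(y-\tfrac1n,y+\tfrac1n)|\le 2|x-y|.$$
The next step is to show that each $E_{n,k}$ is closed. Set $c=\gamma(2/n)/k$. Because $\gamma$ is increasing, the set $S=\{t\ge0:\gamma(t)\le c\}$ is an initial segment of $[0,\infty)$, so it has the form $[0,s]$ or $[0,s)$.

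\textbf{Main obstacle.} We do not know that $\gamma$ is continuous away from $0$, so $S$ might be the half-open interval $[0,s)$. In that case $E_{n,k}=f_n^{-1}([0,s))$ is open rather than closed. We will handle this as follows:
\begin{itemize}
\item An open subset of $\mathbb{R}$ is $F_\sigma$, so $E_{n,k}$ is still $F_\sigma$. However, a countable intersection over $n\ge m$ of $F_\sigma$ sets need not be $F_\sigma$, so this alone does not finish the argument.
\item The cleaner route is to replace the threshold. In the condition "eventually $\gamma(f_n(x))\le\gamma(2/n)/k$ for every $k$", we use that $\gamma$ is increasing. Each sublevel set $\{t:\gamma(t)\le c\}$ is squeezed between $[0,s)$ and $[0,s]$, and each strict sublevel set $\{t:\gamma(t)<c\}$ is squeezed likewise.
\item We therefore choose, for each $(n,k)$, the closed interval $[0,s_{n,k}]$ with $s_{n,k}=\sup\{t:\gamma(t)\le \gamma(2/n)/k\}$. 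This gives
$$\{\gamma\circ f_n\le c_{n,k}\}\subset\{f_n\le s_{n,k}\}\subset\{\gamma\circ f_n\le c'_{n,k}\},$$
where $c'_{n,k}$ is the corresponding value with $1/k$ replaced by a slightly larger level, for instance $2/k$. To obtain the right-hand inclusion we will check that $\gamma(s)\le \gamma(2/n)/k$ whenever the supremum is not attained; if necessary we instead compare with the level for $k-1$.
\end{itemize}
The sandwich shows that the $\bigcap_k\bigcup_m\bigcap_{n\ge m}$ of the closed sets $F_{n,k}=f_n^{-1}([0,s_{n,k}])$ equals $\mathcal{D}_\gamma(A)$. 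Each $\bigcap_{n\ge m}F_{n,k}$ is closed, the union over $m$ is $F_\sigma$, and the intersection over $k$ is $F_{\sigma\delta}$. Measurability then follows because Borel sets are Lebesgue measurable.

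For the one-sided sets $\mathcal{D}^{\pm}_\gamma(A)$ we repeat the argument with $g_n(x)=|(x,x+\tfrac1n)\cap A^c|$, which is again Lipschitz, and with denominator $\gamma(1/n)$. This first requires a one-sided analogue of Proposition~\ref{pn1.11}. Its proof is identical: for $\tfrac1{n+1}\le\alpha\le\tfrac1n$ we use $\gamma(1/n)\le 2\gamma(1/(n+1))$, which follows from subadditivity exactly as there. As a final check, Theorem 1 gives $\mathcal{D}_\gamma(A)=\mathcal{D}^+_\gamma(A)\cap\mathcal{D}^-_\gamma(A)$, which is consistent with the above since $F_{\sigma\delta}$ sets are closed under finite intersections.
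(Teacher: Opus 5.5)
\textbf{Gap: the sandwich step is not proved, and the obstacle behind it does not exist.}

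Your overall scheme matches the paper's: discretize via Proposition~\ref{pn1.11}, write $\mathcal{D}_\gamma(A)=\bigcap_k\bigcup_m\bigcap_{n\ge m}E_{n,k}$, and use that $f_n$ is $2$-Lipschitz. The proposal breaks down at the step you call the main obstacle.

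The right-hand inclusion of your sandwich, $\{f_n\le s_{n,k}\}\subset\{\gamma\circ f_n\le c'_{n,k}\}$, needs a bound on $\gamma(s_{n,k})$. You only promise to ``check'' that $\gamma(s_{n,k})\le\gamma(2/n)/k$. That inequality is exactly left-continuity of $\gamma$ at $s_{n,k}$, which you have just said you do not know.

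Your fallbacks (level $2/k$, or level $k-1$) do not work for a merely increasing function. Suppose $\gamma$ could jump at $s$ from below $\gamma(2/n)/k$ to above $\gamma(2/n)/2$. Then $s_{n,j}=s$ for every $2\le j\le k$. The closed set $\{f_n\le s\}$ would contain points where $\gamma\circ f_n$ exceeds all of these levels, and also exceeds $2\gamma(2/n)/k$ once $k\ge4$. So the reverse inclusion $\bigcap_k\bigcup_m\bigcap_{n\ge m}F_{n,k}\subset\mathcal{D}_\gamma(A)$ is not established, and the $F_{\sigma\delta}$ claim remains unproved. Measurability alone would survive: each $E_{n,k}=f_n^{-1}(S)$ with $S$ an interval $[0,s]$ or $[0,s)$ is closed or open, hence Borel.

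The missing idea is that a modulus function is uniformly continuous on $[0,\infty)$. For $t\ge s\ge0$, subadditivity and monotonicity give $0\le\gamma(t)-\gamma(s)\le\gamma(t-s)$, that is, $|\gamma(t)-\gamma(s)|\le\gamma(|t-s|)$. Right-continuity at $0$ gives $\gamma(h)\to\gamma(0)=0$ as $h\to0^+$. The paper records this inequality right after the definition of $\mathcal{M}$.

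With this, $\gamma\circ f_n$ is continuous and each $E_{n,k}$ is closed directly. The rest of your argument then goes through with no sandwich at all, which is exactly how the paper proceeds. Your treatment of $\mathcal{D}^\pm_\gamma(A)$ is fine once continuity is in place: you use $g_n(x)=|(x,x+\tfrac1n)\cap A^c|$ and a one-sided version of Proposition~\ref{pn1.11} based on $\gamma(1/n)\le2\gamma(1/(n+1))$.
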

\begin{proof}
We prove the claim only for $\mathcal{D}_\gamma(A)$, since the proofs for $\mathcal{D}_\gamma^+(A)$ and $\mathcal{D}_\gamma^-(A)$ are analogous.
For each $m,n \in \mathbb{N}$, define
$$ F_{m,n} := \left\{ x \in \mathbb{R} : \frac{\gamma\bigl(|(x-\frac1n,x+\frac1n)\cap A^c|\bigr)}{\gamma(2/n)} \le \frac1m \right\}. $$
By Proposition \ref{pn1.11}, for every $x \in \mathbb{R}$,
$$ x \in \mathcal{D}_\gamma(A) \quad\Longleftrightarrow\quad \lim_{n\to\infty}
\frac{\gamma\bigl(|(x-\frac1n,x+\frac1n)\cap A^c|\bigr)}{\gamma(2/n)} =0. $$
Therefore, $$\mathcal{D}_\gamma(A) = \bigcap_{m\in\mathbb{N}} \bigcup_{N\in\mathbb{N}} \bigcap_{n\ge N} F_{m,n}. $$
So it remains to show that each $F_{m,n}$ is closed.
Fix $n\in\mathbb{N}$ and define $$ f_n(x) :=
\left|(x-\tfrac1n,x+\tfrac1n)\cap A^c\right|. $$ We claim that $f_n$ is continuous. Indeed, for any $x,y\in\mathbb{R}$, $$ |f_n(x)-f_n(y)| \le \left| (x-\tfrac1n,x+\tfrac1n)\,\Delta\,(y-\tfrac1n,y+\tfrac1n) \right| \le 2|x-y|. $$
Hence $f_n$ is Lipschitz continuous.
Next, we show that $\gamma$ is continuous on $[0,\infty)$. Since $\gamma$ is a modulus function, for $a\ge b\ge0$ we have $$ \gamma(a)-\gamma(b)\le \gamma(a-b). $$
Thus, for any $s,t\ge0$, $$ |\gamma(s)-\gamma(t)| \le \gamma(|s-t|). $$
Because $\gamma$ is right-continuous at $0$ and increasing, we have $$ \gamma(h)\to 0 \qquad (h\to 0^+), $$
so the above inequality implies that $\gamma$ is continuous on $[0,\infty)$.
Therefore the function
$$ x \mapsto \frac{\gamma(f_n(x))}{\gamma(2/n)} $$
is continuous for each $n\in\mathbb{N}$. It follows that $$ F_{m,n} = \left\{ x \in \mathbb{R} :
\frac{\gamma(f_n(x))}{\gamma(2/n)} \le \frac1m \right\} $$ is closed.
Consequently,
$$ \mathcal{D}_\gamma(A) = \bigcap_{m\in\mathbb{N}} \bigcup_{N\in\mathbb{N}} \bigcap_{n\ge N} F_{m,n} $$
is an $F_{\sigma\delta}$ set, and in particular it is measurable.
The same argument applies to $\mathcal{D}_\gamma^+(A)$ and $\mathcal{D}_\gamma^-(A)$.
\end{proof}
\begin{proposition}\label{m13}
For any set $A \in L$, any modulus function $\gamma \in \mathcal{M}$, and any $z \in \mathbb{R}$, we have
$$\mathcal{D}_\gamma(A+z)=\mathcal{D}_\gamma(A)+z,$$
where$$ A+z:=\{a+z:a\in A\}. $$
\end{proposition}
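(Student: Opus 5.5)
The plan is to show translation invariance directly from the definition, using only that Lebesgue measure is translation invariant and that complements commute with translations. First I will note the set identity $(A+z)^c = A^c + z$: indeed $y\notin A+z$ iff $y-z\notin A$ iff $y\in A^c+z$. Then for every $y\in\mathbb{R}$ and $\alpha>0$ I will verify
$$(y-\alpha,y+\alpha)\cap (A+z)^c = \bigl((y-z-\alpha,\,y-z+\alpha)\cap A^c\bigr)+z,$$
which is immediate by subtracting $z$ from each element. By translation invariance of Lebesgue measure,
$$|(y-\alpha,y+\alpha)\cap (A+z)^c| = |(y-z-\alpha,\,y-z+\alpha)\cap A^c|.$$
Note also that $A+z\in\mathcal{L}$, so $\mathcal{D}_\gamma(A+z)$ is defined.

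Next I will apply $\gamma$ to both sides and divide by $\gamma(2\alpha)>0$ (positive by property $(i)$), so the two difference quotients
$$\frac{\gamma(|(y-\alpha,y+\alpha)\cap (A+z)^c|)}{\gamma(2\alpha)} \quad\text{and}\quad \frac{\gamma(|((y-z)-\alpha,(y-z)+\alpha)\cap A^c|)}{\gamma(2\alpha)}$$
coincide for every $\alpha>0$. Hence one tends to $0$ as $\alpha\to0^+$ iff the other does, that is, $y\in\mathcal{D}_\gamma(A+z)$ iff $y-z\in\mathcal{D}_\gamma(A)$, iff $y\in\mathcal{D}_\gamma(A)+z$. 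This gives both inclusions simultaneously, so no separate argument for each direction is needed.

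There is no real obstacle here; the only point requiring care is the bookkeeping in the set identity with the complement, i.e.\ checking $(A+z)^c=A^c+z$ rather than confusing it with a translate by $-z$. No modulus properties beyond $\gamma(2\alpha)>0$ are used, and the same argument would give the analogous statements for $\mathcal{D}_\gamma^{+}$ and $\mathcal{D}_\gamma^{-}$ with one-sided intervals.
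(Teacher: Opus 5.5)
Your proposal is correct and follows essentially the same route as the paper: you use $(A+z)^c=A^c+z$, translate the interval intersection, invoke translation invariance of Lebesgue measure, and conclude that the two quotients coincide, so $y\in\mathcal{D}_\gamma(A+z)$ iff $y-z\in\mathcal{D}_\gamma(A)$. Your added remarks that $A+z\in\mathcal{L}$ and $\gamma(2\alpha)>0$ are small points of care that the paper leaves implicit.
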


\begin{proof}
Let $x \in \mathbb{R}$. Then $$ x \in \mathcal{D}_\gamma(A+z) $$ if and only if $$ \lim_{\alpha \to 0^+} \frac{\gamma\bigl(|(x-\alpha,x+\alpha)\cap (A+z)^c|\bigr)}{\gamma(2\alpha)}=0. $$
Since
$$ (A+z)^c=A^c+z, $$ we have $$ (x-\alpha,x+\alpha)\cap (A+z)^c = \bigl((x-z)-\alpha,(x z)+\alpha\bigr)\cap A^c + z. $$ Therefore, by translation invariance of Lebesgue measure,
$$ \left|(x-\alpha,x+\alpha)\cap (A+z)^c\right| = \left|\bigl((x-z)-\alpha,(x-z)+\alpha\bigr)\cap A^c\right|. $$
Hence $$ \lim_{\alpha \to 0^+} \frac{\gamma\bigl(|(x-\alpha,x+\alpha)\cap (A+z)^c|\bigr)}{\gamma(2\alpha)} = \lim_{\alpha \to 0^+}
\frac{\gamma\bigl(|((x-z)-\alpha,(x-z)+\alpha)\cap A^c|\bigr)}{\gamma(2\alpha)}. $$
Thus, $$ x \in \mathcal{D}_\gamma(A+z) \quad\Longleftrightarrow\quad x-z \in \mathcal{D}_\gamma(A). $$
This is equivalent to $$ x \in \mathcal{D}_\gamma(A)+z. $$
Therefore, $$ \mathcal{D}_\gamma(A+z)=\mathcal{D}_\gamma(A)+z. $$
\end{proof}

\begin{theorem}\label{thm2}
Let $A,B \in L$ and let $\gamma \in \mathcal{M}$. Then the following statements hold:

\begin{enumerate}
\item
$\mathcal{D}_\gamma(\varnothing)=\varnothing \qquad\text{and}\qquad \mathcal{D}_\gamma(\mathbb{R})=\mathbb{R}.$

\item If $A \subset B$, then $ \mathcal{D}_\gamma(A)\subset \mathcal{D}_\gamma(B).$
\item
If $A \sim B$, then $ \mathcal{D}_\gamma(A)=\mathcal{D}_\gamma(B).$

\item $ \mathcal{D}_\gamma(A\cap B)=\mathcal{D}_\gamma(A)\cap \mathcal{D}_\gamma(B).$
\end{enumerate}
\end{theorem}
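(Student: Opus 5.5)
We verify the four items directly from the definition of $\mathcal{D}_\gamma$, using only monotonicity, subadditivity and positivity of $\gamma$ away from $0$. Throughout, write $m_E(x,\alpha)=|(x-\alpha,x+\alpha)\cap E^c|$, so that $x\in\mathcal{D}_\gamma(E)$ iff $\gamma(m_E(x,\alpha))/\gamma(2\alpha)\to 0$ as $\alpha\to0^+$.

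For (1), take $E=\mathbb{R}$. Then $m_E\equiv 0$ and $\gamma(0)=0$, so the ratio is identically $0$ and every point belongs to $\mathcal{D}_\gamma(\mathbb{R})$. For $E=\varnothing$ we have $m_E(x,\alpha)=2\alpha$, so the ratio is identically $1$. Hence no point is a $\gamma$-density point of $\varnothing$; here we use $\gamma(2\alpha)>0$ for $\alpha>0$, which follows from property (i).

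For (2), $A\subset B$ gives $B^c\subset A^c$, hence $m_B(x,\alpha)\le m_A(x,\alpha)$. Since $\gamma$ is increasing, the ratio for $B$ is bounded by the ratio for $A$, and the squeeze gives the inclusion. For (3), $A\sim B$ implies $|A^c\Delta B^c|=|A\Delta B|=0$, so $m_A(x,\alpha)=m_B(x,\alpha)$ for every $x$ and $\alpha$. The two ratios therefore coincide and the sets of $\gamma$-density points are equal.

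For (4), the inclusion $\mathcal{D}_\gamma(A\cap B)\subset\mathcal{D}_\gamma(A)\cap\mathcal{D}_\gamma(B)$ follows from (2) applied to $A\cap B\subset A$ and $A\cap B\subset B$. For the reverse inclusion, note $(A\cap B)^c=A^c\cup B^c$, so
\[
m_{A\cap B}(x,\alpha)\le m_A(x,\alpha)+m_B(x,\alpha).
\]
Monotonicity and then subadditivity of $\gamma$ give $\gamma(m_{A\cap B})\le\gamma(m_A)+\gamma(m_B)$. Dividing by $\gamma(2\alpha)$, both terms on the right tend to $0$ when $x\in\mathcal{D}_\gamma(A)\cap\mathcal{D}_\gamma(B)$.

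The only step that needs care is this reverse inclusion in (4). The point is to apply monotonicity before subadditivity, since $\gamma$ is applied to measures and the subadditive inequality is available only for sums of arguments; the estimate above does exactly this. Everything else is immediate from the definitions.
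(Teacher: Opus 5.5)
Your proposal is correct and follows essentially the same route as the paper: you compute the ratios directly for $\mathbb{R}$ and $\varnothing$, use monotonicity of $\gamma$ for (2), use the equality $|(x-\alpha,x+\alpha)\cap A^c|=|(x-\alpha,x+\alpha)\cap B^c|$ for (3), and for the reverse inclusion in (4) apply subadditivity of measure, then monotonicity of $\gamma$, then subadditivity of $\gamma$. Your explicit note that $\gamma(2\alpha)>0$ by property (i) is a small point the paper leaves implicit.
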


\begin{proof}
\begin{enumerate}
\item
For every $x\in\mathbb{R}$ and every $\alpha>0$, we have $ (x-\alpha,x+\alpha)\cap \mathbb{R}^c=\varnothing, $
hence $$ \frac{\gamma\bigl(|(x-\alpha,x+\alpha)\cap \mathbb{R}^c|\bigr)}{\gamma(2\alpha)} = \frac{\gamma(0)}{\gamma(2\alpha)} =0.$$
Therefore $x\in \mathcal{D}_\gamma(\mathbb{R})$ for every $x\in\mathbb{R}$, so
$ \mathcal{D}_\gamma(\mathbb{R})=\mathbb{R}. $

On the other hand, for every $x\in\mathbb{R}$ and every $\alpha>0$,
$$(x-\alpha,x+\alpha)\cap \varnothing^c=(x-\alpha,x+\alpha), $$
so
$$ \frac{\gamma\bigl(|(x-\alpha,x+\alpha)\cap \varnothing^c|\bigr)}{\gamma(2\alpha)} = \frac{\gamma(2\alpha)}{\gamma(2\alpha)} =1. $$
Hence no point belongs to $\mathcal{D}_\gamma(\varnothing)$, and thus $ \mathcal{D}_\gamma(\varnothing)=\varnothing. $

\item Assume that $A\subset B$ and let $x\in \mathcal{D}_\gamma(A)$. Then $ B^c\subset A^c, $ and therefore for every $\alpha>0$, $$ |(x-\alpha,x+\alpha)\cap B^c| \le |(x-\alpha,x+\alpha)\cap A^c|.$$
Since $\gamma$ is increasing, $$ \gamma\bigl(|(x-\alpha,x+\alpha)\cap B^c|\bigr) \le \gamma\bigl(|(x-\alpha,x+\alpha)\cap A^c|\bigr).$$
Dividing by $\gamma(2\alpha)$ and passing to the limit as $\alpha\to0^+$, we obtain
$$ \lim_{\alpha\to0^+} \frac{\gamma\bigl(|(x-\alpha,x+\alpha)\cap B^c|\bigr)}{\gamma(2\alpha)} =0. $$
Thus $x\in \mathcal{D}_\gamma(B)$, and so $ \mathcal{D}_\gamma(A)\subset \mathcal{D}_\gamma(B). $

\item
Assume that $A\sim B$, that is, $ |A\Delta B|=0. $
Then also $|A^c\Delta B^c|=|A\Delta B|=0.$
Hence, for every $\alpha>0$,
$$ \bigl|((x-\alpha,x+\alpha)\cap A^c)\Delta((x-\alpha,x+\alpha)\cap B^c)\bigr|=0, $$
which implies $$ |(x-\alpha,x+\alpha)\cap A^c| = |(x-\alpha,x+\alpha)\cap B^c|. $$
Therefore, for every $x\in\mathbb{R}$, $$ \frac{\gamma\bigl(|(x-\alpha,x+\alpha)\cap A^c|\bigr)}{\gamma(2\alpha)} = \frac{\gamma\bigl(|(x-\alpha,x+\alpha)\cap B^c|\bigr)}{\gamma(2\alpha)}. $$
Passing to the limit, we get
$$ x\in \mathcal{D}_\gamma(A) \quad\Longleftrightarrow\quad x\in \mathcal{D}_\gamma(B).$$
Hence $ \mathcal{D}_\gamma(A)=\mathcal{D}_\gamma(B). $

\item
Since $ A\cap B\subset A \qquad\text{and}\qquad A\cap B\subset B,$
part (2) gives
$$
\mathcal{D}_\gamma(A\cap B)\subset \mathcal{D}_\gamma(A)\cap \mathcal{D}_\gamma(B).
$$

For the reverse inclusion, let $x\in \mathcal{D}_\gamma(A)\cap \mathcal{D}_\gamma(B)$. Put $ I_\alpha:=(x-\alpha,x+\alpha).$
Then
$$
\lim_{\alpha\to0^+}
\frac{\gamma\bigl(|I_\alpha\cap A^c|\bigr)}{\gamma(2\alpha)}=0
\qquad\text{and}\qquad
\lim_{\alpha\to0^+}
\frac{\gamma\bigl(|I_\alpha\cap B^c|\bigr)}{\gamma(2\alpha)}=0.
$$
Now $ (A\cap B)^c=A^c\cup B^c, $
so
$$
I_\alpha\cap (A\cap B)^c
=
(I_\alpha\cap A^c)\cup(I_\alpha\cap B^c).
$$
Hence
$$
|I_\alpha\cap (A\cap B)^c|
\le
|I_\alpha\cap A^c|+|I_\alpha\cap B^c|.
$$
Using the monotonicity and subadditivity of $\gamma$, we obtain
$$
\gamma\bigl(|I_\alpha\cap (A\cap B)^c|\bigr)
\le
\gamma\bigl(|I_\alpha\cap A^c|+|I_\alpha\cap B^c|\bigr)
\le
\gamma\bigl(|I_\alpha\cap A^c|\bigr)+\gamma\bigl(|I_\alpha\cap B^c|\bigr).
$$
Therefore
$$
0\le
\frac{\gamma\bigl(|I_\alpha\cap (A\cap B)^c|\bigr)}{\gamma(2\alpha)}
\le
\frac{\gamma\bigl(|I_\alpha\cap A^c|\bigr)}{\gamma(2\alpha)}
+
\frac{\gamma\bigl(|I_\alpha\cap B^c|\bigr)}{\gamma(2\alpha)}.
$$
Passing to the limit as $\alpha\to0^+$, we get
$$
\lim_{\alpha\to0^+}
\frac{\gamma\bigl(|I_\alpha\cap (A\cap B)^c|\bigr)}{\gamma(2\alpha)}
=0.
$$ Thus $x\in \mathcal{D}_\gamma(A\cap B)$, and so $$ \mathcal{D}_\gamma(A)\cap \mathcal{D}_\gamma(B)\subset \mathcal{D}_\gamma(A\cap B).$$
 
Combining both inclusions, we conclude that
$$ \mathcal{D}_\gamma(A\cap B)=\mathcal{D}_\gamma(A)\cap \mathcal{D}_\gamma(B).$$
\end{enumerate}
\end{proof}

\begin{corollary}\label{cor1.15}
    If the modulus function $\gamma$ satisfies Condition (A)., then the operator $\mathcal{D}_\gamma$ is a lower-density operator.
\end{corollary}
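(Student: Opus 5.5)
The plan is to verify the standard axioms of a lower density operator for $\mathcal{D}_\gamma$ on $\mathcal{L}$ (in the sense of Hejduk--Wiertelak). These are: (i) $\mathcal{D}_\gamma(\varnothing)=\varnothing$ and $\mathcal{D}_\gamma(\mathbb{R})=\mathbb{R}$; (ii) $\mathcal{D}_\gamma(A\cap B)=\mathcal{D}_\gamma(A)\cap\mathcal{D}_\gamma(B)$; (iii) $A\sim B$ implies $\mathcal{D}_\gamma(A)=\mathcal{D}_\gamma(B)$; (iv) $A\sim\mathcal{D}_\gamma(A)$ for every $A\in\mathcal{L}$. Monotonicity also follows, from (ii) or directly from Theorem \ref{thm2}(2). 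Along the way we note that $\mathcal{D}_\gamma$ indeed maps $\mathcal{L}$ into $\mathcal{L}$, which is the preceding proposition: $\mathcal{D}_\gamma(A)$ is an $F_{\sigma\delta}$ set.

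Items (i)--(iii) and monotonicity need no hypothesis on $\gamma$. They are exactly parts (1), (4), (3) and (2) of Theorem \ref{thm2}, so I will simply cite them.

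The only place where Condition (A) enters is axiom (iv), the Lebesgue density property $|A\Delta\mathcal{D}_\gamma(A)|=0$. Here I invoke Theorem \ref{m3}. Under Condition (A) it gives $\mathcal{D}_\gamma(A)=\Phi(A)$, and hence, by the classical Lebesgue Density Theorem, $A\sim\mathcal{D}_\gamma(A)$.

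No step is genuinely hard. The main point to check is that the definition of lower density operator used matches these four axioms. Some authors phrase (iv) as $|A\Delta\Phi(A)|=0$ and include measurability of the image, and both are covered. If instead the definition asks only for $\mathcal{D}_\gamma(A)\subset$ (a.e.) $A$ or similar variants, they follow immediately from $\mathcal{D}_\gamma=\Phi$ and the known fact that $\Phi$ is a lower density operator. As an alternative, one could finish the whole argument by observing that Theorem \ref{m3} identifies $\mathcal{D}_\gamma$ with $\Phi$ on all of $\mathcal{L}$, so $\mathcal{D}_\gamma$ inherits every property of the classical lower density operator $\Phi$.
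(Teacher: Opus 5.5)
Your proposal is correct and takes the same route as the paper, which obtains the corollary directly from Theorem~\ref{thm2} (the values at $\varnothing$ and $\mathbb{R}$, multiplicativity on intersections, invariance under $\sim$) and Theorem~\ref{m3} (the property $A\sim\mathcal{D}_\gamma(A)$, via $\mathcal{D}_\gamma=\Phi$ under Condition (A)). Your explicit check that $\mathcal{D}_\gamma$ maps $\mathcal{L}$ into $\mathcal{L}$ is a detail the paper leaves implicit; it follows either from the $F_{\sigma\delta}$ proposition or simply from $\mathcal{D}_\gamma(A)=\Phi(A)$.
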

\begin{proof}
    This result is an immediate consequence of Theorems \ref{m3} and \ref{thm2}. 
\end{proof}
The notion of a $\psi$-density point of a set was introduced in \cite{mepsi}. In the following theorem, we compare this concept with the notion of a $\gamma$-density point. More precisely, under suitable assumptions on $\psi$, we show that $\psi$ induces a modulus function $\gamma$, and that every $\psi$-density point of a set is also a $\gamma$-density point.
\begin{theorem}\label{thm15}
Let $\psi \in C$ be subadditive, and define $\gamma:[0,\infty)\to[0,\infty)$ by
$ \gamma(0)=0,\qquad \gamma(t)=\psi(t)\quad (t>0). $
Then $\gamma \in \mathcal{M}$.
Moreover, if
$$ \limsup_{t\to 0^+}\frac{\psi(t)}{t}<\infty, $$
then every $\psi$-density point of $A$ is a $\gamma$-density point of $A$.
\end{theorem}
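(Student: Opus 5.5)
The argument has two parts: verify the axioms of a modulus function directly from the properties of the class $C$, then compare the two density quotients using a linear upper bound on $\psi$ near the origin.

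\emph{Step 1: $\gamma\in\mathcal{M}$.} We check the four conditions of the definition in turn.

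Condition $(iv)$ holds because $\psi$ is non-decreasing on $(0,\infty)$ and $\gamma(0)=0\le\psi(t)$ for $t>0$.

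Condition $(iii)$ holds because $\lim_{t\to0^+}\psi(t)=0=\gamma(0)$, which is exactly right-continuity of $\gamma$ at $0$.

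For $(ii)$, the case $a,b>0$ is the assumed subadditivity of $\psi$. If $a=0$ or $b=0$, the inequality reads $\gamma(b)\le\gamma(0)+\gamma(b)$ or its mirror image, which is trivial.

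For $(i)$ we need $\psi(t)>0$ for $t>0$. This is implicit in the definition of a $\psi$-density point recalled in the introduction, since there one divides by $2\alpha\,\psi(2\alpha)$; I will state it explicitly as part of the standing convention for $C$. Without it, $\psi$ would vanish on a whole interval $(0,t_0]$ and the quotient defining $\Phi_\psi$ would be meaningless.

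\emph{Step 2: comparison of density points.} Put $m(\alpha)=|(x-\alpha,x+\alpha)\cap A^c|$, so that $0\le m(\alpha)\le 2\alpha$. From $\limsup_{t\to0^+}\psi(t)/t<\infty$, fix $K>0$ and $\delta>0$ with $\psi(t)\le Kt$ for $0<t<\delta$. For $0<2\alpha<\delta$ we then have $\gamma(m(\alpha))\le K\,m(\alpha)$. When $m(\alpha)>0$ this follows from the bound just stated. When $m(\alpha)=0$ both sides are $0$.

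Dividing by $\gamma(2\alpha)=\psi(2\alpha)>0$ gives
$$
0\le\frac{\gamma(m(\alpha))}{\gamma(2\alpha)}\le \frac{K\,m(\alpha)}{\psi(2\alpha)} = 2K\alpha\cdot\frac{m(\alpha)}{2\alpha\,\psi(2\alpha)}.
$$
If $x$ is a $\psi$-density point of $A$, the last fraction tends to $0$ as $\alpha\to0^+$, and so does the factor $2K\alpha$. Hence the left side tends to $0$, which says exactly that $x\in\mathcal{D}_\gamma(A)$.

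It is worth noting that the estimate is generous: we only need the last fraction to stay bounded, because the factor $2K\alpha$ already goes to zero. The hypothesis $\limsup_{t\to0^+}\psi(t)/t<\infty$ is what allows us to replace $\gamma(m)$ by a linear quantity in $m$; this is the only genuinely non-formal step. The main point requiring care is the positivity of $\psi$ in Step 1, which the plan handles by the convention described there.
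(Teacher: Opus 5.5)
Your proposal is correct and follows the same route as the paper. You verify the modulus axioms directly from the properties of $\psi$, then use $\psi(t)\le Kt$ near $0$ to bound $\gamma(m(\alpha))/\gamma(2\alpha)$ by $2K\alpha$ times the $\psi$-density quotient. The paper simply asserts $\psi(t)>0$ for $t>0$, which is what your convention makes explicit. Together with your separate treatment of $m(\alpha)=0$, this only tidies up the same argument; note also that, given monotonicity and subadditivity, a single zero of $\psi$ on $(0,\infty)$ would force $\psi\equiv 0$.
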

\begin{proof}
First we show that $\gamma \in \mathcal{M}$.

Since $\psi \in C$, the function $\psi$ is continuous, nondecreasing on $(0,\infty)$, and
$$
\lim_{t\to 0^+}\psi(t)=0.
$$
By the definition of $\gamma$,
$$
\gamma(0)=0,\qquad \gamma(t)=\psi(t)\quad (t>0).
$$
Hence $\gamma$ is right-continuous at $0$. Also, since $\psi$ is nondecreasing on $(0,\infty)$, $\gamma$ is increasing on $[0,\infty)$.

Next, for $a,b>0$, the subadditivity of $\psi$ gives
$$
\gamma(a+b)=\psi(a+b)\le \psi(a)+\psi(b)=\gamma(a)+\gamma(b).
$$
If one of $a,b$ is equal to $0$, the inequality is trivial. Thus $\gamma$ is subadditive on $[0,\infty)$.

Finally, for every $t>0$ we have $\gamma(t)=\psi(t)>0$, while $\gamma(0)=0$. Therefore
$$
\gamma(t)=0 \iff t=0.
$$
So $\gamma$ satisfies all the conditions of a modulus function, and hence $\gamma \in \mathcal{M}$.

Now let $x$ be a $\psi$-density point of $A$. Then, by definition,
$$
\lim_{h\to 0^+}\frac{|A^c\cap [x-h,x+h]|}{2h\,\psi(2h)}=0.
$$
Since endpoints do not affect Lebesgue measure, this is equivalent to
$$
\lim_{h\to 0^+}\frac{|A^c\cap (x-h,x+h)|}{2h\,\psi(2h)}=0.
$$
$$
|(x-h,x+h)\cap A^c|\le 2h
$$
for every $h>0$, and the above limit becomes
$$
\lim_{h\to 0^+}\frac{|(x-h,x+h)\cap A^c|}{2h\,\psi(2h)}=0.
$$

Assume now that
$$
\limsup_{t\to 0^+}\frac{\psi(t)}{t}<\infty.
$$
Then there exist constants $C>0$ and $\delta>0$ such that
$$
\psi(t)\le Ct \qquad \text{for all } 0<t<\delta.
$$
For $0<h<\delta/2$, we have $|(x-h,x+h)\cap A^c|\le 2h<\delta$, and therefore
$$
\gamma(|(x-h,x+h)\cap A^c|)=\psi(|(x-h,x+h)\cap A^c|)\le C|(x-h,x+h)\cap A^c|.
$$
Hence
$$
\frac{\gamma(|(x-h,x+h)\cap A^c|)}{\gamma(2h)}
=
\frac{\psi(|(x-h,x+h)\cap A^c|)}{\psi(2h)}
$$ $$ \le 
C\,\frac{|(x-h,x+h)\cap A^c|}{\psi(2h)}
=
2Ch\cdot \frac{|(x-h,x+h)\cap A^c|}{2h\,\psi(2h)}.
$$
As $h\to 0^+$, we have
$$
2Ch\to 0
$$
and
$$
\frac{|(x-h,x+h)\cap A^c|}{2h\,\psi(2h)}\to 0.
$$
Therefore,
$$
\frac{\gamma(|(x-h,x+h)\cap A^c|)}{\gamma(2h)}\to 0.
$$
That is,
$$
\lim_{h\to 0^+}\frac{\gamma(|(x-h,x+h)\cap A^c|)}{\gamma(2h)}=0.
$$
Thus $x$ is a $\gamma$-density point of $A$.

Hence every $\psi$-density point of $A$ is a $\gamma$-density point of $A$.
\end{proof}

\section{Modulus Density topology($\gamma$-Density topology)}
The idea of a new density-type topology, which is defined with the help of the $\gamma$-density operator, is presented in this section.

\begin{theorem}
For any $A\in \mathcal{L}$ and $\gamma \in \mathcal{M}$, $$\tau_{\gamma}:= \{A\in \mathcal{L}: A \subset \mathcal{D}_{\gamma}(A)\}$$ is a topology on $\mathbb{R}$.
\end{theorem}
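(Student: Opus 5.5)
The plan is to verify the three topology axioms for $\tau_\gamma$ directly, using only the properties of $\mathcal{D}_\gamma$ established in Theorem \ref{thm2}, Theorem \ref{thm6}, and the classical Lebesgue Density Theorem. The delicate point is that membership in $\tau_\gamma$ requires measurability. Measurability of an arbitrary union is not automatic, so that axiom is where the real work lies.

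\emph{Empty set, whole line, finite intersections.} First, $\varnothing\subset\mathcal{D}_\gamma(\varnothing)$ holds trivially. Also $\mathbb{R}\subset\mathcal{D}_\gamma(\mathbb{R})=\mathbb{R}$ by Theorem \ref{thm2}(1). For $A,B\in\tau_\gamma$, the set $A\cap B$ is measurable. By Theorem \ref{thm2}(4),
$$A\cap B\subset \mathcal{D}_\gamma(A)\cap\mathcal{D}_\gamma(B)=\mathcal{D}_\gamma(A\cap B),$$
so $A\cap B\in\tau_\gamma$.

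\emph{Arbitrary unions.} Let $\{A_s\}_{s\in S}\subset\tau_\gamma$ and put $U=\bigcup_s A_s$. I would first produce a countable subfamily $\{A_{s_k}\}$ whose union $V$ is an essential supremum of the family, meaning $|A_s\setminus V|=0$ for every $s$. To get it, fix the finite measure $\mu(E)=\int_E \frac{dx}{1+x^2}$, which has the same null sets as Lebesgue measure. Let $\beta$ be the supremum of $\mu$ over all countable unions of members of the family. Choose countable unions $V_j$ with $\mu(V_j)\to\beta$ and set $V=\bigcup_j V_j$. Then $V$ is again a countable union and $\mu(V)=\beta$. If some $A_s$ had $|A_s\setminus V|>0$, then $V\cup A_s$ would be a countable union with $\mu$-measure exceeding $\beta$, a contradiction.

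With $V$ in hand, take $x\in U$, say $x\in A_s$. Since $A_s\cup V\sim V$, Theorem \ref{thm2}(2),(3) gives
$$x\in\mathcal{D}_\gamma(A_s)\subset\mathcal{D}_\gamma(A_s\cup V)=\mathcal{D}_\gamma(V).$$
Hence $V\subset U\subset\mathcal{D}_\gamma(V)\subset\Phi(V)$, the last inclusion by Theorem \ref{thm6}. It follows that $U\setminus V\subset\Phi(V)\setminus V$, which is null by the Lebesgue Density Theorem. By completeness of Lebesgue measure, $U=V\cup(U\setminus V)$ is measurable, and $U\sim V$. Therefore $U\subset\mathcal{D}_\gamma(V)=\mathcal{D}_\gamma(U)$ by Theorem \ref{thm2}(3), so $U\in\tau_\gamma$.

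The main obstacle is exactly this measurability of an arbitrary union. The countable essential-union construction is the step to get right. Once it is in place, the key observation is that Theorem \ref{thm6} lets us reduce to the classical density theorem, with no assumption such as Condition (A) on $\gamma$.
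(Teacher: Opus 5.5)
Your proof is correct and follows the same route as the paper. Both arguments choose a measurable $B\subset U$ with every $A_s\setminus B$ null, get $U\subset\mathcal{D}_\gamma(B)$ from monotonicity and null-set invariance of $\mathcal{D}_\gamma$ (Theorem~\ref{thm2}), and conclude that $U$ is measurable from $|\mathcal{D}_\gamma(B)\setminus B|=0$.

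The differences are minor. The paper takes $B$ to be a measurable kernel of $U$, citing the hull property, whereas you build $B$ as a countable subunion by a measure-exhaustion argument; this is more self-contained and also shows $U$ is a.e.\ equal to a countable subunion. You also justify $|\mathcal{D}_\gamma(B)\setminus B|=0$ explicitly via Theorem~\ref{thm6} and the Lebesgue Density Theorem, whereas the paper asserts it without comment even though Condition (A) is not assumed.
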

\begin{proof}
By Theorem \ref{thm2}, we already have  $\emptyset \in \tau_{\gamma}$ and $\mathbb{R}\in \tau_{\gamma}$. Let $A$ and $B$ be an arbitrary sets in $\tau_{\gamma}$ and let
    $x\in A\cap B$ be an arbitrary point. Then, $x\in A \subset \mathcal{D}_{\gamma}(A)$ and $x\in B \subset \mathcal{D}_{\gamma}(B)$. So, 
   $$x\in \mathcal{D}_{\gamma}(A) \cap \mathcal{D}_{\gamma}(A)= \mathcal{D}_{\gamma}(A\cap B)$$ implies that $A\cap B \subset \mathcal{D}_{\gamma}(A\cap B).$  Hence, $A\cap B \in \tau_{\gamma}.$

   Let $\{A_{i}\}_{i\in \lambda}$ be a collection of $\gamma$-open subset of $\mathbb{R}$. Let $B$ be a measurable kernel of the set $\cup_{i\in \lambda} A_{i}$,(Such B exists because the $(\mathbb{R}, \mathcal{L}, \mathcal{I})$ where $\mathcal{I}$ is the sigma ideal of null sets has the Hull property.) For every  ${i\in \lambda}$ we have, $$|(A_{i}\cap B)\Delta A_i|=0$$ and $$B\subset \cup_{i\in \lambda} A_{i} \subset \cup_{i\in \lambda} \mathcal{D}_\gamma (A_{i})\subset \cup_{i\in \lambda} \mathcal{D}_\gamma (A_{i} \cap B)\subset \mathcal{D}_\gamma ( B). $$ Since $|\mathcal{D}_\gamma (B)-B|=0,$ $\cup_{i\in \lambda} A_{i} \in \mathcal{L}.$ Furhtermore let $x\in \bigcup_{i\in \lambda} A_i$ then $x\in A_j$ for some $j\in \lambda$. Since $A_j$ is $\gamma$-open set and $A_j \subset \bigcup_{i\in \lambda} A_i$, then by $(ii)$ of Theorem \ref{thm2} we have  $$\mathcal{D}_{\gamma}(A_j)\subset \mathcal{D}_{\gamma}(\bigcup_{i\in \lambda} A_i).$$ Hence, $x\in \mathcal{D}_{\gamma}(\bigcup_{i\in \lambda}A_i)$ and this proves the theorem.
\end{proof}
\begin{theorem}
Let $\gamma_1,\gamma_2\in M$. Assume that there exist constants $a,b,\delta>0$ such that
$$ a\leq \frac{\gamma_1(t)}{\gamma_2(t)}\leq b $$
for all $0<t<\delta$. Then
$$ \tau_{\gamma_1}=\tau_{\gamma_2}. $$
\end{theorem}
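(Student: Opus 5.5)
The plan is to reduce the statement directly to Proposition \ref{pn1.5}. Under the hypothesis $a\le \gamma_1(t)/\gamma_2(t)\le b$ for $0<t<\delta$, that proposition gives $\mathcal{D}_{\gamma_1}(A)=\mathcal{D}_{\gamma_2}(A)$ for every $A\in\mathcal{L}$. Once the two operators coincide on $\mathcal{L}$, the two topologies coincide by their definitions:
$$\tau_{\gamma_i}=\{A\in\mathcal{L}: A\subset \mathcal{D}_{\gamma_i}(A)\},\qquad i=1,2.$$
Concretely, take $A\in\tau_{\gamma_1}$. Then $A\in\mathcal{L}$ and $A\subset\mathcal{D}_{\gamma_1}(A)=\mathcal{D}_{\gamma_2}(A)$, so $A\in\tau_{\gamma_2}$. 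The reverse inclusion follows by symmetry, since the hypothesis is equivalent to $1/b\le \gamma_2(t)/\gamma_1(t)\le 1/a$ on the same range of $t$.

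The only step needing care is checking that Proposition \ref{pn1.5} genuinely applies pointwise, i.e.\ that both arguments fed to the ratio lie in $(0,\delta)$. For $\alpha<\delta/2$ we have $2\alpha<\delta$ and $0\le |(x-\alpha,x+\alpha)\cap A^c|\le 2\alpha<\delta$. When the measure is $0$, both $\gamma_i$ vanish, so the sandwich inequality still holds trivially.

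This yields, for $\alpha<\delta/2$,
$$\frac{a}{b}\,\frac{\gamma_2(|(x-\alpha,x+\alpha)\cap A^c|)}{\gamma_2(2\alpha)}\le \frac{\gamma_1(|(x-\alpha,x+\alpha)\cap A^c|)}{\gamma_1(2\alpha)}\le \frac{b}{a}\,\frac{\gamma_2(|(x-\alpha,x+\alpha)\cap A^c|)}{\gamma_2(2\alpha)}.$$
Hence the $\gamma_1$-ratio tends to $0$ exactly when the $\gamma_2$-ratio does. I expect no real obstacle; this small-$\alpha$ restriction and the zero-measure case are the only points to state explicitly.
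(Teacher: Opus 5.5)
Your proposal is correct and follows the paper's proof essentially verbatim: you invoke Proposition~\ref{pn1.5} to get $\mathcal{D}_{\gamma_1}(A)=\mathcal{D}_{\gamma_2}(A)$ for every $A\in\mathcal{L}$, and then read off both inclusions $\tau_{\gamma_1}\subset\tau_{\gamma_2}$ and $\tau_{\gamma_2}\subset\tau_{\gamma_1}$ from the definition of $\tau_{\gamma_i}$. Your extra check is a welcome addition: taking $\alpha<\delta/2$ keeps both arguments in $[0,\delta)$, and the zero-measure case is trivial. This makes explicit the ``for small $\alpha$'' step that the paper's proof of that proposition leaves implicit.
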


\begin{proof}
By Proposition \ref{pn1.5}, for every $A\in L$ we have $ D_{\gamma_1}(A)=D_{\gamma_2}(A). $
Now let $A\in \tau_{\gamma_1}$. Then $ A\in L \quad \text{and} \quad A\subset D_{\gamma_1}(A). $
Since $D_{\gamma_1}(A)=D_{\gamma_2}(A)$, it follows that $ A\subset D_{\gamma_2}(A). $
Hence $A\in \tau_{\gamma_2}$. Therefore, $ \tau_{\gamma_1}\subset \tau_{\gamma_2}. $
Similarly, if $A\in \tau_{\gamma_2}$, then
$ A\in L \quad \text{and} \quad A\subset D_{\gamma_2}(A)=D_{\gamma_1}(A), $
so $A\in \tau_{\gamma_1}$. Thus, $ \tau_{\gamma_2}\subset \tau_{\gamma_1}.$
Consequently, $ \tau_{\gamma_1}=\tau_{\gamma_2}. $
\end{proof}

\begin{definition}\label{def4}
A set $V\subset \mathbb{R}$ is $\tau_{\gamma}$-neighborhood of a point $x\in \mathbb{R}$ if there exists a set $A_x\in \tau_\gamma$ such that $x\in A_x$ and $A_x\subset V$. 
\end{definition}
\begin{proposition}
 A set $V$ is $\tau_\gamma$-open if and only if $V$ is $\tau_\gamma$-neighborhood of each of its points. 
\end{proposition}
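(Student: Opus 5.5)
One direction is immediate from Definition \ref{def4}. If $V\in\tau_\gamma$, then for every $x\in V$ I take $A_x:=V$ itself. Then $x\in A_x\subset V$ and $A_x\in\tau_\gamma$, so $V$ is a $\tau_\gamma$-neighborhood of each of its points.

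For the converse, suppose that for every $x\in V$ there is $A_x\in\tau_\gamma$ with $x\in A_x\subset V$. I will show that
$$V=\bigcup_{x\in V}A_x .$$
The inclusion $\supset$ holds because each $A_x\subset V$. The inclusion $\subset$ holds because each $x\in V$ lies in its own $A_x$.

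Since $\tau_\gamma$ has already been shown to be a topology (the preceding theorem), it is closed under arbitrary unions. Hence $V\in\tau_\gamma$. Using that theorem here settles the only delicate point, namely that $V$ is Lebesgue measurable. For an arbitrary family of measurable sets this is not automatic; the topology theorem obtains it through the measurable kernel argument and the fact that $\mathcal{D}_\gamma(B)\setminus B$ is null.

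If instead one wants a direct argument avoiding that theorem, the plan has two steps:
\begin{enumerate}
\item Show that $V\in\mathcal{L}$. Take a measurable kernel $B\subset V$. By monotonicity (Theorem \ref{thm2}(2)) and the equality $\mathcal{D}_\gamma(A_x)=\mathcal{D}_\gamma(A_x\cap B)$ (Theorem \ref{thm2}(3), since $A_x\sim A_x\cap B$), we get $V\subset\mathcal{D}_\gamma(B)$. Because $\mathcal{D}_\gamma(B)\subset\Phi(B)$ by Theorem \ref{thm6}, and $\Phi(B)\setminus B$ is null by the Lebesgue Density Theorem, $V$ differs from $B$ by a subset of a null set. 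Hence $V$ is measurable.
\item Show that $V\subset\mathcal{D}_\gamma(V)$. Each $x\in V$ satisfies $x\in A_x\subset\mathcal{D}_\gamma(A_x)\subset\mathcal{D}_\gamma(V)$, again by Theorem \ref{thm2}(2).
\end{enumerate}
The only step requiring care is the measurability in step 1; the rest is bookkeeping.
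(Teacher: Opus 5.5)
Your proof is correct and is the standard argument the paper has in mind when it says the result ``follows directly from Definition \ref{def4}'': take $A_x=V$ for one direction, and for the other write $V=\bigcup_{x\in V}A_x$ and use that $\tau_\gamma$ is closed under arbitrary unions. Your optional direct argument is also sound; its measurability step correctly obtains $|\mathcal{D}_\gamma(B)\setminus B|=0$ from Theorem \ref{thm6} together with the Lebesgue Density Theorem, which the paper's topology theorem asserts without justification when Condition (A) is not assumed.
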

\begin{proof}
   The result follows directly from Definition \ref{def4}, and the proof is omitted for brevity. 
\end{proof}
\begin{theorem}\label{thm2.6}
    The Lebesgue density topology is finer than the modulus density topology, i.e., $$\tau_\gamma \subset \tau_d.$$
\end{theorem}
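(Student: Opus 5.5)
The plan is to reduce the inclusion $\tau_\gamma\subset\tau_d$ to the pointwise inclusion $\mathcal{D}_\gamma(A)\subset\Phi(A)$ for every $A\in\mathcal{L}$, which is the content of Theorem \ref{thm6}. Fix $A\in\tau_\gamma$. By definition of $\tau_\gamma$, $A$ is Lebesgue measurable and $A\subset\mathcal{D}_\gamma(A)$. Applying Theorem \ref{thm6} to $A$ gives $\mathcal{D}_\gamma(A)\subset\Phi(A)$. Chaining the two inclusions yields $A\subset\Phi(A)$, which is exactly the membership condition for $\tau_d$ (measurability being automatic, or already given). Hence $A\in\tau_d$, and since $A$ was arbitrary, $\tau_\gamma\subset\tau_d$.

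The only substantive ingredient is therefore Theorem \ref{thm6}, and the main obstacle is that its printed proof stops before the conclusion. So I would first complete that argument. Suppose $x\in\mathcal{D}_\gamma(A)$ but $x\notin\Phi(A)$. Then there are $\varepsilon>0$ and $\alpha_n\to0^+$ with
\[
|(x-\alpha_n,x+\alpha_n)\cap A^c|\ge \varepsilon\,2\alpha_n .
\]
Choose $N\in\mathbb{N}$ with $N\varepsilon\ge1$, so that $2\alpha_n\le N\,|(x-\alpha_n,x+\alpha_n)\cap A^c|$.

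Monotonicity and iterated subadditivity ($\gamma(Nt)\le N\gamma(t)$) then give
\[
\gamma(2\alpha_n)\le N\gamma\bigl(|(x-\alpha_n,x+\alpha_n)\cap A^c|\bigr),
\]
that is,
\[
\frac{\gamma\bigl(|(x-\alpha_n,x+\alpha_n)\cap A^c|\bigr)}{\gamma(2\alpha_n)}\ge \frac1N>0
\]
for all $n$. This contradicts the assumption that this ratio tends to $0$, so $x\in\Phi(A)$.

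With Theorem \ref{thm6} thus secured, the topological inclusion follows by the two-line chain above. If one also wants strictness in general, I would remark, without making it part of the proof, that Example \ref{ex1.7} suggests it: the set $B$ there, perhaps after modification to make it $\tau_d$-open near $0$, should lie in $\tau_d$ but not in $\tau_\gamma$. The statement as given, however, only requires the inclusion.
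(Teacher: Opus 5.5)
Your proof is correct and is the same as the paper's: for $A\in\tau_\gamma$ you chain $A\subset\mathcal{D}_\gamma(A)\subset\Phi(A)$ using Theorem \ref{thm6}. Your completion of the truncated proof of Theorem \ref{thm6} supplies the missing final step of the paper's own argument. You divide $\gamma(2\alpha_n)\le N\gamma(\cdot)$ by $\gamma(2\alpha_n)>0$ to bound the ratio below by $1/N$, which contradicts the assumption that it tends to $0$.
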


\begin{proof}
    Let $A\in \tau_\gamma$, then $A\subset \mathcal{D}_\gamma (A).$ By Theorem \ref{thm6}, $A\subset \Phi(A)$ so $A\in \tau_d$.
\end{proof}
\begin{theorem}\label{thm2.5}
     If the modulus function $\gamma$ has the condition A., then the topology $\tau_{\gamma}=\tau_d.$ 
\end{theorem}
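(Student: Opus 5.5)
The plan is to prove the two inclusions separately and reduce each to operator equalities already established. The inclusion $\tau_\gamma\subset\tau_d$ holds for every $\gamma\in\mathcal{M}$ by Theorem \ref{thm2.6}. That result rests on Theorem \ref{thm6}: if $A\subset\mathcal{D}_\gamma(A)\subset\Phi(A)$, then $A\in\tau_d$. So only the reverse inclusion $\tau_d\subset\tau_\gamma$ needs work, and this is where Condition (A) is used.

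For the reverse inclusion, let $A\in\tau_d$, so $A\subset\Phi(A)$. First I would check that $A\in\mathcal{L}$, since $\tau_\gamma$ is defined as a subfamily of $\mathcal{L}$. This follows either from the convention that members of $\tau_d$ are measurable, as in the introduction where $\Phi:\mathcal{L}\to\mathcal{L}$, or from the standard fact that every $\tau_d$-open set is measurable. Then, since $\gamma$ satisfies Condition (A), Theorem \ref{m3} gives $\mathcal{D}_\gamma(A)=\Phi(A)$; Theorem \ref{thm3} alone already suffices, giving $\Phi(A)\subset\mathcal{D}_\gamma(A)$. Hence
$$A\subset\Phi(A)\subset\mathcal{D}_\gamma(A),$$
so $A\in\tau_\gamma$. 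Combining the two inclusions gives $\tau_\gamma=\tau_d$.

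The only delicate point is the measurability bookkeeping in the reverse inclusion. The sets in $\tau_d$ must be measurable so that $\Phi(A)$ and $\mathcal{D}_\gamma(A)$ are defined. I would settle this by reading $\tau_d$ as $\{A\in\mathcal{L}: A\subset\Phi(A)\}$, matching the definition of $\tau_\gamma$; if needed, I would cite the classical result that density-open sets are measurable. Everything else is a direct application of Theorems \ref{thm6} and \ref{thm3}.
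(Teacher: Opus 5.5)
Your proposal is correct and is essentially the paper's proof: $\tau_\gamma\subset\tau_d$ comes from Theorem~\ref{thm2.6}, and for $A\in\tau_d$ the chain $A\subset\Phi(A)\subset\mathcal{D}_\gamma(A)$ comes from Theorem~\ref{thm3}. Your explicit check that members of $\tau_d$ belong to $\mathcal{L}$ is bookkeeping the paper leaves implicit, and it does not change the argument.
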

\begin{proof}
From theorem \ref{thm2.6}, $\tau_\gamma \subset \tau_d.$
Assume that $A\in \tau_d$, $A\subset \Phi(A)$. By Theorem \ref{thm3} $\Phi(A)\subset \mathcal{D}_\gamma(A)$, we have $A\in \tau_\gamma.$ 
\end{proof}

\begin{example}
Consider the modulus function $\gamma$ given in Example \ref{ex1.7}, and let $B \subset \mathbb{R}$ be the set constructed there. By Example \ref{ex1.7}, we have $ 0 \in \Phi(B) \setminus D_{\gamma}(B).$
Now define $ U:=\Phi(B). $
We claim that $U$ is open in the Lebesgue density topology, but it is not open in the $\gamma$-density topology.
First, $U \in \tau_d$. Indeed, by the definition of the Lebesgue density topology,
$\tau_d=\{A \in L : A \subset \Phi(A)\}.$
Since $U=\Phi(B)$, it follows from the idempotence of the density operator that $ \Phi(U)=\Phi(\Phi(B))=\Phi(B)=U, $
and hence $ U \subset \Phi(U).$
Therefore $U \in \tau_d$.

On the other hand, by the Lebesgue Density Theorem,
$ B \sim \Phi(B)=U. $
Hence, by Theorem \ref{thm2}(iii),
$D_{\gamma}(U)=D_{\gamma}(B).$ Since $0 \in \Phi(B)=U$ and $0 \notin D_{\gamma}(B)$, we obtain
$ 0 \notin D_{\gamma}(U). $
Thus $ 0 \in U \setminus D_{\gamma}(U), $
so $ U \not\subset D_{\gamma}(U).$
Therefore $U \notin \tau_{\gamma}$.

Consequently, $U$ is open in the Lebesgue density topology, but it is not open in the $\gamma$-density topology. This shows that if $\gamma$ does not satisfy Condition (A), then $\tau_d$ and $\tau_{\gamma}$ need not coincide.
\end{example}
\begin{theorem}
Under the assumptions of Theorem \ref{thm15}, the $\gamma$-density topology is finer than the $\psi$-density topology.
\end{theorem}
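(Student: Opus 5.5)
The plan is to reduce the topological comparison to the pointwise inclusion of Theorem \ref{thm15}: under its assumptions every $\psi$-density point of a measurable set is a $\gamma$-density point, that is,
$$\Phi_\psi(A)\subset \mathcal{D}_\gamma(A)\qquad\text{for every } A\in\mathcal{L}.$$
"Finer" will then be read as the inclusion $\tau_\psi\subset\tau_\gamma$. This is the direction that the pointwise inclusion naturally yields.

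The steps are as follows. Let $A\in\tau_\psi$. By the definition of $\tau_\psi$ in \cite{mepsi}, $A$ is Lebesgue measurable and $A\subset \Phi_\psi(A)$. I will check explicitly that measurability is part of the definition of $\tau_\psi$, since the paper's displayed definition omits it. If it is not included, I will instead invoke the fact that $\tau_\psi\subset\tau_d$, all of whose members are measurable. Then Theorem \ref{thm15} gives
$$A\subset \Phi_\psi(A)\subset \mathcal{D}_\gamma(A),$$
so $A\in\tau_\gamma$ by the definition of $\tau_\gamma$. Hence $\tau_\psi\subset\tau_\gamma$.

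One point needs care: the convention for $\psi$-density points. The paper uses open intervals in the introduction and closed intervals in the proof of Theorem \ref{thm15}. These agree because endpoints are null sets, so the two definitions of $\Phi_\psi(A)$ coincide, and Theorem \ref{thm15} applies to exactly the set $\Phi_\psi(A)$ used in defining $\tau_\psi$.

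The main obstacle is not the argument, which is a one-line consequence of Theorem \ref{thm15}, but the consistency of the statement with earlier results. By Theorem \ref{thm2.6}, $\tau_\gamma\subset\tau_d$, while the introduction asserts that $\tau_\psi$ is finer than $\tau_d$, i.e. $\tau_d\subset\tau_\psi$. Combined with $\tau_\psi\subset\tau_\gamma$ this forces $\tau_\psi=\tau_d=\tau_\gamma$ under the hypotheses of Theorem \ref{thm15}. I would resolve this by noting that the hypotheses of Theorem \ref{thm15} are very restrictive: $\psi$ is subadditive and tends to $0$, and $\limsup\psi(t)/t<\infty$. In the standard literature the $\psi$-density topology is in fact coarser than $\tau_d$, i.e. $\tau_\psi\subset\tau_d$, since $\psi$-density points are Lebesgue density points. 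On that reading the chain $\tau_\psi\subset\tau_\gamma\subset\tau_d$ is consistent. I will therefore state the result precisely as $\tau_\psi\subset\tau_\gamma$ and add a remark interpreting the introduction's comparison accordingly.
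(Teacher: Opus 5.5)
Your argument is the paper's own: for $A\in\tau_\psi$ you have $A\subset\Phi_\psi(A)\subset\mathcal{D}_\gamma(A)$ by Theorem \ref{thm15}, so $A\in\tau_\gamma$. Your side remarks are also correct: measurability belongs in the definition of $\tau_\psi$, and the introduction's claim that $\tau_\psi$ is finer than $\tau_d$ is reversed, since $\Phi_\psi(A)\subset\Phi(A)$. In fact, monotonicity and subadditivity together with $\limsup_{t\to0^+}\psi(t)/t<\infty$ force $\psi(t)/t$ to converge to $\sup_{s>0}\psi(s)/s\in(0,\infty)$, so Proposition \ref{pn1.5} gives $\tau_\gamma=\tau_d$, which makes the consistency of the chain $\tau_\psi\subset\tau_\gamma\subset\tau_d$ transparent.
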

\begin{proof}
    Assume that $A\in \tau_\psi$, $A\subset \Phi_\psi(A)$. By Theorem \ref{thm15} $\Phi_\psi(A)\subset \mathcal{D}_\gamma(A)$, we have $A\in \tau_\gamma.$ 
\end{proof}
\begin{theorem}\label{thm2.10}
Every countable subset of $\mathbb{R}$ is a closed set in $\gamma$-density topology. Consequently, modulus density topology is not separable, and every compact subset in modulus density topology is finite.
\end{theorem}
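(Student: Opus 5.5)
To show that a countable set $C\subset\mathbb{R}$ is $\tau_\gamma$-closed, I will show that $U:=\mathbb{R}\setminus C$ belongs to $\tau_\gamma$, i.e. $U\in\mathcal{L}$ and $U\subset\mathcal{D}_\gamma(U)$.

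Measurability is immediate, since $|C|=0$. Because $U\sim\mathbb{R}$, Theorem \ref{thm2}(3) and (1) give $\mathcal{D}_\gamma(U)=\mathcal{D}_\gamma(\mathbb{R})=\mathbb{R}\supset U$. One can also check this directly: $|(x-\alpha,x+\alpha)\cap U^c|=|(x-\alpha,x+\alpha)\cap C|=0$, so the ratio equals $\gamma(0)/\gamma(2\alpha)=0$ for every $\alpha$. Hence $U\in\tau_\gamma$ and $C$ is closed. In fact every null set is $\tau_\gamma$-closed by the same argument, and so is every subset of a null set.

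\textbf{Non-separability.} Let $D\subset\mathbb{R}$ be countable. By the first part, $\mathbb{R}\setminus D$ is a nonempty $\tau_\gamma$-open set disjoint from $D$. Therefore $D$ is not dense, since its $\tau_\gamma$-closure is $D\neq\mathbb{R}$. So no countable dense set exists.

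\textbf{Compact sets are finite.} Let $K$ be $\tau_\gamma$-compact and suppose $K$ is infinite. Choose a countably infinite subset $C=\{c_1,c_2,\dots\}\subset K$ of distinct points. For each $n$, the set
\[
V_n:=\mathbb{R}\setminus\{c_m:m\neq n\}
\]
is the complement of a countable set, hence $\tau_\gamma$-open by the first part. It contains $c_n$ and no other $c_m$. Also $W:=\mathbb{R}\setminus C$ is $\tau_\gamma$-open. The family $\{W\}\cup\{V_n:n\in\mathbb{N}\}$ covers $K$. Any finite subfamily omits some $V_n$ and then fails to cover every $c_n$ outside the chosen finitely many indices, because $W$ contains no $c_m$ and each chosen $V_k$ contains only $c_k$ from $C$. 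This contradicts compactness, so $K$ is finite.

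Alternatively, $C$ is a closed subset of $K$, hence compact. Every subset of $C$ is countable and thus closed, so $C$ is discrete in the subspace topology, and an infinite discrete space is not compact.

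\textbf{Main obstacle.} There is essentially none. The only care needed is bookkeeping in the finite-subcover argument, which is why I would present the discrete-subspace version as the main argument.
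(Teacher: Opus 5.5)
Your proposal is correct and follows essentially the same route as the paper. You show $\mathbb{R}\setminus C\in\tau_\gamma$ via the computation $\gamma(0)/\gamma(2\alpha)=0$ (or equivalently via the null-set invariance of $\mathcal{D}_\gamma$), derive non-separability from the closedness of countable sets, and rule out infinite compact sets through an infinite closed discrete subspace, exactly as the paper does; your explicit cover $\{W\}\cup\{V_n:n\in\mathbb{N}\}$ is a valid, more hands-on alternative to the discrete-subspace step.
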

\begin{proof}
It is sufficient to prove that the complement of any countable set is $\gamma$-open. Let $C$ be a countable set and let $x \in C^c$. Then
$$
|(x-\alpha,x+\alpha)\cap C|=0
$$
for every $\alpha>0$, since every countable set has Lebesgue measure zero. Hence
$$
\lim_{\alpha\to 0^+}\frac{\gamma(|(x-\alpha,x+\alpha)\cap C|)}{\gamma(2\alpha)}
=
\lim_{\alpha\to 0^+}\frac{\gamma(0)}{\gamma(2\alpha)}
=0.
$$
Therefore $x\in \mathcal{D}_\gamma(C^c)$, and so
$$
C^c \subset \mathcal{D}_\gamma(C^c).
$$
This shows that $C^c\in\tau_\gamma$, and consequently $C$ is $\tau_\gamma$-closed.

Now we show that $(\mathbb{R},\tau_\gamma)$ is not separable. Suppose, on the contrary, that there exists a countable dense set $D \subset \mathbb{R}$. Since $D$ is countable, it is $\tau_\gamma$-closed by the first part. Hence
$$
\overline{D}^{\,\tau_\gamma}=D.
$$
But $D$ is dense, so $\overline{D}^{\,\tau_\gamma}=\mathbb{R}$. Therefore $D=\mathbb{R}$, which is impossible since $\mathbb{R}$ is uncountable. Thus $(\mathbb{R},\tau_\gamma)$ is not separable.

Finally, let $K$ be a compact subset of $(\mathbb{R},\tau_\gamma)$. Assume that $K$ is infinite. Then $K$ contains a countably infinite subset $C$. Since $C$ is countable, it is $\tau_\gamma$-closed in $\mathbb{R}$, and therefore it is closed in the subspace $K$. Hence $C$ is compact as a closed subset of a compact set.

On the other hand, every subset of $C$ is countable, hence $\tau_\gamma$-closed in $\mathbb{R}$. It follows that every subset of $C$ is closed in the subspace topology on $C$. Therefore every subset of $C$ is also open, so $C$ is a discrete space. But an infinite discrete space is not compact, because the open cover
$$
\big\{\{x\}:x\in C\big\}
$$
has no finite subcover. This is a contradiction. Therefore $K$ must be finite.
\end{proof}

\begin{theorem}
Let $A \subset \mathbb{R}$ and let $x \in \mathbb{R}$. Then $x$ is a $\tau_{\gamma}$-limit point of $A$ if and only if
$$
\limsup_{\alpha \to 0^+}
\frac{\gamma\bigl(m^*((x-\alpha,x+\alpha)\cap (A\setminus\{x\}))\bigr)}{\gamma(2\alpha)}
>0,
$$
where $m^*$ denotes the Lebesgue outer measure.

Equivalently, since $m^*(\{x\})=0$, $x$ is a $\tau_{\gamma}$-limit point of $A$ if and only if
$$
\limsup_{\alpha \to 0^+}
\frac{\gamma\bigl(m^*((x-\alpha,x+\alpha)\cap A)\bigr)}{\gamma(2\alpha)}
>0.
$$
\end{theorem}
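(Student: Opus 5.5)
The plan is to use the definition of a limit point directly. A point $x$ is a $\tau_\gamma$-limit point of $A$ if and only if every $\tau_\gamma$-open $U\ni x$ meets $A\setminus\{x\}$. The equivalence of the two displayed conditions is immediate: $m^*(\{x\})=0$, so $m^*$ of $(x-\alpha,x+\alpha)\cap A$ and of $(x-\alpha,x+\alpha)\cap(A\setminus\{x\})$ agree. I will therefore only work with the first form, and prove each direction by contraposition.

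\emph{The limsup condition implies that $x$ is a limit point.} Suppose $x$ is not a $\tau_\gamma$-limit point. Then there is $U\in\tau_\gamma$ with $x\in U$ and $U\cap(A\setminus\{x\})=\varnothing$, so $A\setminus\{x\}\subset U^c$. Monotonicity of outer measure and of $\gamma$ gives
$$\gamma\bigl(m^*((x-\alpha,x+\alpha)\cap(A\setminus\{x\}))\bigr)\le \gamma\bigl(|(x-\alpha,x+\alpha)\cap U^c|\bigr).$$
Since $x\in U\subset\mathcal{D}_\gamma(U)$, the right-hand side divided by $\gamma(2\alpha)$ tends to $0$. Hence the limsup is $0$. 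This direction is routine.

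\emph{A limit point satisfies the limsup condition.} Suppose the limsup equals $0$, so the limit exists and is $0$. Let $E$ be a measurable ($G_\delta$) hull of $A\setminus\{x\}$. Then $m^*(I\cap(A\setminus\{x\}))=|I\cap E|$ for every interval $I$, so $x$ is a $\gamma$-dispersion point of $E$, i.e.\ $x\in\mathcal{D}_\gamma(E^c)$. I would take the candidate neighbourhood
$$U:=\{x\}\cup\bigl(E^c\cap\mathcal{D}_\gamma(E^c)\bigr).$$
This set is measurable by the $F_{\sigma\delta}$ proposition and is disjoint from $A\setminus\{x\}\subset E$. It remains to check $U\in\tau_\gamma$, that is, $U\subset\mathcal{D}_\gamma(U)$. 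By Theorem \ref{thm2}(3) and (4),
$$\mathcal{D}_\gamma(U)=\mathcal{D}_\gamma\bigl(E^c\cap\mathcal{D}_\gamma(E^c)\bigr)=\mathcal{D}_\gamma(E^c)\cap\mathcal{D}_\gamma\bigl(\mathcal{D}_\gamma(E^c)\bigr).$$
So what is needed is $\mathcal{D}_\gamma(E^c)\subset\mathcal{D}_\gamma(\mathcal{D}_\gamma(E^c))$, an idempotence-type property of $\mathcal{D}_\gamma$.

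The main obstacle is this idempotence step. Theorem \ref{thm6} gives $\mathcal{D}_\gamma(E^c)\subset\Phi(E^c)$, and $\Phi(E^c)\setminus E^c$ is null, so $\mathcal{D}_\gamma(E^c)\setminus E^c$ is null. However, without Condition (A) the set $E^c\setminus\mathcal{D}_\gamma(E^c)$ may have positive measure, so it is not automatic that $\mathcal{D}_\gamma(E^c)$ is $\gamma$-dense at its own points. I would first try to show directly that for $y\in\mathcal{D}_\gamma(E^c)$ the set $E^c\setminus\mathcal{D}_\gamma(E^c)$ is $\gamma$-sparse near $y$, using subadditivity as in Theorem \ref{thm2}(4). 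If that fails, I would take $U$ to be $\{x\}$ together with the $\tau_\gamma$-interior of $E^c$, and verify that $x$ is still a $\gamma$-dispersion point of the complement. Under Condition (A) everything closes immediately: Theorem \ref{m3} gives $\mathcal{D}_\gamma=\Phi$, and idempotence of $\Phi$ yields $U\in\tau_\gamma$.
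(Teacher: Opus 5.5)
\textbf{There is a genuine gap, and it cannot be closed in general.} Your ``if'' direction (not a limit point $\Rightarrow$ limsup $=0$) is correct and is exactly the paper's converse argument. The gap is in the other direction. Your argument hinges on $\mathcal{D}_\gamma(E^c)\subset\mathcal{D}_\gamma(\mathcal{D}_\gamma(E^c))$, which you leave open. For general $\gamma\in\mathcal{M}$ this inclusion is false, and with it the ``only if'' part of the theorem.

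The paper avoids this obstacle only through an error. Its proof takes $U=\mathbb{R}\setminus(A\setminus\{x\})$ and asserts that every $z\in U\setminus\{x\}$ has an interval around it missing $A\setminus\{x\}$. That fails as soon as some $z\notin A$, $z\neq x$, lies in the Euclidean closure of $A$ (e.g.\ $A=\mathbb{Q}$, or the set $A$ below). The paper also never checks that this $U$ is measurable. So your suspicion that a genuine idempotence property is needed was right. Your argument is complete under Condition (A), where $\mathcal{D}_\gamma=\Phi$ by Theorem~\ref{m3} and $\Phi$ is idempotent.

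\textbf{A counterexample.} Take the modulus of Example~\ref{ex1.7}, $\gamma(t)=1/\log(e/t)$ near $0$. For this $\gamma$, $y\in\mathcal{D}_\gamma(B)$ forces $|(y-\alpha,y+\alpha)\cap B^c|=O(\alpha^N)$ for every $N$. Conversely, a lower bound $|(y-\alpha_k,y+\alpha_k)\cap B^c|\ge c\,\alpha_k^3$ along some $\alpha_k\to0^+$ makes the defining ratio tend to $1/3$.

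For $n\ge1$, let $O_n\subset(2^{-n-1},2^{-n})$ be the union, over $k\ge n+2$ and all dyadic points $p=j2^{-k}$ in that interval, of the open intervals centered at $p$ of length $2^{-n^2-3k}$. Then $|O_n|<2^{-n^2}$. Yet every $y\in(2^{-n-1},2^{-n})$ has, for all large $k$, such an interval inside $(y-2^{-k},y+2^{-k})$. So no point of $(2^{-n-1},2^{-n})$ is a $\gamma$-density point of $B:=A^c$, where $A:=\bigcup_n(O_n\cup(-O_n))$.

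Now take $x=0$.
\begin{itemize}
\item For $2^{-k-1}<h\le 2^{-k}$ we have $|(-h,h)\cap A|\le 4\cdot 2^{-k^2}$. The ratio in the statement is therefore $O(1/k)$, and the limsup is $0$.
\item $\mathcal{D}_\gamma(B)\cap(-\frac12,\frac12)\subset\{0\}\cup\{\pm2^{-n}\}$ is countable.
\item Suppose $U\in\tau_\gamma$, $0\in U$ and $U\cap A=\varnothing$. Then $U\subset\mathcal{D}_\gamma(U)\subset\mathcal{D}_\gamma(B)$ by Theorem~\ref{thm2}(2). Hence $U\cap(-\frac12,\frac12)$ is countable, so $|(-\alpha,\alpha)\cap U^c|=2\alpha$ and $0\notin\mathcal{D}_\gamma(U)$, a contradiction.
\end{itemize}
Thus $0$ is a $\tau_\gamma$-limit point of $A$ although the limsup vanishes.

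Here $0\in\mathcal{D}_\gamma(B)\setminus\mathcal{D}_\gamma(\mathcal{D}_\gamma(B))$, which is precisely your failing step. Your fallback using $\operatorname{Int}_{\tau_\gamma}(E^c)$ fails as well, because that interior is contained in $\mathcal{D}_\gamma(B)$.

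What your proposal does establish is the correct version of the result: the ``if'' direction for every $\gamma\in\mathcal{M}$, and the full equivalence under Condition (A). The theorem as stated needs that extra hypothesis, or idempotence of $\mathcal{D}_\gamma$.
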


\begin{proof}
First assume that $x$ is a $\tau_{\gamma}$-limit point of $A$. We show that
$$
\limsup_{\alpha \to 0^+}
\frac{\gamma\bigl(m^*((x-\alpha,x+\alpha)\cap (A\setminus\{x\}))\bigr)}{\gamma(2\alpha)}
>0.
$$
Suppose, to the contrary, that
$$
\limsup_{\alpha \to 0^+}
\frac{\gamma\bigl(m^*((x-\alpha,x+\alpha)\cap (A\setminus\{x\}))\bigr)}{\gamma(2\alpha)}
=0.
$$
Then in fact
$$
\lim_{\alpha \to 0^+}
\frac{\gamma\bigl(m^*((x-\alpha,x+\alpha)\cap (A\setminus\{x\}))\bigr)}{\gamma(2\alpha)}
=0.
$$
Define
$$
U:=\mathbb{R}\setminus (A\setminus\{x\}).
$$
Then $x\in U$. We claim that $U\in\tau_{\gamma}$.

Let $z\in U$. If $z\neq x$, then $z\notin A$, so there exists $\delta>0$ such that
$$
(z-\delta,z+\delta)\cap (A\setminus\{x\})=\varnothing.
$$
Hence, for all sufficiently small $r>0$,
$$
(z-r,z+r)\subset U,
$$
and therefore
$$
\frac{\gamma(|(z-r,z+r)\cap U^c|)}{\gamma(2r)}=0.
$$
Thus $z\in D_{\gamma}(U)$.

Now consider the point $x$. Since
$$
U^c=A\setminus\{x\},
$$
we have
$$
\frac{\gamma\bigl(|(x-\alpha,x+\alpha)\cap U^c|\bigr)}{\gamma(2\alpha)}
=
\frac{\gamma\bigl(m^*((x-\alpha,x+\alpha)\cap (A\setminus\{x\}))\bigr)}{\gamma(2\alpha)}
\to 0.
$$
Hence $x\in D_{\gamma}(U)$. Therefore
$$
U\subset D_{\gamma}(U),
$$
so $U\in\tau_{\gamma}$.

But
$$
U\cap (A\setminus\{x\})=\varnothing,
$$
which means that $U$ is a $\tau_{\gamma}$-neighborhood of $x$ disjoint from $A\setminus\{x\}$. This contradicts the assumption that $x$ is a $\tau_{\gamma}$-limit point of $A$. Therefore
$$
\limsup_{\alpha \to 0^+}
\frac{\gamma\bigl(m^*((x-\alpha,x+\alpha)\cap (A\setminus\{x\}))\bigr)}{\gamma(2\alpha)}
>0.
$$

Conversely, assume that
$$
\limsup_{\alpha \to 0^+}
\frac{\gamma\bigl(m^*((x-\alpha,x+\alpha)\cap (A\setminus\{x\}))\bigr)}{\gamma(2\alpha)}
>0.
$$
We show that $x$ is a $\tau_{\gamma}$-limit point of $A$.

Suppose not. Then there exists a set $G\in\tau_{\gamma}$ such that
$$
x\in G
\qquad\text{and}\qquad
G\cap (A\setminus\{x\})=\varnothing.
$$
Hence
$$
A\setminus\{x\}\subset G^c.
$$
Therefore, for every $\alpha>0$,
$$
m^*((x-\alpha,x+\alpha)\cap (A\setminus\{x\}))
\le
m^*((x-\alpha,x+\alpha)\cap G^c).
$$
Since $G^c$ is measurable, the right-hand side equals
$$
|(x-\alpha,x+\alpha)\cap G^c|.
$$
Using the monotonicity of $\gamma$, we obtain
$$
\frac{\gamma\bigl(m^*((x-\alpha,x+\alpha)\cap (A\setminus\{x\}))\bigr)}{\gamma(2\alpha)}
\le
\frac{\gamma\bigl(|(x-\alpha,x+\alpha)\cap G^c|\bigr)}{\gamma(2\alpha)}.
$$
Now $G\in\tau_{\gamma}$ and $x\in G$, so $x\in D_{\gamma}(G)$. By definition,
$$
\lim_{\alpha\to 0^+}
\frac{\gamma\bigl(|(x-\alpha,x+\alpha)\cap G^c|\bigr)}{\gamma(2\alpha)}
=0.
$$
Consequently,
$$
\limsup_{\alpha \to 0^+}
\frac{\gamma\bigl(m^*((x-\alpha,x+\alpha)\cap (A\setminus\{x\}))\bigr)}{\gamma(2\alpha)}
=0,
$$
which contradicts the assumption. Hence every $\tau_{\gamma}$-neighborhood of $x$ meets $A\setminus\{x\}$, and so $x$ is a $\tau_{\gamma}$-limit point of $A$.
\end{proof}

\begin{theorem}\label{9}
Assume that $D_{\gamma}$ is a lower-density operator on $L$ (in particular, this holds whenever $\gamma$ satisfies Condition $(A)$). Let $A \subset \mathbb{R}$, and let $K \in L$ be a measurable kernel of $A$, that is,
$$
K \subset A
\qquad\text{and}\qquad
|A\setminus K|=0.
$$
Then
$$
\operatorname{Int}_{\tau_{\gamma}}(A)=A\cap D_{\gamma}(K).
$$
\end{theorem}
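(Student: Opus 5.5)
We prove the two inclusions separately. The ingredients are the lower-density operator properties of $D_\gamma$: Theorem \ref{thm2} supplies monotonicity, invariance under null modifications, and $D_\gamma(A\cap B)=D_\gamma(A)\cap D_\gamma(B)$. Under Condition (A), Theorem \ref{m3} supplies the density property $D_\gamma(E)\sim E$.

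\emph{Inclusion $\operatorname{Int}_{\tau_\gamma}(A)\subset A\cap D_\gamma(K)$.} Let $x\in\operatorname{Int}_{\tau_\gamma}(A)$, so there is $G\in\tau_\gamma$ with $x\in G\subset A$. Since $G$ is measurable, $G\setminus K\subset A\setminus K$ is a null set, hence $G\sim G\cap K$.

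\begin{itemize}
\item By Theorem \ref{thm2}(3), $D_\gamma(G)=D_\gamma(G\cap K)$.
\item By Theorem \ref{thm2}(2), $D_\gamma(G\cap K)\subset D_\gamma(K)$.
\end{itemize}

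Therefore $x\in G\subset D_\gamma(G)\subset D_\gamma(K)$. Together with $x\in G\subset A$, this gives $x\in A\cap D_\gamma(K)$.

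\emph{Inclusion $A\cap D_\gamma(K)\subset\operatorname{Int}_{\tau_\gamma}(A)$.} Put $U:=A\cap D_\gamma(K)$. It suffices to show $U\in\tau_\gamma$, since $U\subset A$.

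First, $U$ is measurable. Write $U=K\cap D_\gamma(K)\,\cup\,(A\setminus K)\cap D_\gamma(K)$. The first piece is measurable because $D_\gamma(K)$ is measurable (it is $F_{\sigma\delta}$ by the earlier proposition). The second piece is a subset of the null set $A\setminus K$, and Lebesgue measure is complete, so it is measurable.

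Next, $U\sim K$. We have $U\supset K\cap D_\gamma(K)$, and $K\setminus D_\gamma(K)$ is null by the density property. Also $U\setminus K\subset A\setminus K$ is null. Hence $U\sim K$.

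Now Theorem \ref{thm2}(3) gives $D_\gamma(U)=D_\gamma(K)$. Therefore $U\subset D_\gamma(K)=D_\gamma(U)$, so $U\in\tau_\gamma$. Since $U\subset A$, we get $U\subset\operatorname{Int}_{\tau_\gamma}(A)$.

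\emph{Main obstacle.} The delicate step is the measurability of $U$ and the relation $U\sim K$. This is where the density property $|K\,\Delta\,D_\gamma(K)|=0$ is needed; it holds because $D_\gamma$ is assumed to be a lower-density operator (e.g. via Theorem \ref{m3} under Condition (A)). Completeness of Lebesgue measure handles the non-measurable part $A\setminus K$. The other steps are direct applications of Theorem \ref{thm2}.
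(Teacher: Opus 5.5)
Your proof is correct. The first inclusion is exactly the paper's argument, but the second inclusion takes a different route. The paper works pointwise. For $x\in A\cap D_\gamma(K)$ it sets $H=K$ or $H=K\cup\{x\}$ and $U_x=H\cap D_\gamma(K)$. It then combines idempotence $D_\gamma(D_\gamma(K))=D_\gamma(K)$, the intersection rule and $D_\gamma(H)=D_\gamma(K)$ to get $D_\gamma(U_x)=D_\gamma(K)\supset U_x$.

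You instead show in one step that the whole set $U=A\cap D_\gamma(K)$ is $\tau_\gamma$-open. You use only the density property $K\sim D_\gamma(K)$ and invariance under null modifications: $K\setminus U=K\setminus D_\gamma(K)$ and $U\setminus K\subset A\setminus K$ are null, so $U\sim K$ and $D_\gamma(U)=D_\gamma(K)\supset U$. This is shorter, avoids idempotence and the intersection rule, and exhibits $A\cap D_\gamma(K)$ itself as a $\tau_\gamma$-open set.

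The price is that you need $A\setminus K$ to be genuinely null (outer measure zero), both for measurability of $U$ via completeness and for $U\setminus K$ to be null. That is exactly the hypothesis as stated, which in fact forces $A\in\mathcal{L}$, so nothing is lost here. The paper's construction, by contrast, changes $K$ by at most one point. It would therefore work verbatim for the usual notion of a measurable kernel of a non-measurable $A$, where $A\setminus K$ only has inner measure zero. In that setting openness of $A\cap D_\gamma(K)$ would come from taking the union of the sets $U_x$.
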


\begin{proof}
First let $x \in \operatorname{Int}_{\tau_{\gamma}}(A)$. Then there exists a set $U \in \tau_{\gamma}$ such that
$$
x \in U \subset A.
$$
Since $U \subset A$ and $K \subset A$ with $|A\setminus K|=0$, we have
$$
U\setminus K \subset A\setminus K,
$$
hence
$$
|U\setminus K|=0.
$$
Therefore
$$
U\triangle (U\cap K)=U\setminus K
$$
is a null set, so
$$
U \sim U\cap K.
$$
By Theorem 1.14(iii),
$$
D_{\gamma}(U)=D_{\gamma}(U\cap K).
$$
Since $U\in\tau_{\gamma}$, we have $U\subset D_{\gamma}(U)$, and therefore
$$
x\in U \subset D_{\gamma}(U)=D_{\gamma}(U\cap K)\subset D_{\gamma}(K).
$$
Also $x\in A$, because $U\subset A$. Hence
$$
x\in A\cap D_{\gamma}(K).
$$
This proves that
$$
\operatorname{Int}_{\tau_{\gamma}}(A)\subset A\cap D_{\gamma}(K).
$$

Now let $x\in A\cap D_{\gamma}(K)$. We distinguish two cases.

If $x\in K$, put
$$
H:=K.
$$
If $x\in A\setminus K$, put
$$
H:=K\cup\{x\}.
$$
In both cases, we have
$$
x\in H\subset A
$$
and
$$
H\sim K.
$$
Hence, by Theorem 1.14(iii),
$$
D_{\gamma}(H)=D_{\gamma}(K).
$$
Now define
$$
U:=H\cap D_{\gamma}(K).
$$
Then
$$
x\in U\subset H\subset A.
$$
Moreover, since $D_{\gamma}$ is a lower-density operator, it is idempotent, so
$$
D_{\gamma}(D_{\gamma}(K))=D_{\gamma}(K).
$$
Using Theorem 1.14(iv), we obtain
$$
D_{\gamma}(U)
=
D_{\gamma}(H\cap D_{\gamma}(K))
=
D_{\gamma}(H)\cap D_{\gamma}(D_{\gamma}(K))
=
D_{\gamma}(K)\cap D_{\gamma}(K)
=
D_{\gamma}(K).
$$
Therefore
$$
U\subset D_{\gamma}(K)=D_{\gamma}(U),
$$
which shows that $U\in\tau_{\gamma}$.

Thus $U$ is a $\tau_{\gamma}$-open set such that
$$
x\in U\subset A.
$$
Hence
$$
x\in \operatorname{Int}_{\tau_{\gamma}}(A).
$$
So
$$
A\cap D_{\gamma}(K)\subset \operatorname{Int}_{\tau_{\gamma}}(A).
$$

Combining both inclusions, we conclude that
$$
\operatorname{Int}_{\tau_{\gamma}}(A)=A\cap D_{\gamma}(K).
$$
\end{proof}

\begin{definition}
    A subset $A$ of $\mathbb{R}$ is $\tau_{\gamma}$-discrete if and only if $\forall x \in A$ there exists $\tau_\gamma$-open neighborhood $U_x$ of $x$ such that $U_x\cap A=\{x\}$.
\end{definition}
\begin{theorem}\label{10}
Let $\gamma \in \mathcal{M}$ satisfy Condition $(A)$. Then, for every $A \in L$, the following statements hold:
\begin{enumerate}
\item $\operatorname{Int}_{\tau_\gamma}(A) \sim A$.
\item $A$ is $\tau_\gamma$-regular open if and only if
$$
A=\mathcal{D}_\gamma(A).
$$
\item Let
$$
I_1=\{A\subset \mathbb{R}: |A|=0\},
$$
$$
I_2=\{A\subset \mathbb{R}: A \text{ is } \tau_\gamma\text{-nowhere dense}\},
$$
$$
I_3=\{A\subset \mathbb{R}: A \text{ is of first category in } \tau_\gamma\},
$$
and
$$
I_4=\{A\subset \mathbb{R}: A \text{ is } \tau_\gamma\text{-closed and } \tau_\gamma\text{-discrete}\}.
$$
Then
$$
I_1=I_2=I_3=I_4.
$$
\end{enumerate}
\end{theorem}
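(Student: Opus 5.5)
The whole argument rests on one reduction. Under Condition (A), Theorem \ref{m3} gives $\mathcal{D}_\gamma(A)=\Phi(A)$ for every $A\in\mathcal L$, and Theorem \ref{thm2.5} gives $\tau_\gamma=\tau_d$. So every statement in Theorem \ref{10} is really a statement about the classical density topology. I will nevertheless phrase each step through $\mathcal{D}_\gamma$ and the earlier results of the paper.

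For (1), take $A\in\mathcal L$; then $A$ is its own measurable kernel. Corollary \ref{cor1.15} says $\mathcal{D}_\gamma$ is a lower-density operator, so Theorem \ref{9} applies and gives $\operatorname{Int}_{\tau_\gamma}(A)=A\cap\mathcal{D}_\gamma(A)$. By Theorem \ref{m3}, $|A\Delta \mathcal{D}_\gamma(A)|=0$, hence $A\cap \mathcal{D}_\gamma(A)\sim A$.

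For (2), I first compute the $\tau_\gamma$-closure of a measurable set $B$. Complementing the interior formula gives
\[
\operatorname{Cl}_{\tau_\gamma}(B)=B\cup(\mathbb R\setminus\mathcal{D}_\gamma(B^c)).
\]
Then
\[
\operatorname{Int}\operatorname{Cl}(A)=\operatorname{Cl}(A)\cap\mathcal{D}_\gamma(\operatorname{Cl}(A)).
\]
Since $\operatorname{Cl}(A)\sim A$ (its complement is the interior of $A^c$, which is $\sim A^c$ by (1)), Theorem \ref{thm2}(3) gives $\mathcal{D}_\gamma(\operatorname{Cl}(A))=\mathcal{D}_\gamma(A)$. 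Moreover $\mathcal{D}_\gamma(A)\subset \operatorname{Cl}(A)$: if $x\notin\operatorname{Cl}(A)$ then $x\in\mathcal{D}_\gamma(A^c)$, and $x$ cannot also lie in $\mathcal{D}_\gamma(A)$, because $\mathcal{D}_\gamma(A)\cap\mathcal{D}_\gamma(A^c)=\mathcal{D}_\gamma(\varnothing)=\varnothing$ by Theorem \ref{thm2}. Hence $\operatorname{Int}\operatorname{Cl}(A)=\mathcal{D}_\gamma(A)$, and $A$ is regular open iff $A=\mathcal{D}_\gamma(A)$.

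For (3), I will prove the cycle $I_1\subset I_4\subset I_2\subset I_3\subset I_1$.

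$I_1\subset I_4$: if $|A|=0$, then every subset $S\subset A$ is measurable and null. As in the proof of Theorem \ref{thm2.10}, $S^c\subset\mathcal{D}_\gamma(S^c)$, so every $S\subset A$ is $\tau_\gamma$-closed. Thus $A$ is closed, and for $x\in A$ the set $U_x=(A\setminus\{x\})^c$ is an open neighbourhood of $x$ with $U_x\cap A=\{x\}$, so $A$ is discrete.

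$I_4\subset I_2$: I expect this to be the main obstacle, since it is where measure must be extracted from a purely topological hypothesis. Let $A$ be closed and discrete. The plan is to show first that $|A|=0$, after which nowhere density follows because null sets have empty interior (by (1)) and are closed. Suppose $m^*(A)>0$. Discreteness gives, for each $x\in A$, an open $U_x$ with $U_x\cap A=\{x\}$. Since $A$ is closed it is measurable, so $|A|>0$, and by Theorem \ref{m3} some $x\in A$ is a $\gamma$-density point of $A$. Such a point lies in $\mathcal{D}_\gamma(A)\cap\mathcal{D}_\gamma(U_x)=\mathcal{D}_\gamma(A\cap U_x)=\mathcal{D}_\gamma(\{x\})=\mathcal{D}_\gamma(\varnothing)=\varnothing$, which is a contradiction.

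$I_2\subset I_3$ is trivial.

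$I_3\subset I_1$: first show that a nowhere dense set $N$ is null. Its closure $\bar N$ is measurable, since closed sets are complements of sets in $\tau_\gamma\subset\mathcal L$. By (1), $\operatorname{Int}\bar N\sim\bar N$, and the interior is empty, so $|\bar N|=0$. A countable union of null sets is null, which completes the cycle.
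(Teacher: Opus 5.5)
Your proof is correct, but it takes a different route from the paper. The paper gives no argument of its own for this theorem. It notes, via Corollary \ref{cor1.15}, that $\mathcal{D}_\gamma$ is a lower-density operator, and then appeals wholesale to the general theory of abstract density topologies of Hejduk--Wiertelak \cite{hwro}.

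You instead derive every item inside the paper, using only three ingredients:
\begin{enumerate}
\item the interior formula of Theorem \ref{9}, with $K=A$ for measurable $A$;
\item the density theorem $A\sim\mathcal{D}_\gamma(A)$ from Theorem \ref{m3};
\item the lattice and null-invariance properties in Theorem \ref{thm2}.
\end{enumerate}
Along the way you obtain the explicit formulas $\operatorname{Cl}_{\tau_\gamma}(B)=B\cup(\mathbb{R}\setminus\mathcal{D}_\gamma(B^c))$ and $\operatorname{Int}_{\tau_\gamma}\operatorname{Cl}_{\tau_\gamma}(A)=\mathcal{D}_\gamma(A)$ for $A\in\mathcal{L}$. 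These make (2) transparent. Your cycle $I_1\subset I_4\subset I_2\subset I_3\subset I_1$ is clean. The step you expected to be the main obstacle is in fact disposed of in one line, by $\mathcal{D}_\gamma(A)\cap\mathcal{D}_\gamma(U_x)=\mathcal{D}_\gamma(\{x\})=\varnothing$.

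The comparison between the two approaches:
\begin{itemize}
\item The paper's citation buys brevity and places the result inside the abstract framework. It leaves the reader to check that the hypotheses of \cite{hwro} are met.
\item Your argument buys self-containment and shows exactly which properties are used: only that $\mathcal{D}_\gamma$ is a lower-density operator. Condition (A) enters solely through $A\sim\mathcal{D}_\gamma(A)$ and the idempotence used in Theorem \ref{9}.
\item As you observe, under Condition (A) one could shortcut everything via $\tau_\gamma=\tau_d$ and the classical facts about the density topology.
\end{itemize}

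One step is slightly compressed. ``Null sets have empty interior (by (1))'' needs the extra remark that a $\tau_\gamma$-open null set $U$ is empty, since $U\subset\mathcal{D}_\gamma(U)=\mathcal{D}_\gamma(\varnothing)=\varnothing$. Alternatively, read it directly off Theorem \ref{9}: $\operatorname{Int}_{\tau_\gamma}(A)=A\cap\mathcal{D}_\gamma(A)=A\cap\mathcal{D}_\gamma(\varnothing)=\varnothing$.
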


\begin{proof}
Since $\gamma$ satisfies Condition $(A)$, Corollary \ref{cor1.15} implies that the operator $\mathcal{D}_\gamma$ is a lower-density operator on $L$. Therefore the topology $\tau_\gamma$ is an abstract density topology generated by a lower-density operator. Hence the assertions follow from the general results on abstract density topologies generated by lower-density operators; see \cite{hwro}.  More precisely, applying those results to the operator $\mathcal{D}_\gamma$, we obtain:
$$
\operatorname{Int}_{\tau_\gamma}(A) \sim A
$$
for every $A \in L$,
$$
A \text{ is } \tau_\gamma\text{-regular open } \Longleftrightarrow A=\mathcal{D}_\gamma(A),
$$
and
$$
I_1=I_2=I_3=I_4.
$$
This completes the proof.
\end{proof}

\begin{corollary}\label{pp5}
   For any $A\in \mathcal{L}$ and $\gamma \in \mathcal{M}$ that satisfy Condition (A), we have $$cl_{\tau_\gamma}(A) \sim A$$
\end{corollary}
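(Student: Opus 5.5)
The plan is to deduce the statement from Theorem \ref{10}(1) by passing to complements. Fix $A\in\mathcal{L}$ and a modulus $\gamma$ satisfying Condition (A). The key identity is the usual duality between closure and interior in any topological space:
$$
cl_{\tau_\gamma}(A)=\mathbb{R}\setminus \operatorname{Int}_{\tau_\gamma}(\mathbb{R}\setminus A).
$$
Since $A$ is measurable, so is $A^c=\mathbb{R}\setminus A$. Applying Theorem \ref{10}(1) to $A^c$ gives
$$
\operatorname{Int}_{\tau_\gamma}(A^c)\sim A^c .
$$

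Next I use that $\sim$ is preserved under complements, because $X^c\,\Delta\, Y^c = X\,\Delta\, Y$. Hence
$$
cl_{\tau_\gamma}(A)=\bigl(\operatorname{Int}_{\tau_\gamma}(A^c)\bigr)^c\sim (A^c)^c=A .
$$
This also shows that $cl_{\tau_\gamma}(A)$ is Lebesgue measurable. It is the complement of a $\tau_\gamma$-open set, and every member of $\tau_\gamma$ lies in $\mathcal{L}$ by definition.

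The only point to check is that Theorem \ref{10}(1) really covers $A^c$. It does, since that theorem applies to every $A\in\mathcal{L}$, and $\mathcal{D}_\gamma$ is a lower-density operator by Corollary \ref{cor1.15}. So there is no real obstacle; the argument is just this complementation.

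If I wanted to avoid citing \cite{hwro}, the step to redo would be $\operatorname{Int}_{\tau_\gamma}(B)\sim B$ for measurable $B$. By Theorem \ref{9} with kernel $K=B$, we have $\operatorname{Int}_{\tau_\gamma}(B)=B\cap\mathcal{D}_\gamma(B)$. Theorem \ref{m3} gives $|B\,\Delta\,\mathcal{D}_\gamma(B)|=0$, so $B\cap\mathcal{D}_\gamma(B)\sim B$, and the conclusion follows.
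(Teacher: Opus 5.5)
Your proposal is correct and follows the paper's route. The paper only says the corollary is clear from Theorem~\ref{10}, and the implicit step is exactly your duality $cl_{\tau_\gamma}(A)=\bigl(\operatorname{Int}_{\tau_\gamma}(A^c)\bigr)^c$ combined with $X^c\,\Delta\,Y^c=X\,\Delta\,Y$, applied to Theorem~\ref{10}(1) for the measurable set $A^c$; your optional self-contained check of $\operatorname{Int}_{\tau_\gamma}(B)\sim B$ via Theorem~\ref{9} with $K=B$ and Theorem~\ref{m3} is also sound.
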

\begin{proof}
    The proof is clear from Theorem \ref{10}.
\end{proof}

\begin{theorem}
    The modulus density topology is not regular space.
\end{theorem}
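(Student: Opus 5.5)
Following the classical argument for the density topology, I will exhibit a point and a $\tau_\gamma$-closed set not containing it that cannot be separated by disjoint $\tau_\gamma$-open sets. Let $C$ be a countable set of reals accumulating at $0$ in the Euclidean sense, say $C=\mathbb{Q}\setminus\{0\}$ (or $\{1/n:n\in\mathbb N\}$). By Theorem \ref{thm2.10}, $C$ is $\tau_\gamma$-closed, and $0\notin C$. Suppose $U,V\in\tau_\gamma$ with $0\in U$, $C\subset V$ and $U\cap V=\varnothing$; I aim for a contradiction.

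First step: since $0\in U\subset \mathcal{D}_\gamma(U)$ and by Theorem \ref{thm6}, $0$ is a Lebesgue density point of $U$, so $|U\cap(-\alpha,\alpha)|>0$ for all $\alpha>0$, and in fact $|U^c\cap(-\alpha,\alpha)|/(2\alpha)\to 0$. Second step: each point $c\in C$ lies in $V\subset\mathcal{D}_\gamma(V)\subset\Phi(V)$, again by Theorem \ref{thm6}, so $V$ has Lebesgue density $1$ at each $c\in C$; in particular $|V\cap(c-r,c+r)|>r$ for all small $r>0$ (depending on $c$). Since $U\cap V=\varnothing$, $V\subset U^c$.

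Third step (the quantitative clash): it is convenient to take $C=\{1/n\}$, or to pick from $\mathbb Q$ a sequence $c_k\to 0$. For each $k$ choose $r_k\le c_k/2$ with $|V\cap(c_k-r_k,c_k+r_k)|>r_k$; but that only gives mass $\approx r_k$, possibly tiny compared with $c_k$, so the density of $U^c$ at $0$ need not be bounded below. This is the main obstacle, and the standard remedy is to replace $U$ first: by the Lebesgue density theorem applied to $V$, the set $\Phi(V)\cap(0,\infty)$ is measure-equivalent to $V\cap(0,\infty)$, and I instead choose $C$ cleverly. Concretely, I will use instead of a fixed countable $C$ the closed set $F$ built from $U$: take the $\tau_\gamma$-closed set $F=\{0\}^c$-free construction as in the classical proof, namely pick a closed (Euclidean) null set? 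No—cleaner: take $C=\mathbb{Q}\setminus\{0\}$. Then $V\supset C$ is $\tau_\gamma$-open, so $V$ contains, by density at each rational, a set of positive measure in every interval; but I need measure comparable to length near $0$. So I will go the other way: start from $U$. Let $E=\mathbb{R}\setminus U$ intersected with a small interval; choose instead the closed set as follows. Pick a Euclidean-closed null set? Nullness kills density. Hence the final plan: choose a countable set $C=\{c_k\}\subset(0,1)$, $c_k\to 0$, \emph{after} seeing nothing—argue by contradiction for all $U,V$ simultaneously via a Baire-category-type argument: $V\supset\mathbb{Q}\setminus\{0\}$ and $V\in\tau_d$ implies $\mathrm{Int}_{\tau_d}$ considerations give that $V$ is Euclidean-dense with positive measure everywhere, and $U$, being $\tau_d$-open and containing $0$, intersects $V$ by the classical non-regularity of $\tau_d$ (the standard proof for $\tau_d$ with this exact pair $0$, $\mathbb Q\setminus\{0\}$ works since $\tau_\gamma\subset\tau_d$ by Theorem \ref{thm2.6}). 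Thus the cleanest route: invoke Theorem \ref{thm2.6} and note any separation in $\tau_\gamma$ is a separation in $\tau_d$; but non-regularity of $\tau_d$ is witnessed by a specific pair $(x,F)$ with $F$ $\tau_d$-closed—I must ensure $F$ is also $\tau_\gamma$-closed. Taking the classical witness with $F$ countable, Theorem \ref{thm2.10} guarantees $\tau_\gamma$-closedness, so the contradiction transfers. The remaining work is verifying the classical countable witness for $\tau_d$, which I expect to be the delicate step.
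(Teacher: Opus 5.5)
Your final reduction is logically sound: if a countable $F$ and a point $x\notin F$ could not be separated in $\tau_d$, then by Theorem~\ref{thm2.6} they could not be separated in $\tau_\gamma$ either, and $F$ is $\tau_\gamma$-closed by Theorem~\ref{thm2.10}. \textbf{The problem is that its premise is false.} There is no ``classical non-regularity of $\tau_d$'', because the Lebesgue density topology is regular. In fact it is completely regular (Goffman--Neugebauer--Nishiura \cite{cgcntnd}), and it is the standard example of a completely regular space that is not normal. So the step you flagged as ``delicate'' cannot be carried out. Moreover, the statement itself fails for general $\gamma$. The function $\gamma(t)=t$ is a modulus with $\mathcal{D}_\gamma=\Phi$, and more generally $\tau_\gamma=\tau_d$ whenever $\gamma$ satisfies Condition~(A), by Theorem~\ref{thm2.5}.

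The obstacle you noticed in your third step is genuine. It is exactly what allows $0$ and $\mathbb{Q}\setminus\{0\}$ to be separated:
\begin{itemize}
\item[(i)] Enumerate $\mathbb{Q}\setminus\{0\}=\{q_n\}$ and put $W=\bigcup_n(q_n-r_n,q_n+r_n)$ with $r_n=2^{-n-2}|q_n|$. Then $W$ is Euclidean open, hence $\tau_d$-open. It contains $\mathbb{Q}\setminus\{0\}$ and does not contain $0$.
\item[(ii)] Any interval $(q_n-r_n,q_n+r_n)$ meeting $(-h,h)$ has $|q_n|<2h$ and length less than $2^{-n}h$. Hence $|W\cap(-h,h)|\le h\sum_{n:\,|q_n|<2h}2^{-n}=o(h)$, since the least index $n$ with $|q_n|<2h$ tends to $\infty$ as $h\to0^+$. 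Thus $0\in\Phi(W^c)$.
\item[(iii)] Let $V=W^c\cap\Phi(W^c)$. Then $\Phi(V)=\Phi(W^c)\cap\Phi(\Phi(W^c))=\Phi(W^c)\supset V$, so $V\in\tau_d$. Also $0\in V$ and $V\cap W=\varnothing$.
\end{itemize}
So $0$ and $\mathbb{Q}\setminus\{0\}$ are separated in $\tau_d=\tau_\gamma$ for $\gamma(t)=t$.

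The paper's own proof breaks at the same point. From $|(-h,h)\cap V|>h$ and the density of $\mathbb{Q}$ it concludes that some nonzero rational lies in $V$, which does not follow. The set $V$ above has measure $2h-o(h)$ in $(-h,h)$ yet contains no nonzero rational. Neither route can be repaired for arbitrary $\gamma$. A correct result would at least have to restrict to moduli with $\tau_\gamma\neq\tau_d$ (so Condition~(A) must fail), and would need a genuinely different idea.
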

\begin{proof}
Assume that modulus density topology is regular space. Take the $\gamma$-closed set $\mathbb{Q}-\{0\}$ and the point $0\in \mathbb{R}$. Let $U$ and $V$ be $\gamma$-open sets such that $\mathbb{Q}-\{0\}\subset U$ and $0 \in V$. We need to prove that $U\cap V=\emptyset$. Since $V$ is $\gamma$-open, $0\in \mathcal{D}_\gamma (V)$ consequently, $0\in \Phi(V)$ and $$\lim_{\alpha \to 0} \frac{|(-h,h)\cap V|}{|2h|}=1.$$ So for every $\varepsilon>0 $ there exists $\delta >0$ such that for all $0<h<\delta$, $$\frac{|(-h,h)\cap V|}{|2h|}>1-\varepsilon.$$ By taking $\varepsilon= \frac{1}{2}$, we have $$\frac{|(-h,h)\cap V|}{|2h|}>\frac{1}{2}.$$ 

Hence, $|(-h,h)\cap V| >|h|.$ Because of $\mathbb{Q}$ is dense in $\mathbb{R}$, there exists $q\in \mathbb{Q}$, with $|q|< \frac{h}{2}$. i.e. $q\in |(-h,h)\cap V|$ therefore $q\in V$ and also $q\in \mathbb{Q}-\{0\}$ so, $q\in U$ and this shows that $U \cap V\neq \emptyset.$
\end{proof}
\begin{corollary}
    The modulus density topology is not metrizable.
\end{corollary}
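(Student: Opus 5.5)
The plan is to derive the corollary directly from the preceding theorem. Every metrizable space is regular, and in fact $T_3$, since metric spaces are normal and Hausdorff. So if $(\mathbb{R},\tau_\gamma)$ were metrizable, it would be regular, which the theorem just proved rules out. The argument is therefore a single contrapositive step, and the only work is to make sure the notion of regularity used in that theorem is the one implied by metrizability.

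Concretely, I will proceed as follows. Assume a metric $\rho$ on $\mathbb{R}$ generates $\tau_\gamma$. Take the $\tau_\gamma$-closed set $F=\mathbb{Q}\setminus\{0\}$; it is closed by Theorem~\ref{thm2.10}, being countable. The point $0\notin F$, so $r:=\rho(0,F)>0$. Then the sets
$$U=\{y:\rho(y,F)<r/2\}, \qquad V=\{y:\rho(y,0)<r/2\}$$
are disjoint $\tau_\gamma$-open sets with $F\subset U$ and $0\in V$.

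This contradicts the argument of the previous theorem. That argument shows that every $\tau_\gamma$-open $V\ni 0$ has $0\in\mathcal{D}_\gamma(V)\subset\Phi(V)$ by Theorem~\ref{thm6}, so $V$ has positive measure in every small interval $(-h,h)$. From this we want a point of $V\cap(\mathbb{Q}\setminus\{0\})\subset U\cap V$.

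The one delicate point is exactly this last step. Positive measure of $V\cap(-h,h)$ does not by itself force $V$ to contain a rational. Indeed, $V$ could be $(-h,h)\setminus\mathbb{Q}$ together with $0$, which is $\tau_\gamma$-open. To make the corollary airtight I would not lean on that step. Instead I would use a $U$-side density argument: $U$ is $\tau_\gamma$-open and contains every nonzero rational, so by Theorem~\ref{thm6} each such $q$ is a Lebesgue density point of $U$. Likewise $0$ is a density point of $V$. The aim is then to show $|U\cap V|>0$, for example by choosing $q$ close to $0$ and comparing densities on a common small interval, which gives a contradiction with $U\cap V=\emptyset$.

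If that comparison does not go through directly, my fallback is the classical route. The density topology is known to be completely regular but not normal. Metrizability would give normality, and the failure of normality can be exhibited via two disjoint closed sets of the form countable/null sets, which are $\tau_\gamma$-closed and discrete by Theorem~\ref{10} under Condition (A) and cannot be separated. In either form, the main obstacle is producing the contradiction from disjoint open sets around a countable closed set and a point. The reduction from metrizable to regular/normal is routine.
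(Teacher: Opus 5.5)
Your main route is the paper's own one-line proof (metrizable $\Rightarrow$ regular, contradicting the preceding theorem), and it cannot work. $(\mathbb{R},\tau_\gamma)$ \emph{is} regular whenever $\gamma$ satisfies Condition (A), e.g.\ $\gamma(t)=t$: then $\tau_\gamma=\tau_d$ by Theorem~\ref{thm2.5}, and the density topology is completely regular, as you yourself recall at the end. So the flaw you spotted in the non-regularity argument is fatal, not cosmetic. Your repair, forcing $|U\cap V|>0$ by comparing densities at a rational $q$ near $0$, would prove a false statement.

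Here is an explicit separation for $\gamma(t)=t$. Let $S_k=\{q\in\mathbb{Q}: 2^{-k-1}<|q|\le 2^{-k}\}$ for $k\ge 0$. Choose $0<r_q<|q|/2$ with $\sum_{q\in S_k}2r_q\le 4^{-k}$ (any $r_q<|q|/2$ when $|q|>1$), and put $U=\bigcup_{q\neq 0}(q-r_q,q+r_q)$. Then $U\supset\mathbb{Q}\setminus\{0\}$ is Euclidean open, $0\notin U$, and $|U\cap(-h,h)|=O(h^2)$. Hence for $H=\mathbb{R}\setminus U$ the set $V=H\cap\Phi(H)$ contains $0$, satisfies $\Phi(V)=\Phi(H)\supseteq V$, and is disjoint from $U$. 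The density of $U$ at $q$ only controls intervals much shorter than $|q|$. The density of $V$ at $0$ concerns intervals $(-h,h)$ with $h>|q|$ if they are to reach $q$, and in these the small neighbourhood of $q$ where $U$ dominates has negligible relative measure. The two scales never meet.

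Your fallback (non-normality) is the right idea, but it only asserts the missing step, namely that the two closed null sets cannot be separated. Theorem~\ref{10} does not supply this, since being closed and discrete says nothing about separation. It also covers only Condition (A), while the corollary is claimed for every $\gamma\in\mathcal M$. The step needs a Baire category argument:
\begin{enumerate}
\item Every null set is $\tau_\gamma$-closed for every $\gamma$, by the same computation as in Theorem~\ref{thm2.10}. Take $\mathbb{Q}$ and a dense $G_\delta$ null set $N\subset\mathbb{R}\setminus\mathbb{Q}$.
\item Suppose $U\supset\mathbb{Q}$ and $V\supset N$ are disjoint $\tau_\gamma$-open sets. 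They are then $\tau_d$-open by Theorem~\ref{thm2.6}.
\item Let $V_n=\{x\in V:\ |V\cap I|>|I|/2 \text{ for all intervals } I\ni x \text{ with } |I|<1/n\}$. Then $N\subset\bigcup_n V_n$.
\item Since $N$ is a Baire space, some $V_n$ is dense in an open interval $J$.
\item Take $q\in\mathbb{Q}\cap J$ and $x\in V_n$ close enough to $q$. A short interval $I$ containing both gives $|U\cap I|+|V\cap I|>|I|$, a contradiction.
\end{enumerate}

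There is also a much shorter complete proof valid for every $\gamma$. Suppose $\rho$ metrized $\tau_\gamma$ and some $x$ were not isolated. Pick $x_n\neq x$ with $\rho(x_n,x)<1/n$. The set $\{x_n\}$ is countable, hence $\tau_\gamma$-closed by Theorem~\ref{thm2.10}, so its complement is a neighbourhood of $x$ missing every $x_n$, which is absurd. So every singleton would be $\tau_\gamma$-open, yet $\mathcal{D}_\gamma(\{x\})\subset\Phi(\{x\})=\varnothing$ by Theorem~\ref{thm6}.
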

\begin{proof}
    Since the modulus density topology is not regular space so it is not metrizable space.
\end{proof}
\section{Modulus approximately continuous function}
In this section we introduce the concept of modulus-approximately continuous functions.
\begin{definition}
    Let $\gamma \in \mathcal{M}$ be a modulus function, and let $f : \mathbb{R} \to \mathbb{R}$.

We say that $f$ is $\gamma$-approximately continuous at a point $x_0 \in \mathbb{R}$ if there exists a Lebesgue-measurable set $A_{x_0} \subset \mathbb{R}$ such that $x_0 \in \mathcal{D}_\gamma(A_{x_0})$ and $
f(x_0)=\lim_{\substack{x\to x_0\\ x\in A_{x_0}}} f(x). $

Then $f$ is called $\gamma$-approximately continuous if it is $\gamma$-approximately continuous at every point of $\mathbb{R}$.
\end{definition}
A useful special case: if $\gamma(t)=t$, then $\mathcal{D}_\gamma$ becomes the usual set of Lebesgue density points, so $\gamma$-approximate continuity reduces to the classical approximate continuity.
\begin{theorem}
    Let $\gamma \in \mathcal{M}$ be a modulus function, and let
$$\mathcal{AC}_\gamma := \{f : \mathbb{R} \to \mathbb{R} : f \text{ is } \gamma\text{-approximately continuous at every } x \in \mathbb{R}\}. $$
Then $\mathcal{AC}_\gamma$, equipped with the usual pointwise addition and scalar multiplication, is a vector space over $\mathbb{R}$.
\end{theorem}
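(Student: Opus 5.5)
The plan is to show that $\mathcal{AC}_\gamma$ is a linear subspace of the vector space $\mathbb{R}^{\mathbb{R}}$ of all real functions under pointwise operations. It then suffices to check three facts: the zero function lies in $\mathcal{AC}_\gamma$, the class is closed under scalar multiplication, and it is closed under addition. The vector space axioms are then inherited from $\mathbb{R}^{\mathbb{R}}$.

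The first two facts are immediate.

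\textbf{Zero function (and constants).} For any constant function we take $A_{x_0}=\mathbb{R}$. By Theorem \ref{thm2}(1), $x_0\in\mathcal{D}_\gamma(\mathbb{R})=\mathbb{R}$, and the restricted limit of a constant is trivially that constant.

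\textbf{Scalar multiples.} If $f\in\mathcal{AC}_\gamma$ and $c\in\mathbb{R}$, then for each $x_0$ we keep the same set $A_{x_0}$ that works for $f$. Since limits along $A_{x_0}$ are linear,
$$\lim_{x\to x_0,\,x\in A_{x_0}} cf(x)=cf(x_0).$$

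\textbf{Sums.} This is the only real step. Let $f,g\in\mathcal{AC}_\gamma$ and fix $x_0$. Choose measurable sets $A$ and $B$ with $x_0\in\mathcal{D}_\gamma(A)\cap\mathcal{D}_\gamma(B)$ such that $f(x)\to f(x_0)$ as $x\to x_0$ along $A$, and $g(x)\to g(x_0)$ as $x\to x_0$ along $B$. Put $C:=A\cap B$, which is measurable. By Theorem \ref{thm2}(4),
$$x_0\in\mathcal{D}_\gamma(A)\cap\mathcal{D}_\gamma(B)=\mathcal{D}_\gamma(A\cap B)=\mathcal{D}_\gamma(C).$$
This holds for every $\gamma\in\mathcal{M}$, with no need for Condition (A), since that part of the theorem only used monotonicity and subadditivity of $\gamma$. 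Because $C\subset A$ and $C\subset B$, both restricted limits persist along $C$, and the sum rule for limits gives
$$\lim_{x\to x_0,\,x\in C}(f+g)(x)=f(x_0)+g(x_0).$$

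One subtlety needs checking: the limit along $C$ must be meaningful, that is, $x_0$ must be an accumulation point of $C\setminus\{x_0\}$. This follows from $x_0\in\mathcal{D}_\gamma(C)$. By Theorem \ref{thm6}, $x_0$ is a Lebesgue density point of $C$, so $|C\cap(x_0-\alpha,x_0+\alpha)|>0$ for all small $\alpha$. Hence every punctured neighbourhood of $x_0$ meets $C$.

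I expect the main obstacle to be this sum step, specifically ensuring that intersecting the two witnessing sets preserves the $\gamma$-density property. Theorem \ref{thm2}(4) handles it, so the remaining work is routine limit algebra.
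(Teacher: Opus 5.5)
Your proof is correct and follows the paper's argument essentially step for step: the zero function via $A_{x_0}=\mathbb{R}$, scalar multiples using the same witnessing set, and sums via $A_{x_0}\cap B_{x_0}$ together with the intersection property $\mathcal{D}_\gamma(A)\cap\mathcal{D}_\gamma(B)=\mathcal{D}_\gamma(A\cap B)$ of Theorem~\ref{thm2}. Your extra check that $x_0$ is an accumulation point of $C$ is a harmless refinement that the paper omits, and it also follows directly without Theorem~\ref{thm6}: if $x_0\in\mathcal{D}_\gamma(C)$, then $|(x_0-\alpha,x_0+\alpha)\cap C^c|<2\alpha$ for all small $\alpha$, since otherwise the defining ratio would equal $1$.
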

\begin{proof}
We must show that $\mathcal{AC}_\gamma$ is closed under addition and scalar multiplication, and that it contains the zero function.

Let $f,g\in \mathcal{AC}_\gamma$, and fix $x_0\in \mathbb{R}$. Since $f$ is $\gamma$-approximately continuous at $x_0$, there exists a measurable set $A_{x_0}$ such that

$$
x_0\in \mathcal{D}_\gamma(A_{x_0})
\quad \text{and} \quad
\lim_{\substack{x\to x_0\\ x\in A_{x_0}}} f(x)=f(x_0).
$$
Similarly, since $g$ is $\gamma$-approximately continuous at $x_0$, there exists a measurable set $B_{x_0}$ such that
$$
x_0\in \mathcal{D}_\gamma(B_{x_0})
\quad \text{and} \quad
\lim_{\substack{x\to x_0\\ x\in B_{x_0}}} g(x)=g(x_0).
$$
Set $ E_{x_0}:=A_{x_0}\cap B_{x_0}. $ Then $E_{x_0}$ is measurable, and by the intersection property of $\mathcal{D}_\gamma$,
$$ x_0\in \mathcal{D}_\gamma(A_{x_0})\cap \mathcal{D}_\gamma(B_{x_0})=\mathcal{D}_\gamma(E_{x_0}). $$
For every $x\in E_{x_0}$, both $x\in A_{x_0}$ and $x\in B_{x_0}$. Hence, as $x\to x_0$ through $E_{x_0}$,
$$f(x)\to f(x_0) \quad \text{and} \quad g(x)\to g(x_0). $$
Therefore,
$$ (f+g)(x)=f(x)+g(x)\to f(x_0)+g(x_0)=(f+g)(x_0) \quad \text{as } x\to x_0,\ x\in E_{x_0}. $$
Since $x_0\in \mathcal{D}_\gamma(E_{x_0})$, this shows that $f+g$ is $\gamma$-approximately continuous at $x_0$. Because $x_0$ was arbitrary, $f+g\in \mathcal{AC}_\gamma$.

Now let $c\in \mathbb{R}$ and $f\in \mathcal{AC}_\gamma$. Fix $x_0\in \mathbb{R}$, and choose a measurable set $A_{x_0}$ such that $$ x_0\in \mathcal{D}_\gamma(A_{x_0})
\quad \text{and} \quad
\lim_{\substack{x\to x_0\\ x\in A_{x_0}}} f(x)=f(x_0).$$
Then, as $x\to x_0$ through $A_{x_0}$,
$(cf)(x)=cf(x)\to cf(x_0)=(cf)(x_0).$

Thus $cf$ is $\gamma$-approximately continuous at $x_0$. Since $x_0$ was arbitrary, $cf\in \mathcal{AC}_\gamma$. So $\mathcal{AC}_\gamma$ is closed under scalar multiplication.

Finally, consider the zero function $0:\mathbb{R}\to\mathbb{R}$, defined by $0(x)=0$ for all $x\in \mathbb{R}$. Take $A_{x_0}=\mathbb{R}$. Then $x_0\in \mathcal{D}_\gamma(\mathbb{R})$, and clearly

$$
\lim_{\substack{x\to x_0\\ x\in \mathbb{R}}} 0(x)=0=0(x_0).
$$
Hence the zero function belongs to $\mathcal{AC}_\gamma$.

Therefore $\mathcal{AC}_\gamma$ contains the zero vector and is closed under addition and scalar multiplication. Since all remaining vector-space axioms are inherited from the space of all real-valued functions on $\mathbb{R}$ with pointwise operations, $\mathcal{AC}_\gamma$ is a vector space over $\mathbb{R}$.
\end{proof}

\begin{definition}
Let $X\subset \mathbb{R}$ and let $\gamma\in \mathcal{M}$ be a modulus function. A function $ f:X\to \mathbb{R}$

is called bounded $\gamma$-approximately continuous on $X$ if

1. $f$ is $\gamma$-approximately continuous at every point $x_0\in X$, meaning that for each $x_0\in X$ there exists a measurable set $A_{x_0}\subset X$ such that
$$ x_0\in \mathcal{D}_\gamma(A_{x_0}) \quad \text{and} \quad
f(x_0)=\lim_{\substack{x\to x_0,\ x\in A_{x_0}}} f(x), $$
where $\mathcal{D}_\gamma(A_{x_0})$ denotes the set of $\gamma$-density points of $A_{x_0}$,

2. $f$ is bounded on $X$, that is, there exists $M>0$ such that
$$ |f(x)|\leq M \qquad \text{for all } x\in X. $$
Equivalently, one may write

$$ \mathcal{BAC}_\gamma(X)=\{f:X\to \mathbb{R}: f \text{ is bounded and } \gamma\text{-approximately continuous on } X\}.$$

If $X=[a,b]$, then this is the class you use when working with the sup norm
$$ \|f\|_\infty=\sup_{x\in [a,b]} |f(x)|. $$

\end{definition}
\begin{theorem}
Let $X\subset \mathbb{R}$, and let
$$ \mathcal{BAC}_\gamma(X)=\{f:X\to \mathbb{R}: f \text{ is bounded and } \gamma\text{-approximately continuous on } X\}.$$

Then $(\mathcal{BAC}_\gamma(X), \|\cdot\|_\infty)$ is a Banach space, where
$$ \|f\|_\infty=\sup_{x\in X} |f(x)|. $$
\end{theorem}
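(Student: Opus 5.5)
The approach is the standard one: show that $\mathcal{BAC}_\gamma(X)$ is a linear subspace of the Banach space $\ell^\infty(X)$ of all bounded real functions on $X$ with the supremum norm, and then show it is closed there. Linearity goes as in the preceding theorem for $\mathcal{AC}_\gamma$. For $f,g\in\mathcal{BAC}_\gamma(X)$ and a point $x_0$, I take the intersection $A_{x_0}\cap B_{x_0}$ of the two witnessing sets; it is still a subset of $X$, and Theorem~\ref{thm2}(4) gives $x_0\in\mathcal{D}_\gamma(A_{x_0}\cap B_{x_0})$. Boundedness is clearly preserved under sums and scalar multiples. So the only real content is completeness, which I reduce to closedness in $\ell^\infty(X)$.

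Let $(f_n)$ be Cauchy in $\|\cdot\|_\infty$. Then $f_n\to f$ uniformly for some bounded $f$, and I must show $f$ is $\gamma$-approximately continuous at each $x_0\in X$. The obstacle is that each $f_n$ comes with its own set $A^n_{x_0}$, and no single one of these sets works for $f$. Intersecting all of them fails: countable intersections need not keep $x_0$ as a $\gamma$-density point, since Theorem~\ref{thm2}(4) only handles finite intersections.

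The plan is a diagonal gluing construction, as in the classical proof that approximately continuous functions are closed under uniform limits. First pass to a subsequence with $\|f_n-f\|_\infty<2^{-n}$. For each $n$ pick $A_n:=A^n_{x_0}$ and $r_n>0$ such that $|f_n(x)-f_n(x_0)|<2^{-n}$ for $x\in A_n\cap(x_0-r_n,x_0+r_n)$. Since $x_0\in\mathcal{D}_\gamma(A_1\cap\cdots\cap A_n)$ by Theorem~\ref{thm2}(4), I may also shrink $r_n$, taking the $r_n$ decreasing to $0$, so that
\[
\frac{\gamma\bigl(|(x_0-\alpha,x_0+\alpha)\cap (A_1\cap\cdots\cap A_n)^c|\bigr)}{\gamma(2\alpha)}<2^{-n}\quad (0<\alpha<r_n).
\]
Then define
\[
A:=\{x_0\}\cup\bigcup_{n}\bigl(A_1\cap\cdots\cap A_n\cap\{x: r_{n+1}\le |x-x_0|<r_n\}\bigr)\subset X,
\]
which is measurable.

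Two checks remain. For the limit: if $x\in A$ with $r_{n+1}\le|x-x_0|<r_n$, then $x\in A_n$, so
\[
|f(x)-f(x_0)|\le 2\|f-f_n\|_\infty+|f_n(x)-f_n(x_0)|<3\cdot 2^{-n},
\]
and this tends to $0$ as $x\to x_0$ within $A$. For the density: take $r_{k+1}\le\alpha<r_k$. Then $(x_0-\alpha,x_0+\alpha)\cap A^c$ is contained in $(x_0-\alpha,x_0+\alpha)\cap(A_1\cap\cdots\cap A_k)^c$ together with the pieces $\{r_{m+1}\le|x-x_0|<r_{m}\}\cap (A_1\cap\cdots\cap A_m)^c$ for $m>k$, up to the single point $x_0$. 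This is the delicate step. The plain union bound with subadditivity of $\gamma$ gives $\sum_{m\ge k}\gamma(\cdot)$, and the terms are normalized by $\gamma(2r_m)$ rather than $\gamma(2\alpha)$, which can be much larger. So the first thing I would try is to handle the measures before applying $\gamma$. Because the sets $A_1\cap\cdots\cap A_m$ decrease in $m$, the complement of $A$ near $x_0$ satisfies $(x_0-\alpha,x_0+\alpha)\cap A^c\subset (x_0-\alpha,x_0+\alpha)\cap(A_1\cap\cdots\cap A_{k})^c$, since every annulus piece at level $m\ge k$ lies in $A_1\cap\cdots\cap A_m\subset A_1\cap\cdots\cap A_k$. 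Monotonicity of $\gamma$ then bounds the ratio by $2^{-k}\to 0$, so $x_0\in\mathcal{D}_\gamma(A)$. I expect this inclusion to be the crux, and I would verify it carefully, including the boundary case $\alpha$ near $r_k$, using Proposition~\ref{pn1.11} if a sequential version is more convenient. Since all pieces lie in $X$, $A\subset X$, which finishes closedness and hence completeness.
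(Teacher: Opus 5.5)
Your strategy is right, and your diagnosis is sharper than the paper's proof. The paper fixes $\varepsilon$, picks $n$ with $\|f_n-f\|_\infty<\varepsilon/3$, and then uses the witness set of that particular $f_n$. That set depends on $\varepsilon$, so the paper never produces a single $A_{x_0}$ along which $f\to f(x_0)$.

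However, the step you flag as the crux is false. Write $B_m:=A_1\cap\cdots\cap A_m$, $I_\alpha:=(x_0-\alpha,x_0+\alpha)$ and $S_m:=\{x:r_{m+1}\le|x-x_0|<r_m\}$. Your reason is that each piece $B_m\cap S_m$ of $A$, $m\ge k$, lies in $B_m\subset B_k$. That gives $A\cap I_\alpha\subset B_k\cup\{x_0\}$, i.e.\ $I_\alpha\cap B_k^c\subset I_\alpha\cap A^c$. This is the reverse inclusion, and it bounds $|I_\alpha\cap A^c|$ only from below. The inclusion you need fails whenever $(B_k\setminus B_m)\cap S_m$ is nonempty for some $m>k$: such a point lies in $A^c$ but not in $B_k^c$. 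Indeed, for $k=1$ (all $r_2\le\alpha<r_1$) your inclusion would force $B_m\cap S_m=A_1\cap S_m$ for every $m$. Then $A$ and $A_1$ agree on $I_{r_1}\setminus\{x_0\}$, so the witness set of $f_1$ alone would already work for $f$. That is exactly what you rightly said cannot be expected.

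The repair is the union bound you discarded; the normalization actually works in your favour. For $r_{k+1}\le\alpha<r_k$, since $x_0\in A$ and the $S_m$ ($m\ge k$) partition $I_{r_k}\setminus\{x_0\}$,
\[
I_\alpha\cap A^c\subset\bigl(I_\alpha\cap B_k^c\bigr)\cup\bigcup_{m\ge k+1}\bigl(S_m\cap B_m^c\bigr),
\]
where the $S_m$, $m\ge k+1$, are pairwise disjoint subsets of $I_\alpha$.

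\begin{itemize}
\item Because $\alpha<r_k$, you have $\gamma(|I_\alpha\cap B_k^c|)<2^{-k}\gamma(2\alpha)$.
\item For $m\ge k+1$ one has $r_m\le r_{k+1}\le\alpha$. Passing to the limit $\alpha'\uparrow r_m$ in your estimate (using continuity of $\gamma$) gives $\gamma(|S_m\cap B_m^c|)\le\gamma(|I_{r_m}\cap B_m^c|)\le 2^{-m}\gamma(2r_m)\le 2^{-m}\gamma(2\alpha)$.
\end{itemize}

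Only the level-$k$ piece has radius exceeding $\alpha$, and it is estimated at radius $\alpha$ directly. Since $\gamma$ is continuous (shown in the proof that $\mathcal{D}_\gamma(A)$ is $F_{\sigma\delta}$), subadditivity extends to convergent series. This gives $\gamma(|I_\alpha\cap A^c|)\le 2^{1-k}\gamma(2\alpha)$, with $k\to\infty$ as $\alpha\to0^+$, and the rest of your argument then goes through.

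One further point, shared with the paper: a linear subspace must contain $0$. That requires every $x_0\in X$ to be a $\gamma$-density point of some measurable subset of $X$. This fails, for example, for $X=[a,b]$ at $x_0=a$: for every $A\subset X$ one has $|(a-\alpha,a+\alpha)\cap A^c|\ge\alpha$ and $\gamma(\alpha)\ge\gamma(2\alpha)/2$, so the class is empty there.
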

\begin{proof}
From the previous result, the class of all $\gamma$-approximately continuous functions is a vector space. Since boundedness is preserved under addition and scalar multiplication, $\mathcal{BAC}_\gamma(X)$ is also a vector space.
Now let $(f_n)$ be a Cauchy sequence in $(\mathcal{BAC}_\gamma(X), \|\cdot\|_\infty)$. Since the space $B(X)$ of all bounded real-valued functions on $X$, equipped with the sup norm, is Banach, there exists a bounded function $f:X\to \mathbb{R}$ such that
$$\|f_n-f\|_\infty \to 0.$$
It remains to show that $f$ is $\gamma$-approximately continuous at every point $x_0\in X$.
Fix $x_0\in X$ and let $\varepsilon>0$. Choose $n$ so large that $$ \|f_n-f\|_\infty < \frac{\varepsilon}{3}. $$
Because $f_n$ is $\gamma$-approximately continuous at $x_0$, there exists a measurable set $A_{x_0}\subset X$ such that
$$x_0\in \mathcal{D}_\gamma(A_{x_0})$$
and
$$f_n(x_0)=\lim_{\substack{x\to x_0,\ x\in A_{x_0}}} f_n(x).$$
Hence there exists $\delta>0$ such that whenever $x\in A_{x_0}$ and $|x-x_0|<\delta$, we have
$$ |f_n(x)-f_n(x_0)|<\frac{\varepsilon}{3}.$$
For such $x$,
$$ |f(x)-f(x_0)| \leq |f(x)-f_n(x)|+|f_n(x)-f_n(x_0)|+|f_n(x_0)-f(x_0)| <
\frac{\varepsilon}{3}+\frac{\varepsilon}{3}+\frac{\varepsilon}{3}=\varepsilon.$$
Therefore
$$f(x_0)=\lim_{\substack{x\to x_0,\ x\in A_{x_0}}} f(x).$$
Since also $x_0\in \mathcal{D}_\gamma(A_{x_0})$, this shows that $f$ is $\gamma$-approximately continuous at $x_0$.
Because $x_0$ was arbitrary, $f\in \mathcal{BAC}_\gamma(X)$. Thus every Cauchy sequence in $\mathcal{BAC}_\gamma(X)$ converges in the sup norm to an element of $\mathcal{BAC}_\gamma(X)$. So $(\mathcal{BAC}_\gamma(X), \|\cdot\|_\infty)$ is a Banach space. 

What fails for the full space is boundedness. In fact, even on $X=[0,1]$, one can build an unbounded approximately continuous function by putting very tall continuous triangular bumps on pairwise disjoint tiny intervals accumulating at $0$, with total relative length tending to $0$ near $0$. Then the function is approximately continuous at $0$ along the complement of those intervals, but it is unbounded on $[0,1]$. If $\gamma$ satisfies Condition (A), then $\mathcal{D}_\gamma(A)=\Phi(A)$, so the same example is also $\gamma$-approximately continuous.
\end{proof}
\begin{proposition}
Assume that $\gamma\in \mathcal{M}$ satisfies Condition (A). Let $$ \mathcal{AC}_\gamma([0,1])=\{f:[0,1]\to \mathbb{R}: f \text{ is } \gamma\text{-approximately continuous on } [0,1]\}.
$$
Then $\mathcal{AC}_\gamma([0,1])$ is not a Banach space under the sup norm. In fact, it is not even a normed space under $\|\cdot\|_\infty$.
\end{proposition}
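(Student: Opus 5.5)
The claim is that $\|\cdot\|_\infty$ is not a norm on $\mathcal{AC}_\gamma([0,1])$. Since $\|f\|_\infty=\sup_{x\in[0,1]}|f(x)|$ must be a finite real number for every element of a normed space, it suffices to exhibit one unbounded $f\in\mathcal{AC}_\gamma([0,1])$. Failure of the Banach property then follows automatically, because a Banach space is in particular a normed space. By Theorem \ref{m3}, Condition (A) gives $\mathcal{D}_\gamma(A)=\Phi(A)$ for every $A\in\mathcal{L}$. Hence $\gamma$-approximate continuity on $[0,1]$ coincides with classical approximate continuity, and the construction can be carried out entirely with Lebesgue density.

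\textbf{Construction.} Put $t_n=2^{-n}$. Inside $(t_{n+1},t_n)$ choose a closed interval $J_n$ of length $\ell_n=t_n^2/8$, say centred at the midpoint of $(t_{n+1},t_n)$. These intervals are pairwise disjoint, stay away from the endpoints $t_n$, and accumulate only at $0$. On $J_n$ let $f$ be a continuous tent of height $n$ vanishing at the endpoints of $J_n$, and set $f=0$ elsewhere on $[0,1]$. Then $f$ is unbounded, and $f$ is continuous at every point of $(0,1]$, since near any such point only finitely many $J_n$ are present. At each $x_0\in(0,1]$ we take $A_{x_0}=[0,1]$. Because $x_0$ is interior or a boundary point of $[0,1]$, one should check whether the one-sided density used on the domain $X=[0,1]$ is acceptable. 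For $X\subset\mathbb{R}$ I would instead take $A_{x_0}=\mathbb{R}$, extending $f$ by $0$ outside $[0,1]$ and reading the limit along points of the domain. The point $x_0=1$ is harmless because $f$ vanishes near $1$.

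\textbf{The point $0$.} This is the only nontrivial step. Take $A_0=[0,1]\setminus\bigcup_n J_n$, or $\mathbb{R}\setminus\bigcup_n J_n$ if two-sided intervals are used. Along $A_0$ we have $f\equiv 0=f(0)$, so the limit condition holds trivially. It remains to verify $0\in\Phi(A_0)$. For $t_{k+1}<h\le t_k$, the argument of Example \ref{ex1.7} gives
\[
\bigl|(-h,h)\cap A_0^c\bigr|\le\sum_{n\ge k}\ell_n\le t_k^2 .
\]
Since $t_k\le 2h$, this is $O(h^2)$, which is $o(2h)$. So $0$ is a Lebesgue density point of $A_0$, and by Theorem \ref{thm3} it is a $\gamma$-density point. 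Therefore $f\in\mathcal{AC}_\gamma([0,1])$ while $\sup|f|=\infty$.

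The main obstacle is not analytic but definitional: interpreting density points of subsets of $[0,1]$ at the endpoint $0$, where the symmetric interval $(-h,h)$ leaves the domain. If the definition is read with $A_{x_0}\subset X$, then $(-h,0)$ lies in $A_0^c$ and the density at $0$ is at most $1/2$. I would handle this in one of two ways. The first is to center the bumps at an interior point instead: place the $J_n$ on both sides of $x_0=1/2$ at distances $2^{-n}$, and argue exactly as above at $1/2$. This sidesteps the boundary issue completely, so it is the version I would write up.
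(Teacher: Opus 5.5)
Your core construction is the paper's: tents of height $n$ on tiny intervals inside $(2^{-n-1},2^{-n})$ of length of order $4^{-n}$, $f\equiv 0$ elsewhere, an $O(h^2)$ bound on the exceptional set near $0$, and then Condition (A) to pass from Lebesgue density to $\gamma$-density. Your boundary diagnosis is correct, and it catches a slip in the paper's own proof. The paper takes $A=[0,1]\setminus\bigcup_n I_n$ and asserts $\lambda((-h,h)\cap A^c)/(2h)\to 0$, which is false because $(-h,0)\subset A^c$. The version you give first is complete and correct: $A_0=\mathbb{R}\setminus\bigcup_n J_n$ at $0$, $A_{x_0}=\mathbb{R}$ at every $x_0\in(0,1]$, and limits taken through points of $[0,1]$. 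Your estimate $\sum_{n\ge k}\ell_n=4^{-k}/6\le t_k^2<4h^2$ bounds the ratio by $2h$.

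The gap is in the remedy you say you would actually write up. Moving the bumps to accumulate at $1/2$ does not sidestep anything. $f$ must still be $\gamma$-approximately continuous at the endpoints $0$ and $1$, and the obstruction there has nothing to do with where the bumps sit. Under the strict reading $A_{x_0}\subset[0,1]$, every measurable $A\subset[0,1]$ satisfies $|(-h,h)\cap A^c|\ge h$. Hence, by monotonicity and subadditivity,
\[
\frac{\gamma(|(-h,h)\cap A^c|)}{\gamma(2h)}\ \ge\ \frac{\gamma(h)}{\gamma(2h)}\ \ge\ \frac12 ,
\]
so $0\notin\mathcal{D}_\gamma(A)$, and likewise at $1$. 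Under that reading no function at all, not even the zero function, is $\gamma$-approximately continuous at $0$. So $\mathcal{AC}_\gamma([0,1])=\emptyset$, and your interior-bump $f$ fails at $0$ and $1$ just as the original one would. The proposition would then hold only for the degenerate reason that $\emptyset$ is not a vector space. Your remark that $x_0=1$ is harmless because $f$ vanishes near $1$ is likewise valid only under the permissive reading.

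The correct fix is the one you mention in passing. State explicitly that $A_{x_0}$ ranges over $\mathcal{L}$ and that the limit is taken through $A_{x_0}\cap[0,1]$ (equivalently, extend $f$ by $0$ outside $[0,1]$). Then write up your original construction at $0$; the interior variant buys nothing.
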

\begin{proof}
For each $n\in \mathbb{N}$, define
$$ a_n=2^{-(n+1)}, \qquad \ell_n=2^{-(2n+2)}, \qquad I_n=(a_n,a_n+\ell_n).$$
These intervals are pairwise disjoint and accumulate only at $0$.
Let $c_n=a_n+\ell_n/2$, and define a triangular bump $\varphi_n:[0,1]\to \mathbb{R}$ by  $$ \varphi_n(x)=
\begin{cases}
\frac{2n}{\ell_n}(x-a_n), & a_n\leq x\leq c_n,\\[4pt]
\frac{2n}{\ell_n}(a_n+\ell_n-x), & c_n\leq x\leq a_n+\ell_n,\\[4pt]
0, & x\notin I_n.
\end{cases}$$
Now define
$$f(x)=\sum_{n=1}^\infty \varphi_n(x), \qquad x\in [0,1]. $$
Since the supports $I_n$ are pairwise disjoint, at each point $x$ at most one summand is nonzero, so $f$ is well defined.
Also, $f(c_n)=n \qquad (n\in \mathbb{N}),$
hence $f$ is unbounded on $[0,1]$. Therefore
$$ \|f\|_\infty=\sup_{x\in [0,1]} |f(x)|=\infty.$$

It remains to show that $f$ is $\gamma$-approximately continuous on $[0,1]$.

For every $x\in (0,1]$, the function $f$ is actually continuous at $x$: away from $0$, only finitely many intervals $I_n$ lie nearby, and each bump $\varphi_n$ is continuous, with value $0$ at the endpoints of $I_n$. Thus $f$ is continuous at every $x\neq 0$, hence $\gamma$-approximately continuous there.
Now consider $x=0$. Put
$$A=[0,1]\setminus \bigcup_{n=1}^\infty I_n. $$
Then $f|_A\equiv 0$, so
$$\lim_{\substack{x\to 0\\ x\in A}} f(x)=0=f(0). $$
Thus it is enough to show that $0$ is a $\gamma$-density point of $A$.
First we show that $0$ is a usual density point of $A$. Let $h\in (2^{-(k+1)}, 2^{-k}]$. Then
$$ (0,h)\cap A^c \subset \bigcup_{n\geq k} I_n, $$
so $$
\lambda\bigl((0,h)\cap A^c\bigr)\leq \sum_{n\geq k}\ell_n
=\sum_{n\geq k}2^{-(2n+2)}
\leq C\,4^{-k} $$

for some constant $C>0$. Since $h>2^{-(k+1)}$, we have $4^{-k}\leq C'h^2$, hence
$$ \lambda\bigl((0,h)\cap A^c\bigr)\leq C''h^2. $$
Therefore
$$\frac{\lambda\bigl((0,h)\cap A^c\bigr)}{h}\to 0
\qquad (h\to 0^+), $$
which implies
$$ \frac{\lambda\bigl((-h,h)\cap A^c\bigr)}{2h}\to 0. $$
So $0$ is a Lebesgue density point of $A$.
Because $\gamma$ satisfies Condition (A), Theorem 1.10 gives

$$\mathcal{D}_\gamma(A)=\Phi(A),$$
where $\Phi(A)$ denotes the set of ordinary density points of $A$. Hence $0\in \mathcal{D}_\gamma(A)$.

Thus $f$ is $\gamma$-approximately continuous at $0$, and consequently $f\in \mathcal{AC}_\gamma([0,1])$.
We have therefore found a $\gamma$-approximately continuous function on $[0,1]$ whose sup norm is infinite. So $\|\cdot\|_\infty$ is not finite on all of $\mathcal{AC}_\gamma([0,1])$. Hence $\mathcal{AC}_\gamma([0,1])$ is not a normed space under the sup norm, and therefore it cannot be a Banach space. 
\end{proof}
\begin{proposition}
Let $\gamma \in M$, let $f:\mathbb{R}\to\mathbb{R}$ be $\gamma$-approximately continuous at a point
$x_{0}\in\mathbb{R}$, and let $\varphi:\mathbb{R}\to\mathbb{R}$ be continuous at $f(x_{0})$.
Then the composition $\varphi\circ f$ is $\gamma$-approximately continuous at $x_{0}$.

Consequently, if $f\in AC_{\gamma}$ and $\varphi:\mathbb{R}\to\mathbb{R}$ is continuous, then
$\varphi\circ f\in AC_{\gamma}$.
\end{proposition}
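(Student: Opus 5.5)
The plan is to reuse, without change, the measurable set that witnesses the $\gamma$-approximate continuity of $f$ at $x_0$. Since the definition of $\gamma$-approximate continuity asks only for a set $A_{x_0}$ with $x_0\in\mathcal{D}_\gamma(A_{x_0})$ and a restricted limit along that set, the composition inherits the same set. No property of $\gamma$ beyond the definition is needed, so the argument is uniform in $\gamma\in\mathcal{M}$.

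Concretely, I would fix a Lebesgue-measurable set $A_{x_0}$ with
\[
x_0\in\mathcal{D}_\gamma(A_{x_0})
\quad\text{and}\quad
\lim_{\substack{x\to x_0\\ x\in A_{x_0}}} f(x)=f(x_0).
\]
Then I show that $\lim_{x\to x_0,\,x\in A_{x_0}}\varphi(f(x))=\varphi(f(x_0))$ by an $\varepsilon$--$\delta$ argument.

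\begin{enumerate}
\item Given $\varepsilon>0$, continuity of $\varphi$ at $f(x_0)$ gives $\eta>0$ such that $|y-f(x_0)|<\eta$ implies $|\varphi(y)-\varphi(f(x_0))|<\varepsilon$.
\item The restricted limit of $f$ gives $\delta>0$ such that $x\in A_{x_0}$ and $|x-x_0|<\delta$ imply $|f(x)-f(x_0)|<\eta$.
\item Chaining the two, for such $x$ we get $|\varphi(f(x))-\varphi(f(x_0))|<\varepsilon$.
\end{enumerate}

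Because the same set $A_{x_0}$ satisfies $x_0\in\mathcal{D}_\gamma(A_{x_0})$, this is exactly the definition of $\varphi\circ f$ being $\gamma$-approximately continuous at $x_0$.

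For the ``consequently'' part, take $f\in\mathcal{AC}_\gamma$ and $\varphi$ continuous on $\mathbb{R}$. Then $\varphi$ is continuous at $f(x_0)$ for every $x_0$, so the pointwise statement applies at every $x_0\in\mathbb{R}$, which gives $\varphi\circ f\in\mathcal{AC}_\gamma$.

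The only point needing care is how the restricted limit is read when $x=x_0$ lies in $A_{x_0}$. If the limit is taken over $x\neq x_0$, the conclusion is unaffected. If $x=x_0$ is allowed, the inequality holds trivially there. In either reading the argument goes through.
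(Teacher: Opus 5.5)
Your proposal is correct and follows the paper's proof: both reuse the witness set $A_{x_0}$ for $f$, use continuity of $\varphi$ at $f(x_0)$ to pass the restricted limit through $\varphi$, and then apply the pointwise statement at every $x_0$ for the global claim. Your explicit $\varepsilon$--$\delta$ chaining just spells out the step the paper writes as $\lim_{x\to x_0,\,x\in A_{x_0}}\varphi(f(x))=\varphi\bigl(\lim_{x\to x_0,\,x\in A_{x_0}} f(x)\bigr)$.
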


\begin{proof}
Since $f$ is $\gamma$-approximately continuous at $x_{0}$, there exists a Lebesgue-measurable set
$A_{x_{0}}\subseteq \mathbb{R}$ such that
$$
x_{0}\in D_{\gamma}(A_{x_{0}})
\quad\text{and}\quad
\lim_{\substack{x\to x_{0}\\ x\in A_{x_{0}}}} f(x)=f(x_{0}).
$$
Because $\varphi$ is continuous at $f(x_{0})$, we obtain
$$
\lim_{\substack{x\to x_{0}\\ x\in A_{x_{0}}}} \varphi(f(x))
=
\varphi\!\left(\lim_{\substack{x\to x_{0}\\ x\in A_{x_{0}}}} f(x)\right)
=
\varphi(f(x_{0})).
$$
Hence
$$
\lim_{\substack{x\to x_{0}\\ x\in A_{x_{0}}}} (\varphi\circ f)(x)
=
(\varphi\circ f)(x_{0}).
$$
Since $x_{0}\in D_{\gamma}(A_{x_{0}})$, it follows from Definition 3.1 that $\varphi\circ f$ is
$\gamma$-approximately continuous at $x_{0}$.

For the consequence, assume that $f\in AC_{\gamma}$ and that $\varphi:\mathbb{R}\to\mathbb{R}$ is
continuous. Then $f$ is $\gamma$-approximately continuous at every point $x_{0}\in\mathbb{R}$, and
$\varphi$ is continuous at every point $f(x_{0})$. By the first part, $\varphi\circ f$ is
$\gamma$-approximately continuous at every $x_{0}\in\mathbb{R}$. Therefore,
$\varphi\circ f\in AC_{\gamma}$.
\end{proof}

\section{Conclusion}
\label{Sec:3}

In this paper, we introduced the notion of a $\gamma$-density point for Lebesgue-measurable subsets of $\mathbb{R}$, where $\gamma$ is a modulus function, and developed its basic theory. This notion yields a natural extension of the classical concept of density point. We proved that every $\gamma$-density point is a Lebesgue density point, and that, under Condition~(A), the two notions coincide. As a consequence, for modulus functions satisfying Condition~(A), we obtained a modulus version of the Lebesgue Density Theorem.

We then defined the associated $\gamma$-density topology $\tau_\gamma$ and studied its main properties. In general, $\tau_\gamma$ is contained in the classical Lebesgue density topology $\tau_d$, while under Condition~(A) one has $\tau_\gamma=\tau_d$. We also compared $\tau_\gamma$ with $\psi$-density topologies and showed, under suitable assumptions, that $\tau_\gamma$ is finer than the corresponding $\psi$-density topology. In addition, we established several structural properties of $\tau_\gamma$, including that every countable subset of $\mathbb{R}$ is $\tau_\gamma$-closed, that $(\mathbb{R},\tau_\gamma)$ is nonseparable, that every compact subset is finite, and that the space is neither regular nor metrizable.

Finally, we introduced $\gamma$-approximately continuous functions and showed that they extend the classical notion of approximate continuity. We proved that the class of all $\gamma$-approximately continuous functions forms a vector space, and that the bounded subclass is a Banach space with respect to the supremum norm. We also showed that, even under Condition~(A), the full space of $\gamma$-approximately continuous functions on $[0,1]$ is not a normed space under $\|\cdot\|_\infty$. These results indicate that modulus density provides a flexible framework for extending classical density and continuity notions, and suggests further directions for studying density-type topologies and related function spaces.

\section*{Acknowledgment}
The authors would like to express their sincere gratitude to Prof. Dr. Jacek Hejduk for his insightful comments and valuable suggestions that have significantly improved this manuscript. The corresponding author further acknowledges the generous support of the Türkiye Scholarships Program (YTB), which funded her doctoral studies and thereby contributed substantially to the completion of this research.


\begin{thebibliography}{99}
\bibitem{AP}
Aizpuru, A., Listán-García, M.C., Rambla-Barreno, F.: Density by moduli and statistical convergence. \textit{Quaestiones Mathematicae} \textbf{37}(4), 525--530 (2014). \url{https://doi.org/10.2989/16073606.2014.981683}

\bibitem{fmmto}
Filipczak M., Terepeta M., On continuity concerned with $\psi$-density topologies. Tatra Mt. Math. Publ. 34(2), 2006: 29--36.

\bibitem{gawc}
Goździewicz-Smejda A., Łazarow E., Comparison of $\psi$-Sparse Topologies. Scientific Issues, Mathematics XIV, Jan Dlugosz University in Czestochowa, Czestochowa 2009: 21--36.

\bibitem{cgcntnd}
Goffman C., Neugebauer C. J. and Nishiura T., Density topology and approximate continuity, Duke Math. J. 28(4) 1961, 497--505. \url{https://doi.org/10.1215/S0012-7094-61-02847-2}

\bibitem{hp1}
O. Haupt, C./Ch. Pauc, La topologie approximative de Denjoy envisagée comme vraie topologie, C. R. Acad. Sci. Paris, 234 (1952), 390--392.

\bibitem{hp2}
Haupt O. and Pauc C. Y., Über die durch allgemeine Ableitungsbasen bestimmten Topologien. Ann. Mat. Pura Appl. Ser. IV 36, 1954, 247--271. \url{https://doi.org/10.1007/BF02412841}

\bibitem{hjo}
Hejduk J., On the Density Topologies Generated by Functions. Tatra. Mt. Math. Publ 40(2), 2008, 133--141.

\bibitem{hma}
Hejduk J. and K\"u\c{c}\"ukaslan M., Loranty A., On $<s>$-generalized topologies. Georgian Mathematical Journal, 2024, 31 (3), 437--443. \url{https://doi.org/10.1515/gmj-2023-2096}

\bibitem{ha}
Hejduk J. and Loranty A., On Some Generalized Topologies Satisfying all Separation Axioms. Results in Mathematics, (2024), 79:38. \url{https://doi.org/10.1007/s00025-023-02073-4}

\bibitem{hp}
Hejduk J. and  Nowakowski P., On strong generalized topology generated by the porosity.
Topology and its Applications, 362, (2025). \url{https://doi.org/10.1016/j.topol.2025.109223}

\bibitem{hwro}
Hejduk, J., and Wiertelak, R. On the abstract density topologies generated by lower and almost lower density operators, Traditional and present-day topics in real analysis. Dedicated to Professor Jan Stanisław Lipiński, Filipczak M., Wagner-Bojakowska E. (eds.), Łódź University Press, Łódź, (2013), pp. 431--447. \url{https://doi.org/10.18778/7525-971-1.25}

\bibitem{teavpsi} 
Lazarow E. and Vizváry A., $\psi_{\mathcal{I}}$-Density Topology. Scientific Issues, Mathematics XV, Jan Dlugosz University in Czestochowa, Czestochowa, 2010: 67--80.

\bibitem{LFMC}
León-Saavedra, F., Listán-García, M. del C., Pérez Fernández, F. J., and Romero de la Rosa, M. P. On statistical convergence and strong Cesàro convergence by moduli, Journal of Inequalities and Applications, 2019(1), 298. \url{https://doi.org/10.1186/s13660-019-2252-y}

\bibitem{jcom} 
Oxtoby J. C., Measure and category, Springer Verlag, New YorkHeidelberg-Berlin, 1971. \url{https://doi.org/10.1007/978-1-4615-9964-7}

\bibitem{sjto} 
Taylor S. J., On strengthening the Lebesgue Density Theorem, Fund.Math. 46, 1959, 305--315. \url{https://doi.org/10.4064/fm-46-3-305-315}

\bibitem{tma} 
Terepeta M., A New Approach to $\psi$-Continuity. Tatra Mt. Math. Publ 42(1), 2009, 107--117. \url{https://doi.org/10.2478/v10127-009-0011-z}

\bibitem{mepsi} 
Terepeta M. and Wagner-Bojakowska E., $\psi$-density topology, Circ. Mat. Palermo, Serie $II$, Tomo $XLVIII$, 1999, 451--476. \url{https://doi.org/10.1007/BF02844336}

\bibitem{wewc} 
Wagner-Bojakowska E. and Wilczyński W., Comparison of $\psi$-Density Topologies. Real Analysis Exchange, 25(2), 1999/2000, 661--672.

\bibitem{wewt} 
Wagner-Bojakowska E., and Wilczyński W., The interior operation in $\psi$-density topology. Rendiconti del Circolo Matematico di Palermo, Series II, 49, no. 1, 2000: 5--26. \url{https://doi.org/10.1007/BF02904217}

\bibitem{wh}
Wilczynski W., Density topologies, Chapter 15 in Handbook of Measure Theory, Institute of Mathematics, University of Novi Sad, Elsevier, 2002, 675--702. \url{https://doi.org/10.1016/B978-044450263-6/50016-6}


\end{thebibliography}
\end{document}